\newcommand{\ta}{\theta}
\newcommand{\T}{\mathrm{T}}
\newtheorem{thm}{Theorem}[section]
\newtheorem{lemma}[thm]{Lemma}
\newtheorem{prop}[thm]{Proposition}
\newtheorem{claim}[thm]{Claim}
\newtheorem{cor}[thm]{Corollary}
\newtheorem{remark}[thm]{Remark}
\newcommand{\ZZ}{\mathbb{Z}}
\newcommand{\R}{\mathbb{R}}
\newcommand{\RR}{\mathbb{R}}
\newcommand{\C}{\mathfrak{C}}
\newcommand{\ord}{\mathrm{ord}}
\newcommand\fX{\mathfrak X}
\renewcommand{\div}{\mathrm{div}}
\renewcommand{\k}{\kappa}
\newcommand{\K}{\mathbb K}
\newcommand{\Spec}{\operatorname{Spec}}
\newcommand{\g}{\mathfrak g}
\newcommand{\an}{\operatorname{an}}
\newcommand{\ad}{\operatorname{ad}}
\newcommand{\onto}{\twoheadrightarrow}  
\newcommand{\into}{\hookrightarrow}
\renewcommand{\H}{\mathcal H}
\newcommand{\B}{\mathbb B}
  \newenvironment{dedication}
        {\vspace{.2ex}\begin{quotation}\begin{center}\begin{em}}
        {\par\end{em}\end{center}\end{quotation}}
\DeclareMathOperator\val{val}
\title{Equidistribution of Weierstrass points on curves over non-Archimedean fields}
\author{Omid Amini}
\address{CNRS - D\'epartement de math\'ematiques et applications, \'Ecole normale sup\'erieure, Paris}
\email{oamini@math.ens.fr}
\begin{document}
\maketitle

\begin{dedication}
D\'edi\'e \`a Jean Lannes
\end{dedication}
\begin{abstract}
 We prove equidistribution of Weierstrass points on Berkovich curves. 
 Let $X$ be a smooth proper curve of positive genus over a complete algebraically closed 
 non-Archimedean field $\K$ of equal characteristic zero with a non-trivial valuation. 
 Let $L$ be a line bundle of positive degree on $X$. The Weierstrass points of powers of 
 $L$ are equidistributed according to the Zhang-Arakelov measure on $X^{\an}$. This provides a non-Archimedean analogue of a theorem of Mumford and Neeman. 
 
 Along the way we provide a description of the reduction of Weierstrass points, answering a question of Eisenbud and Harris.
\end{abstract}

\section{Introduction}
 
A theorem of Mumford and Neeman~\cite{Mumford, Neeman} (see also~\cite{Olsen}) 
states that for a compact Riemann surface 
$S$ of positive genus, and for a line bundle 
$L$ of positive degree  on $S$, the discrete measures $\mu_n$ supported on
Weierstrass points of 
$L^{\otimes n}$ converge weakly to the Arakelov-Bergman measure on $S$ when $n$ goes to infinity.  
Our aim in this paper is the prove a non-Archimedean analogue of Mumford-Neeman theorem.

Let $\K$ be 
a complete algebraically closed valued 
non-Archimedean field with valuation ring $R$ and residue field $\k$. 
We suppose that the valuation of $\K$ is non-trivial. In addtion, we 
assume that both $\K$ and $\k$ are of 
characteristic zero.

 Let $X$ be a smooth proper curve of positive genus over $\K$. 
 The analogue of the Arakelov measure in non-Archimedean Arakelov theory 
 is the canonical admissible measure constructed by 
Zhang~\cite{Zhang}. Fixing a skeleton $\Gamma$ of $X^{\an}$, which is a finite metric graph, 
the canonical admissible measure $\mu_{\mathrm{ad}}$ is a measure of total mass one with support in $\Gamma$. 
The skeleton $\Gamma$ comes with an inclusion map $\iota: \Gamma \hookrightarrow X^{\an}$, 
which allows to pushforward  
$\mu_{\ad}$ to $X^{\an}$. By the explicit form of $\mu_{\ad}$, and since $\mu_{\ad}$ does not have support on bridge edges,
it is easy to see that 
the resulting measure $\iota_*(\mu_{\ad})$ on $X^{\an}$ does not depend on the choice of the 
skeleton  $\Gamma$. This leads to a well-defined measure $\mu_{\ad}$ on the 
Berkovich analytification $X^{\an}$ of $X$, supported on the minimal skeleton of $X^{\an}$. We will review the definition in Section~\ref{sec:zhang}.

Let now $L$ be a line bundle of positive degree on $X$, and denote by 
$\mathcal W_n =\sum_{x\in X(\K)} w_{n,x} (x)$ the 
Weierstrass divisor of $L^{\otimes n}$, that we view as a divisor with support in 
$X(\K) \subset X^{\an}$. Consider the discrete measure 
$$\frac 1{\deg(\mathcal 
W_n)}\delta_{\mathcal W_n} = \frac 1{\deg(\mathcal 
W_n)} \sum_{x\in X(\K) \subset X^{\an}} w_{n,x} \delta_x$$
supported on a finite set of points of type I in $X^{\an}$. We have

\begin{thm}\label{thm:main2} Notations as above, the 
measures $\frac 1{\deg(\mathcal 
W_n)}\delta_{\mathcal W_n}$ converge  weakly to the canonical admissible measure
$\mu_{\mathrm{ad}}$ on $X^{\an}$. 
\end{thm}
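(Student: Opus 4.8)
The plan is to realize $\mathcal W_n$ as the divisor of a Wronskian and to transport the Mumford--Neeman strategy to the Berkovich curve via potential theory. Fix a semistable model of $X$, with minimal skeleton $\Gamma$ and retraction $\tau\colon X^{\an}\to\Gamma$, and write $d:=\deg L$ and $N:=h^0(L^{\otimes n})=nd+1-g$ for $n\gg 0$. The Wronskian $W_n$ of a basis of $H^0(L^{\otimes n})$ is a nonzero global section of $\mathcal M_n:=L^{\otimes nN}\otimes\omega_X^{\otimes\binom{N}{2}}$ whose divisor is exactly $\mathcal W_n$. Endowing $L$ with a continuous model metric and $\omega_X$, $\mathcal M_n$ with the induced metrics, the Poincar\'e--Lelong formula for Berkovich curves gives
\begin{equation*}
\delta_{\mathcal W_n}=c_1(\overline{\mathcal M}_n)+dd^c\log\|W_n\|,\qquad c_1(\overline{\mathcal M}_n)=nN\,c_1(\overline L)+\binom{N}{2}\,c_1(\overline{\omega_X}).
\end{equation*}
Since $\deg(\mathcal W_n)=nNd+\binom{N}{2}(2g-2)\sim g\,(nd)^2$, dividing by $\deg(\mathcal W_n)$ reduces the theorem to identifying the weak limits of the two terms on the right-hand side.

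For the fixed model metric the curvature term is immediate: $\tfrac{1}{\deg\mathcal W_n}c_1(\overline{\mathcal M}_n)$ converges to the mass-one measure $\nu_\infty:=\tfrac1g\cdot\tfrac1d\,c_1(\overline L)+\tfrac1{2g}\,c_1(\overline{\omega_X})$ supported on $\Gamma$. This limit is \emph{metric-dependent} and is not $\mu_{\ad}$; as in the complex case, where the Bergman-kernel expansion converts the curvature of the auxiliary metric into the metric-independent Arakelov measure, the discrepancy must be absorbed by the Wronskian term. So everything comes down to the asymptotics of $\psi_n:=\tfrac{1}{\deg\mathcal W_n}\log\|W_n\|$: I must show that $\psi_n$ converges, as a sequence of potentials, to some $\psi_\infty$ with $\nu_\infty+dd^c\psi_\infty=\mu_{\ad}$, since then integration by parts against smooth test functions ($\int f\,dd^c\psi_n=\int\psi_n\,dd^c f$) yields the weak convergence of $\mu_n$.

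The engine for computing this limit is a precise description of the reduction of Weierstrass points, which is the promised answer to the Eisenbud--Harris question. Reducing $H^0(L^{\otimes n})$ to the components $C_v$ of the special fiber produces linear series on the $C_v$, and the valuations of sections along the edges assemble into a tropical linear series on $\Gamma$; the reduction of a Weierstrass point is then governed either by the ramification of the reduced series at a smooth point of some $C_v$, contributing mass at the vertex $v$, or by the combinatorial ramification along an edge, contributing a density in the interior of that edge. Concretely, on $\Gamma$ the restriction of $dd^c\log\|W_n\|$ is the graph Laplacian of a \emph{tropical Wronskian} built from the slope data, and letting $n\to\infty$ its edge contributions converge to $\tfrac1g\cdot\tfrac{1}{\ell_e+r_e}\,dx$ on each edge $e$, where $r_e$ is the effective resistance of $\Gamma\setminus e^\circ$ between the endpoints of $e$, while the Weierstrass points reducing to a component $C_v$ of genus $g_v$ accumulate to the atom $\tfrac{g_v}{g}\delta_v$. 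These are precisely the edge densities and vertex masses of Zhang's measure, so the limit is $\mu_{\ad}$; the same control of the depth of Weierstrass points inside each residue disc shows that $\mu_n\big(\{x:\operatorname{dist}(x,\Gamma)\ge\rho\}\big)\to0$, so that writing an arbitrary continuous test function as $\tau^*(f|_\Gamma)$ plus a function vanishing on $\Gamma$ and supported in the residue discs upgrades the convergence to all of $X^{\an}$.

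The main obstacle is the asymptotic control of the metrized Wronskian $\psi_n$ --- the non-Archimedean counterpart of the Bergman-kernel asymptotics --- which demands a sufficiently uniform description of how $H^0(L^{\otimes n})$ reduces and how the slopes of its sections behave along $\Gamma$ as $n\to\infty$, so that the tropical Wronskian can be computed to quadratic order in $n$. Two points require particular care. First, the metric-independence of the limit: the $c_1(\overline L)$-dependence of $\nu_\infty$ must be cancelled \emph{exactly} by the Wronskian term, which is what forces the limit to be the canonical $\mu_{\ad}$ rather than an auxiliary curvature measure. Second, the emergence of the vertex mass $\tfrac{g_v}{g}$ and of the resistance weight $\ell_e+r_e$: a naive count of Weierstrass points on a component involves its degree $d_v$, so the slope/filtration contributions must conspire to leave only $g_v$ at each vertex, and the resistance weight along an edge should arise from the quadratic growth of the ramification of the tropical linear series there. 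Matching these with Zhang's formula is where the genuinely non-Archimedean combinatorics enters.
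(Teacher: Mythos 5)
Your global strategy --- Poincar\'e--Lelong applied to a metrized Wronskian, push everything to the skeleton, and match against Zhang's explicit formula --- is the right architecture, and in fact it is essentially a reformulation of the paper's own setup (the slope formula used throughout the paper \emph{is} the non-Archimedean Poincar\'e--Lelong formula). But the proposal has a genuine gap at exactly the point where all the work lies: the two limits you assert, namely that the edge contributions of your ``tropical Wronskian'' converge to $\frac 1g\frac{1}{\ell_e+\rho_e}\,d\ta$ and that the vertex masses converge to $\frac{g_v}{g}\delta_v$, are stated as what \emph{must} happen in order to match Zhang's measure, with no argument; the tropical Wronskian itself is never defined, and no mechanism is given for computing its Laplacian to quadratic order in $n$. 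In the paper this content occupies Sections 3--5 and needs three separate ingredients: (a) an exact formula for $\tau_*(\mathcal W_n)$ in terms of limit linear series data (Theorem~\ref{thm:SpW}), proved by a local Wronskian computation on the open balls of $X^{\an}\setminus\Gamma$ together with the genus formula for residue curves; (b) the Okounkov-body local equidistribution (Theorem~\ref{thm:equilocal} and Corollary~\ref{cor:sminmax}), which is what permits replacing the quadratically-weighted slope sums $\sum_{i}s^{\nu}_{n,i}$ appearing in that formula by the extremal slopes $s^{\nu}_{n,0}$ and $s^{\nu}_{n,r_n}$; and (c) --- the genuinely hard step --- the identification $\lim_n\frac 1{nd}\bigl(s^{\nu_x}_{n,0}+s^{\nu_y}_{n,0}\bigr)=-\rho_e/(\ell_e+\rho_e)$ for each edge $e=\{x,y\}$, which the paper obtains by comparing the $x$-reduced and $y$-reduced divisors of $nD$: their difference $f_n$, normalized by $nd+s^{\nu_x}_{n,0}+s^{\nu_y}_{n,0}$, has Laplacian converging to $\delta_y-\delta_x$ on $\Gamma\setminus e$, and this potential-theoretic comparison is where the effective resistance actually comes from (with a separate argument when $e$ is a bridge). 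Your remark that ``the resistance weight should arise from the quadratic growth of the ramification'' gestures at (c) but contains none of it; without (a)--(c) the proof does not close.

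Two smaller problems. First, your reduction of $X^{\an}$-convergence to $\Gamma$-convergence via $\mu_n\bigl(\{x:\operatorname{dist}(x,\Gamma)\ge\rho\}\bigr)\to 0$ is not meaningful as stated: Weierstrass points are type I points, so \emph{all} of the mass of $\frac 1{\deg \mathcal W_n}\delta_{\mathcal W_n}$ sits at the ends of residue discs, away from any skeleton. What one actually uses is that functions $\tau^*_\Gamma(f)$, ranging over all skeleta $\Gamma$ and continuous $f$, are dense in the continuous functions on $X^{\an}$, so that it suffices to prove the skeleton statement for \emph{every} skeleton --- this is precisely how the paper reduces Theorem~\ref{thm:main2} to Theorem~\ref{thm:main}. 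Second, introducing an auxiliary model metric creates a metric-dependence that your own argument is then obliged to cancel exactly; the paper avoids this entirely by working with $\tau_*(\mathcal W_n)$ and slopes of rational functions, for which no metric is ever chosen. Neither of these is fatal, but the first needs to be repaired, and the second adds an obligation your sketch does not discharge.
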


Since continuous functions of the form $\tau^*(f)$, 
for the retraction $\tau: X^{\an} \onto \Gamma$ of $X^{\an}$ to a skeleton and 
continuous function $f$ 
on $\Gamma$, are dense in the space of continuous functions on $X^{\an}$, 
in order to  prove Theorem~\ref{thm:main2}, it will be enough to fix a skeleton 
$\Gamma$ of $X^{\an}$ and prove the following equidistribution theorem. 
Denote by $W_n  = \tau_*(\mathcal W_n)$ the  
reduction of $\mathcal W_n$ on $\Gamma$. Define the discrete measure
 $\mu_n := \frac 1{\deg(W_n)}\delta_{W_n}$.

\begin{thm}\label{thm:main} Notations as above, 
the measures $\mu_n$ converge weakly to the canonical admissible measure $\mu_{\ad}$ on $\Gamma$.
 \end{thm}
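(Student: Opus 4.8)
The plan is to prove equidistribution on the metric graph $\Gamma$ by understanding, with sufficient precision, the reduction $W_n = \tau_*(\mathcal W_n)$ of the Weierstrass divisor of $L^{\otimes n}$. The starting point is that the Weierstrass divisor has degree $\deg(\mathcal W_n) = g(g-1) + n\,d\,\cdot(\text{something linear in } n)$; more precisely $\deg(\mathcal W_n) = r_n(r_n+1)/2 \cdot (\text{number of sections related quantities})$, so that $\deg(\mathcal W_n)$ grows quadratically, like $\tfrac{1}{2} (nd)^2$ up to lower-order terms, where $d = \deg(L)$. After normalizing by $\deg(W_n)$, the leading quadratic behaviour is what will produce the admissible measure in the limit. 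The key is that weak convergence on the compact metric graph $\Gamma$ is equivalent to convergence of integrals $\int_\Gamma f \, d\mu_n \to \int_\Gamma f \, d\mu_{\ad}$ for all continuous $f$, and since piecewise-linear functions are dense it suffices to test against such $f$, or equivalently to control $\mu_n$ edge by edge.

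The main tool I would deploy is the tropical/combinatorial theory of reduction of divisors and the behaviour of Weierstrass points under specialization. First I would express the reduction $W_n$ in terms of data that is intrinsic to $\Gamma$ together with the reduction of $L$: one expects a formula describing $\tau_*(\mathcal W_n)$ as (the specialization of the divisor of a section realizing the Weierstrass points) plus a correction coming from the fact that specialization of Weierstrass points need not commute with taking Weierstrass points of the reduction. This is exactly where the promised ``description of the reduction of Weierstrass points answering a question of Eisenbud and Harris'' enters; I would invoke that description to write $W_n$ as a metric-graph divisor whose mass on each segment of $\Gamma$ is controlled. The heuristic is that the Weierstrass weight $w_{n,x}$ counts the vanishing of the Wronskian of a basis of sections of $L^{\otimes n}$, and under the valuation this Wronskian tropicalizes to a piecewise-linear function on $\Gamma$ whose Laplacian, suitably normalized, is what converges to $\mu_{\ad}$.

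Concretely, I would proceed as follows. First, fix a basis of $H^0(X, L^{\otimes n})$ and form the Wronskian; the Weierstrass divisor is its divisor of zeros, so $\mathcal W_n = \div(\mathrm{Wr}_n) + (\text{fixed twist})$. Second, take valuations and retract to $\Gamma$: the function $x \mapsto \val(\mathrm{Wr}_n)(x)$ descends to a piecewise-linear function $\psi_n$ on $\Gamma$, and by the slope/Poincaré–Lelong formula on metric graphs the reduction $W_n$ equals the graph-Laplacian $\Delta \psi_n$ up to the number of sections times a canonical term. Third, I would analyze the asymptotics of $\frac{1}{\deg(W_n)}\Delta\psi_n$ as $n \to \infty$. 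The leading term of $\psi_n$ should be governed by the Arakelov/Zhang potential: because the reductions of the $nd$ well-chosen sections equidistribute (their divisors reduce to the canonical metric-graph measure attached to $L$), the normalized Laplacian of the tropicalized Wronskian converges to the admissible measure $\mu_{\ad}$, whose defining property via Zhang is precisely that its potential has constant Laplacian off the vertices and the right mass on each edge.

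The hardest step, I expect, is the third one: making the asymptotic analysis of $\psi_n$ uniform enough to conclude \emph{weak} convergence rather than merely pointwise control of slopes. The subtlety is that the basis of sections of $L^{\otimes n}$, and hence the Wronskian, can behave erratically near the vertices of $\Gamma$, so the piecewise-linear functions $\psi_n$ may have slope contributions concentrating at vertices that do not vanish in the limit; ruling out such concentration — i.e. showing that no mass of $\mu_n$ escapes to the vertices in a way incompatible with $\mu_{\ad}$ — is the crux. I would handle this by using the linear-algebra structure of the reduction of the complete linear system $|L^{\otimes n}|$ (a rank-$nd+1-g$ system on $\Gamma$ for $n$ large), together with a Riemann–Roch estimate on $\Gamma$ to show that the distribution of reduced sections spreads out proportionally to arclength along each edge, forcing the normalized Weierstrass measure to converge to the admissible measure. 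The lower-order corrections coming from the Eisenbud–Harris description of reductions of Weierstrass points, while combinatorially delicate, contribute only $O(n)$ to a quantity of size $O(n^2)$ and therefore wash out after normalization.
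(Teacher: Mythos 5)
Your scaffolding (reduce weak convergence to edge-by-edge control, describe $\tau_*(\mathcal W_n)$ by a specialization formula for the Wronskian, then normalize) parallels the paper's strategy, but the crux of your argument contains a genuine gap, and as stated it would prove the wrong statement. You claim that the reduced sections ``spread out proportionally to arclength along each edge,'' forcing convergence to $\mu_{\ad}$. But $\mu_{\ad}$ is \emph{not} proportional to arclength: by Zhang's explicit formula its density on an edge $e$ is $\frac 1g\cdot\frac{1}{\ell_e+\rho_e}$, where $\rho_e$ is the effective resistance between the endpoints of $e$ in $\Gamma\setminus e$; this density varies from edge to edge, vanishes on bridge edges, and the measure carries atoms $\frac1g\delta_{K_g}$ at points of positive genus. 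Any proof must therefore produce the global, potential-theoretic quantity $\rho_e$ from somewhere, and nothing in your proposal does. In the paper this is exactly the content of Theorem~\ref{thm:reduction}: the slope sums are first reduced to the \emph{minimal} slopes $s^{\nu}_{n,0}$, and then, by comparing the $x$-reduced and $y$-reduced divisors of $nD$ and studying the Laplacian of their difference $f_n$ on $\Gamma\setminus e$, one shows $\frac{1}{nd}\bigl(s^{\nu_x}_{n,0}+s^{\nu_y}_{n,0}\bigr)\to -\rho_e/(\ell_e+\rho_e)$; the resistance appears because the limiting potential $F$ solves $\Delta F=\delta_y-\delta_x$ on $\Gamma\setminus e$ and $F(x)-F(y)=\rho_e$. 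Your appeal to ``a Riemann--Roch estimate on $\Gamma$'' has no analogous mechanism.

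There is a second, quantitative error that makes the gap unbridgeable as written: you assert that the corrections coming from the Eisenbud--Harris-type description of the reduction ``contribute only $O(n)$ to a quantity of size $O(n^2)$ and therefore wash out after normalization.'' In fact, in the specialization formula (Theorem~\ref{thm:SpW}) the correction at $x$ is $\sum_{\nu}\sum_{i=0}^{r_n}s^{\nu}_{n,i}$, a sum of $r_n+1\sim nd$ slopes each of size $O(n)$, hence of order $O(n^2)$ --- the \emph{same} order as $\deg(W_n)=g(r_n+1)^2$. Dropping these terms would leave only $(r_n+1)nd_x+\frac{r_n(r_n+1)}2(2g_x-2+\val(x))$, whose normalized limit is the purely atomic measure $\frac1{gd}\delta_D+\frac1g\delta_{K_g}+\frac1{2g}\delta_{K_\Gamma}$ supported on vertices, not $\mu_{\ad}$. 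The slope sums are the heart of the matter, and handling them requires the paper's second key input, absent from your proposal: the Okounkov-body local equidistribution theorem (Theorem~\ref{thm:equilocal} and Corollary~\ref{cor:sminmax}), which shows the normalized slopes fill an interval $\Lambda^\nu$ of length $d$ uniformly, so that $\frac{1}{r_n+1}\sum_i s^{\nu}_{n,i}\approx\frac12\bigl(s^\nu_{n,0}+s^\nu_{n,r_n}\bigr)$, reducing everything to the minimal slopes treated above. (A minor slip in the same direction: $\deg(\mathcal W_n)=g(r_n+1)^2\sim g(nd)^2$, not $\tfrac12(nd)^2$.)
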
 
 
 A consequence of our results, which seems to be also new,
 is the following.
 \begin{cor}\label{cor:onecomponent}
   Let $X$ be a smooth proper curve of genus $g>0$ over a discrete valuation field $K$ of 
   equicharacteristic zero. 
   Let $\mathfrak X$ be a regular semistable model of $X$ over $R'$, 
   the valuation ring of a finite extension $K'$ of $K$. Let $L$ be a line bundle 
   of positive degree on 
   $X$ and  denote by $\mathcal W_n$ (by an abuse of the notation) the (multi)set of
   Weierstrass points of $L^{\otimes n}$.
   Let $\C_0$ be an irreducible component of the special fiber $\mathfrak X_s$ of genus $g_0$, 
   and denote by $\mathcal W_{0,n}$ the (multi)set of Weierstrass points whose 
   specialization lie on $\C_0$. Then 
   $|\mathcal W_{0,n}|/|\mathcal W_n|$ tends to $g_0/g$.   
 \end{cor}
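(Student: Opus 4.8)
The plan is to deduce the corollary from the weak convergence of Theorem~\ref{thm:main} by identifying $g_0/g$ with the mass that $\mu_{\ad}$ assigns to the vertex of $\Gamma$ attached to $\C_0$. After passing to $\K=\widehat{\overline K}$, let $\Gamma$ be the skeleton of $X^{\an}$ attached to $\mathfrak X$; its vertices biject with the components of $\mathfrak X_s$, and I write $v_0$ for the vertex of $\C_0$, carrying vertex genus $g_0$. The geometric input is the description of the reduction map through the retraction $\tau\colon X^{\an}\onto\Gamma$: a point $x\in X(\K)$ reduces to a smooth point of $\C_0$ exactly when $\tau(x)=v_0$, and to a node of $\mathfrak X_s$ exactly when $\tau(x)$ lies in the open interior of the corresponding edge (the node being the waist of an open annulus whose skeleton is that edge). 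Hence $|\mathcal W_{0,n}|$ equals the multiplicity of $W_n=\tau_*(\mathcal W_n)$ at $v_0$, while $|\mathcal W_n|=\deg(\mathcal W_n)=\deg(W_n)$, so that
\[
\frac{|\mathcal W_{0,n}|}{|\mathcal W_n|}=\mu_n(\{v_0\}),
\]
and it suffices to show $\mu_n(\{v_0\})\to g_0/g$.

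I would next record, from the form of $\mu_{\ad}$ recalled in Section~\ref{sec:zhang}, the decomposition $\mu_{\ad}=\frac1g\bigl(\sum_v g_v\,\delta_v+\mu_{\Zh}\bigr)$, where $\mu_{\Zh}$ is Zhang's canonical measure of the loop part of $\Gamma$, of total mass $g-\sum_v g_v$ and, crucially, without atoms (it is absolutely continuous along the edges). Thus $\mu_{\ad}(\{v_0\})=g_0/g$, the target value.

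The main obstacle is that Theorem~\ref{thm:main} provides only weak convergence $\mu_n\to\mu_{\ad}$, and a single point is not a $\mu_{\ad}$-continuity set once $g_0>0$; so the atomic masses $\mu_n(\{v_0\})$ need not, a priori, converge to $\mu_{\ad}(\{v_0\})$, since mass could leak from the vertex onto adjacent edges. The upper bound is painless: the portmanteau inequality for the small closed ball $\overline B_\epsilon(v_0)$ (containing no other vertex) gives $\limsup_n\mu_n(\{v_0\})\le\limsup_n\mu_n(\overline B_\epsilon(v_0))\le\mu_{\ad}(\overline B_\epsilon(v_0))\to g_0/g$ as $\epsilon\to0$, using that $\mu_{\Zh}$ has no atom at $v_0$. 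The lower bound is the crux and cannot come from weak convergence alone; here I would invoke the finer description of the reduction of Weierstrass points established earlier in the paper, to the effect that the mass of $\mu_n$ on edge interiors converges, edge by edge, to the continuous measure $\frac1g\mu_{\Zh}$. Choosing $\epsilon$ so that the sphere of radius $\epsilon$ about $v_0$ is $\mu_{\Zh}$-null, this yields $\mu_n(B_\epsilon(v_0)\setminus\{v_0\})\to\frac1g\mu_{\Zh}(B_\epsilon(v_0)\setminus\{v_0\})$; combined with the open-set bound $\liminf_n\mu_n(B_\epsilon(v_0))\ge\mu_{\ad}(B_\epsilon(v_0))$ and the identity $\mu_{\ad}(B_\epsilon(v_0))=g_0/g+\frac1g\mu_{\Zh}(B_\epsilon(v_0)\setminus\{v_0\})$, it gives $\liminf_n\mu_n(\{v_0\})\ge g_0/g$. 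Together with the upper bound this proves $\mu_n(\{v_0\})\to g_0/g$.

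Finally I would remark on the node convention: a Weierstrass point reducing to a node of $\C_0$ has its specialization on $\C_0$ yet retracts to an edge interior, so it feeds the continuous part $\frac1g\mu_{\Zh}$ rather than the atom at $v_0$; the limit $g_0/g$ therefore records the reductions to the smooth locus of $\C_0$, the node contributions being shared among the incident components along the edges.
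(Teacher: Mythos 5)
Your reduction of the statement to $\mu_n(\{v_0\})\to\mu_{\ad}(\{v_0\})=g_0/g$ is correct, and your closing remark about nodes is a genuine subtlety that the paper handles the same way (its reformulation, ``the proportion of Weierstrass points mapped to $x_0$ under the retraction,'' is exactly your smooth-locus convention). The portmanteau upper bound is also fine. The gap is in the lower bound, at precisely the point you yourself call the crux. The input you invoke --- that ``the mass of $\mu_n$ on edge interiors converges, edge by edge, to $\frac1g\mu_{\Zh}$'' --- is not established anywhere in the paper. What the paper proves (Theorem~\ref{thm:reduction}) is convergence of $\mu_n$ tested against the characteristic function $1_e$ of a \emph{closed} edge, endpoints included; the difference between closed-edge and open-edge masses is exactly the sum of the vertex atoms $\mu_n(\{v\})$, i.e., exactly the quantities whose convergence is in question. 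So, given what is actually proven, the ``finer description'' you quote is equivalent to the assertion you are trying to prove, and your lower bound is circular. It also cannot be extracted from weak convergence plus closed-edge convergence by a limiting argument: any attempt to bound $\limsup_n\mu_n\bigl(B_\epsilon(v_0)\setminus\{v_0\}\bigr)$ from above runs into controlling the mass of $\mu_n$ on punctured neighborhoods $(v_0,z_\delta)$ uniformly in $n$, which is the same problem again (and for a leaf vertex even total-mass bookkeeping over all vertices gives nothing).

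The non-circular way to close the gap --- and it is the paper's own proof --- is to compute the atom directly instead of squeezing it between open and closed sets. By Theorem~\ref{thm:SpW}, $\mu_n(\{v_0\})=c_{v_0}/\bigl(g(r_n+1)^2\bigr)$ with
\[
c_{v_0} = (r_n+1)\,n d_{v_0} + \frac{r_n(r_n+1)}{2}\bigl(2g_0-2+\val(v_0)\bigr)-\sum_{\nu\in\T_{v_0}(\Gamma)}\sum_{i=0}^{r_n}s^{\nu}_{n,i}\,.
\]
Corollary~\ref{cor:sminmax}, together with $\lim_n (s^\nu_{n,r_n}-s^\nu_{n,0})/n = d$ (volume of the Okounkov body), gives $\sum_i s^\nu_{n,i}=(r_n+1)\bigl(s^\nu_{n,0}+\frac{nd}{2}\bigr)+o(n^2)$ for each $\nu\in\T_{v_0}(\Gamma)$, while Proposition~\ref{prop:coef} and Inequality~\eqref{lem:redcoeffineq} (the estimate on the $v_0$-reduced divisor, as in Equation~\eqref{eq:pr4}) give $\sum_{\nu}s^\nu_{n,0}= nd_{v_0}-nd+O(1)$. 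Substituting, the terms in $d_{v_0}$ and $\val(v_0)$ cancel to leading order and $c_{v_0}=g_0\,r_n(r_n+1)+o(n^2)$, whence $\mu_n(\{v_0\})\to g_0/g$. This single local computation yields both bounds at once and uses only results the paper has actually proved; your portmanteau scaffolding then becomes unnecessary (though your upper bound remains a correct consistency check). If you prefer to keep your structure, you must first prove your edge-interior claim, and the only available route is to re-run the analysis of Section~\ref{sec:proof} at sub-segments and at the vertex itself --- which is this computation in disguise.
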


\begin{remark}\rm A combination of our results and Mumford-Neeman theorem shows the following. 
Notations as in the above corollary, suppose in addition that the residue field 
$\k\simeq \mathbb C$, and $g_0\geq 2$. Then the reduction of $\mathcal W_{0,n}$ in $\C_0$
is equidistributed 
according to the Arakelov 
measure on $\C_0^{\an}$. A similar result holds if we assume that $\k$ is an algebraically 
closed complete 
non-Archimedean field, with a residue field of characteristic zero; 
the equidistribution is then with respect to the Zhang-Arakelov measure on 
$\C_0^{\an}$. We omit the details.
\end{remark}

The rest of this paper is devoted to the proof of Theorem~\ref{thm:main}. 

\medskip

A useful machinery in the proof 
is the framework of limit linear series  introduced in a joint paper with M. Baker~\cite{AB}, and further developed 
in an upcoming paper of the author in~\cite{A}. 
The limit linear series from~\cite{AB} provided an extension  of crude limit linear series in the terminology of Eisenbud-Harris~\cite{EH86} 
to any semistable 
curve, while the limit linear series that we consider here correspond to extensions 
to any semistable curve of the 
refined limit linear series in that terminology~\cite{EH86}. We will recall the basic set-up in 
Section~\ref{sec:lls}, and refer to~\cite{A} for more details and results.

\medskip

In Section~\ref{sec:reduction}, we consider the reduction of Weierstrass divisors on skeleta, 
and show that limit linear series allow to provide a complete description of the points in the support as well 
as their coefficients in the case $\k$ has characteristic zero, c.f.~Theorem~\ref{thm:SpW} below. 
This gives a complete and quite simple 
answer to a question of Eisenbud-Harris~\cite{EH87} on the reduction of Weierstrass points on a 
(semistable) degenerating family of curves over complex numbers, furthermore, refining the 
previous partial result of Esteves-Medeiros~\cite{EM}. 
\begin{thm}\label{thm:SpW}
Let $L = \mathcal O(\mathcal D)$ be a line bundle of degree $d$ and rank $r$, 
 and denote by $\mathcal W$ its 
 Weierstrass divisor on $X$. Let $\Gamma$ be a skeleton of $X^{\mathrm an}$  
 and let  $(D, \mathfrak S)$ be the limit $\g^r_d$ induced on $\Gamma$ by 
 the specialization theorem. Write $D = \sum_{x\in\Gamma} d_x(x)$.
 We have 
 \[\tau_*(\mathcal W) = \sum_{x\in \Gamma} c_x (x),\]
 where the coefficient $c_x$ of a point $x$ has the following expression:
 \[c_x = (r+1)d_x + \frac{r(r+1)}{2}(2g_x-2+\val(x)) - \sum_{\nu\in \T_x(\Gamma)} \sum_{i=0}^r s_i^\nu.\]
 \end{thm}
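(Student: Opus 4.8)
The plan is to realize $\mathcal W$ as the divisor of a global Wronskian and to compute its retraction $\tau_*$ directly, the local contribution at each $x$ being controlled by the slopes of the tropicalized sections, which the specialization theorem of \cite{A} identifies with the $s_i^\nu$. First I would fix a basis $f_0,\dots,f_r$ of $H^0(X,L)$ and form the Wronskian $\mathrm{Wr}(f_0,\dots,f_r)$, a global section of $M:=L^{\otimes(r+1)}\otimes\omega_X^{\otimes\binom{r+1}{2}}$ whose divisor is exactly $\mathcal W$. This already accounts for the constant part of the formula: the tropical multidegree of $M$ at $x$ is $(r+1)d_x+\binom{r+1}{2}(2g_x-2+\val(x))$, using that $\tau_*$ of the divisor of a section of $L$ is the specialization $D=\sum_x d_x(x)$, and that the canonical bundle tropicalizes to the canonical divisor $K_\Gamma=\sum_x(2g_x-2+\val(x))(x)$ of the skeleton.

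Next I would apply the non-Archimedean slope (Poincaré--Lelong) formula to $\mathrm{Wr}$: for the skeletal metric, $c_x=\tau_*\mathcal W(x)$ equals the tropical multidegree of $M$ at $x$ plus the sum over $\nu\in\T_x(\Gamma)$ of the outgoing slopes of $\log\|\mathrm{Wr}\|$. The problem is thereby reduced to computing these slopes. The crucial point is that the outgoing slope of $\log\|\mathrm{Wr}\|$ in a direction $\nu$ is the sum of the outgoing slopes of $\log\|f_i\|$ for a basis adapted to $\nu$, the contributions of the derivatives (the $\omega_X^{\binom{r+1}{2}}$ factor) being precisely what the metric/multidegree term records. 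Choosing the basis reduced in the direction $\nu$ so that its tropical slopes realize the $r+1$ values $s_0^\nu,\dots,s_r^\nu$ output by the specialization theorem, the outgoing slope of $\log\|\mathrm{Wr}\|$ equals $-\sum_{i=0}^r s_i^\nu$; summing over $\nu$ yields the claimed expression for $c_x$.

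This middle step has a transparent geometric meaning that I would use to organize the computation. On the semistable model whose skeleton is $\Gamma$, refined so that $x$ is a vertex carrying a smooth component $\C_x$ of genus $g_x$, the Weierstrass points reducing to the smooth locus of $\C_x$ are exactly the Weierstrass points of the aspect $(L_x,V_x)$ away from the nodes. Hence $c_x$ is the total Pl\"ucker ramification $(r+1)\deg(V_x)+\binom{r+1}{2}(2g_x-2)$ of $V_x$ on $\C_x$, minus the ramification imposed at the nodes $p_\nu$, namely $\sum_{\nu\in\T_x(\Gamma)}\bigl(\sum_{i=0}^r s_i^\nu-\binom{r+1}{2}\bigr)$; the $\binom{r+1}{2}$ collected once per node reassembles the $\val(x)$ appearing inside the canonical term, and the normalization of the specialization theorem gives $\deg(V_x)=d_x$.

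The main obstacle is this identification: proving that the tropical outgoing slopes of a reduced basis are precisely the slopes $s_i^\nu$ of the limit $\g^r_d$, and that the Wronskian of a reduced basis reduces to the Wronskian of the reductions on each component, so that the derivative contributions match the canonical term. This forces one to work on the formal fibers and annuli, to establish the existence of a basis simultaneously adapted to a given direction, and to control the twist relating $\deg(V_x)$ to $d_x$ and the vanishing orders to the $s_i^\nu$ --- exactly the content of the specialization theorem for refined limit linear series in \cite{A}, building on \cite{AB}. As a consistency check I would sum over $x$: using $\sum_x d_x=d$, the identity $\sum_x(2g_x-2+\val(x))=2g-2$, and the balancing of the summed slopes $\sum_{i=0}^r s_i^\nu$ along each edge (harmonicity of the tropical Wronskian, which makes $\sum_x\sum_{\nu\in\T_x(\Gamma)}\sum_{i=0}^r s_i^\nu=0$), the total recovers the classical Pl\"ucker count $\deg\mathcal W=(r+1)d+\binom{r+1}{2}(2g-2)$, while effectivity of each $c_x$ follows from its interpretation as an interior ramification.
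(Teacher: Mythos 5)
Your proposal is correct, but it takes the route of the paper's proof of Theorem~\ref{thm:SpW2} (the arbitrary residue characteristic generalization) rather than the paper's own, more elementary, proof of Theorem~\ref{thm:SpW}. For this theorem the paper never metrizes $\omega_X$ and never invokes a Poincar\'e--Lelong formula for metrized sections: it decomposes $X^{\an}\setminus\Gamma$ into open balls $B_\nu$, one for each $\nu\in\T_x\setminus\T_x(\Gamma)$, shows via the slope formula applied to the Wronskian viewed as a meromorphic \emph{function} on each ball that the total Weierstrass weight inside $B_\nu$ equals $\sum_{i}s_i^\nu-\binom{r+1}{2}$ (Lemmas~\ref{lem:cle} and~\ref{lem:totalweight}), and then converts the sum over the ball directions $\T_x\setminus\T_x(\Gamma)$ into a sum over the skeleton directions $\T_x(\Gamma)$ using the classical total-weight count $\sum_{a\in \C_x}w(a)=(g_x-1)r(r+1)$ on the residue curve (Lemma~\ref{lem:totweier}), before adding the $(r+1)\mathcal D(a)$ contributions. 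That is exactly the bookkeeping of your third paragraph, so your ``geometric meaning'' is in fact the paper's actual proof; your main line --- the Wronskian as a global section of $M=L^{\otimes(r+1)}\otimes\omega_X^{\otimes\binom{r+1}{2}}$, a model giving multidegree $(r+1)d_x+\binom{r+1}{2}(2g_x-2+\val(x))$, and edge slopes $-\sum_i s_i^\nu$ --- is the conceptual variant, which costs the metrization of differentials (Temkin/CTT, or a model of $\omega$) and the slope formula for formally metrized line bundles, but buys the extension to $\mathrm{char}(\k)>0$ with Hasse-derivative Wronskians, which is precisely why the paper reserves it for Theorem~\ref{thm:SpW2}. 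One attribution you should fix: the identity ``reduction of the Wronskian equals the Wronskian of the reductions,'' which you correctly isolate as the crux, is not part of the specialization theorem of~\cite{A} (that theorem only supplies the slope structure $s_i^\nu$ and the divisor $D$); it is a separate residue-characteristic-zero computation --- carried out in the paper inside the proof of Lemma~\ref{lem:totalweight}, where one also checks that the reduced Wronskian is not identically zero so that its vanishing order at $x^\nu$ is really $\sum_i s_i^\nu - \binom{r+1}{2}$ --- and it is exactly the step that fails when $\mathrm{char}(\k)=p>0$.
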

In the above theorem $g_x$ and $\val(x)$ denote the genus and the valence of $x$ in $\Gamma$, respectively,
$\T_x(\Gamma)$ denotes the set of unit tangent directions to $\Gamma$ 
at $x$, and $s_i^\nu$ are the integers underlying the definition of
the limit linear series $(D, \mathfrak S)$ on $\Gamma$, which correspond to the slopes along $\nu$ 
of the reduction of rational functions in the complete linear series defined by 
$\mathcal O(\mathcal D)$, c.f. Section~\ref{sec:lls}.

The fact that such a simple description of the reduction of 
 Weierstrass divisors in terms of a  
 combinatorial data exists  is probably a specific property of the residue field 
 $\k$ being of characteristic zero. 
 This is what refrains us from stating the 
 equidistribution theorem in its full generality.  However, 
 in the case $\k$ has positive characteristic, we still have a similar description 
 of the reduction of the Weierstrass divisor in terms of the reduction of the Wronskian at points of type II
 in $\Gamma$, see Theorem~\ref{thm:SpW2}. 
 We expect indeed that the equidistribution theorem remains 
 valid in any 
 characteristic; we discuss in Remark~\ref{rem:positive} what has to 
 be done in order to get such a result from Theorem~\ref{thm:SpW2}.

\medskip

A second tool essential in the proof is the theory of Okounkov bodies (in dimension one), 
and a well-known equidistribution phenomena for Okounkov bodies~\cite{KK, LM, Bouk}. Once 
Theorem~\ref{thm:SpW} has been discovered, the 
appearance of the Okounkov bodies becomes indeed quite natural if we remember that to any 
type II point $x$ of 
$X^{\an}$ is associated a curve $\C_x$ over $\k$, whose points are in bijection with the branches of 
$X^{\an}$ adjacent to $x$. Fixing such a branch $\nu\in \T_x(X^{\an})$ of $X^{\an}$, and 
considering the slopes of 
rational functions along $\nu$ in the complete linear series defined by powers of 
$L$ allows to define an interval 
$\Lambda^\nu$ of volume $d = \deg(L)$ in $\mathbb R$. The local equidistribution theorem then asserts 
that the normalized slopes are 
equidistributed in $\Lambda^\nu$, c.f. Theorem~\ref{thm:equilocal}.

\medskip

Theorem~\ref{thm:SpW} and~Theorem~\ref{thm:equilocal}, 
along with basic properties of limit linear series from Section~\ref{sec:lls}, 
and a careful analysis of the variation of the minimum slope along edges of 
$\Gamma$,  then allow to finish the proof of Theorem~\ref{thm:main}.

 \medskip

We like to mention that the question of existence of a non-Archimedean 
version of Mumford-Neeman theorem has been 
implicitly asked in some places in the literature~\cite{Burnol, deJong}. 
We first came to learn about this through discussions with Matt Baker on the behavior of 
(divisorial) Weierstrass points on a metric graph $\Gamma$, which, we recall, are 
 intrinsically defined in terms of the divisor theory on $\Gamma$ 
(and without any reference to a choice of an algebraic curve $X$ with reduction graph $\Gamma$). 
While this general question still remains open, we note that Theorem~\ref{thm:SpW} 
predicts an intrinsic integer number can be assigned to any 
connected component of the locus of the (divisorial) Weierstrass points in $\Gamma$, 
thus reducing Baker's question to the question of understanding these numbers.

We would like to use this opportunity to thank Matt Baker for sharing his  
questions and conjectures, and for his  continuous support and collaboration, on related topics. We thank Matt Baker and 
Farbod Shokrieh for helpful comments on a preliminary draft of this paper.

 \medskip
 
 In the rest of this introduction, we will fix the notation, and briefly recall some well-known
 results which will be used in the next sections.

\subsection{Weierstrass divisor}

Let $\K$ be an algebraically closed  field, $X$ a smooth proper curve over $\K$, and $L$ a line bundle of positive degree on $X$. 
Consider a basis  
$f_0, \dots, f_r$ be a basis for $H^0(X,L)$, and set $\mathcal F = \{f_0,\dots, f_r\}$. 

If the characteristic of $\K$ is zero, then we consider the Wronskian 
$\mathrm{Wr}_\mathcal F$ which is a global section of 
$\Gamma(X, L^{\otimes (r+1)}\Omega^{\otimes \frac{r(r+1)}2})$. 
Intrinsically, it is defined as follows: consider the jet bundle 
$J_r = \pi_{1*}\Bigl(\,\pi_{2}^*(L)/I^{r+1}\,\Bigr)$, where 
$I$ is the ideal of the diagonal in $X\times X$, and $\pi_1$ and $\pi_2$ are the projections into 
the first and the second factor. There is a natural induced filtration on $J_r$ by powers of 
$I$, and the quotients are identified with $L \otimes \Omega_X^{\otimes i}$. 
The set of global sections $f_0,\dots, f_r$ define sections $\pi_2^*(f_i)$ of $J_r$, 
whose determinant will be a global section of 
$L^{\otimes (r+1)}\otimes \Omega^{\otimes r(r+1)/2}$ 

In local coordinate, for any point $p\in X(\K)$, 
we have $L_p \simeq \mathcal O_{X,p}$ as an $\mathcal O_p$-module. 
Taking a generator $g_p$ of $L_p$, any global section $f_i$ can be written as 
$f_i = f_{i,p}g_p$ for $f_{i,p} \in \mathcal O_p$. The local ring $\mathcal O_p$ 
is a discrete valuation ring with a uniformizer $t_p$. 
We can thus define $f_{i,p}^{(j)}$ recursively by 
$f_{i,p}^{(j)} = \frac d{d\tau_p}f_{i}^{(j-1)}$ for any $j\geq 0$, with 
$f_{i,p}^{(0)}=f_{i,p}$. The stalk of the Wronskian $\mathrm{Wr}_{\mathcal F}$ at $p$ is given by 
$$ \mathrm{Wr}_{\mathcal F, p} = \det \Bigl(f_{i,p}^{(j)}\Bigr)_{0\leq i,j\leq r} g_p^{(r+1)} 
(dt_p)^{\frac {r(r+1)}2}\in H^0(\mathcal O_p, 
L_p^{\otimes (r+1)} \otimes \Omega_p^{\otimes \frac {r(r+1)}2}). $$
In particular, it does not depend on the choice of $g_p$ and 
the local uniformizer $t_p$.

\medskip

In positive characteristic $\mathrm{char}(\K)>0$, it is well-known that Weierstrass points 
are more subtle, and the definition given above has to be modified, see e.g.~\cite{Neeman2}. 
 We should instead consider the Hasse derivative with respect to a separating parameter $t$ for 
$\K(X)$~\cite{Schmidt, SV}. 
There exists $t \in \K(X)$ such that $\K(X) / \K(t)$ is a finite separable extension. 
The Hasse derivatives 
$D^{(j)}_t$, defined on $\K[t]$ by $D^{(j)}_t(t^{m}) = \binom{m}{j}t^{m-j}$, uniquely extend to 
$\K(X)$. There exist a set of $(r+1)$ integers $0\leq b_0 <  b_1<\dots < b_r$ such that 
$\det\Bigl(\,D_t^{(b_j)}f_i\,\Bigr)_{0\leq i,j\leq r} \neq 0$. In addition, $(b_0,\dots,b_r)$ is chosen to be minimum in the lexicographic order with respect to this property. 
The corresponding Wronskian $\mathrm{Wr}_{\mathcal F}$ defined by $\mathrm{Wr}_{\mathcal F} = 
\det\Bigl(\,D^{(b_j)}f_i\,\Bigr)_{i,j=0}^r$
defines a well-defined global section 
of $L^{\otimes (r+1)}\otimes \Omega^{\otimes \sum_{i=0}^r b_i}$, 
whose zero divisor is independent of the choice of $t$, and by definition, 
is called the Weierstrass divisor of $L$~\cite{Lak81, Lak84, Schmidt, 
SV}. Note that the Wronskian admits a similar description in terms of jumps of jet bundles 
in positive characteristic~\cite{Lak81, Lak84}.

\subsection{Berkovich analytic curves}
We provide a brief discussion of the structure of Berkovich analytic 
curves, and introduce the main notation which will be used later. 
For further details, we refer to~\cite{BPR, Ber90, Ducros2}.

\medskip

Let $X/\K$ be an algebraic variety. The topological space underlying the Berkovich analytification $X^{\an}$ of $X$ is described as 
follows. Each point $x$ of $X^{\an}$ corresponds to a scheme-theoretic point $X$, 
with residue field 
$\K(x)$, and an extension $| \, |_x$ of the absolute
value on $\K$ to $\K(x)$.   
The topology on $X^{\an}$ is the weakest one for which $U^{\an} \subset X^{\an}$ is open for 
every open affine
subset $U \subset X$ and the function $x \mapsto |f|_x$ 
is continuous for every $f \in {\mathcal O}_X(U)$.  By definition, the set $X(\K)$ of closed points of $X$ is naturally included in
$X^{\an}$,  and has a dense image. 
The space $X^{\an}$ is locally compact, Hausdorff, and locally path-connected.
Furthermore, $X^{\an}$ is compact iff $X$ is proper, and path-connected iff $X$ is connected.
Analytifications of algebraic varieties is a subcategory of a larger category of 
$\K$-analytic spaces, and e.g., open subsets of $X^{\an}$ come with 
a $\K$-analytic structure in a natural way~\cite{Ber90}.

For any point $x$ of $X^{\an}$, 
 the completion of the residue field 
$\K(x)$ of $X$ with respect to $|\,|_x$ is denote by $\H(x)$, and the residue field of the valuation field $(\H(x), |\,|_x)$ is denoted by
$\widetilde{\H(x)}$.

\subsubsection{Structure of analytic curves}
For an analytic curve $X^{\an}$, the points can be classified into four types. 
By Abhyankar's inequality, 
$\textrm{tr-deg}\Bigl(\widetilde{\mathcal H(x)}/\k\Bigr)+\textrm{rank}\Bigl(|{\mathcal H}(x)^\times| / 
|\K^\times|\Bigr) \leq 1$, where the rank is that of a finitely generated abelian group.
 The point $x$ is then of type I if it belongs to $X(\K)$ in which case, 
${\mathcal H}(x) \cong \K$, of type II 
if the transcendence degree of $\widetilde {\mathcal H(x)}/\k$ is one, of type III if 
the rank of the valuations extension is one, and of type IV otherwise.

\subsubsection*{Semistable vertex sets and skeleta}

A semistable vertex set for $X^{\an}$ is a finite set $V$ of points of $X^{\an}$ of type II 
such that $X^{\an} \setminus V$ is isomorphic to a
disjoint union of a finite number of open annuli and an infinite number of open balls.  By semistable reduction theorem, semistable vertex sets always exist,
and more generally, any finite set of points of  type II in $X^{\an}$ 
is contained in a semistable vertex set.
The skeleton $\Gamma = \Sigma(X,V)$ of $X^{\an}$ with respect to a semistable vertex set 
$V$ is the subset of $X^{\an}$ defined as the union of $V$ and the skeleton of
each of the open annuli in the semistable decomposition  associated to $V$.
Using the canonical metric on the skeleton of the open annuli, 
$\Gamma$ comes naturally equipped with the 
structure of a finite metric graph  contained in $X^{\an}$. In addition, $\Gamma$ has a 
natural model 
$G=(V,E)$ where the edges are in correspondence with the annuli in the semistable decomposition. In this paper, we only consider semistable vertex sets whose associated model is a simple graph, i.e., 
without loops and multiple edges.

Semistable vertex sets for $X^{\an}$ correspond bijectively to semistable formal 
models $\fX$ for $X$ over $R$.

\subsubsection*{Retraction to the skeleton} 
Let $\Gamma$ be a skeleton of $X^{\an}$ defined by a semistable vertex set $V$. 
There is a canonical retraction map $\tau : X^{\an} \onto \Gamma$ which 
is in fact a strong deformation retraction~\cite{Ber90}. In terms of the semistable decomposition, 
$\tau$ is identity on $\Gamma$, sends the points of each open ball $B$
to the unique point of $\Gamma$
 in the closure $\overline B$ of $B$, called the end of $B$, 
 and is the retraction to the skeleton for the open annuli~\cite{Ber90, BPR}.

\subsubsection*{Residue curves and the genus formula}
A point $x \in X^{\an}$ of type II has a (double) residue field $\widetilde{{\mathcal H}(x)}$ which is of transcendence degree one over $\k$.
We denote by $\C_x$ the unique smooth proper curve over $\kappa$ with function field 
$\widetilde{{\mathcal H}(x)}$, and denote by $g_x$ the genus of $\C_x$. 
If $V$ is any semistable vertex set for $X^{\an}$, then for any point of type II in $X^{\an}\setminus V$, 
$g_x=0$, and by semistable reduction theorem, we have the following genus formula:
\[
g= g(X) = g(\Gamma) + \sum_{x \in V} g_x,
\]
where $g(\Gamma) = |E|-|V|+1$, for $G=(V,E)$ the model of the skeleton $\Gamma = \Sigma(X,V)$, 
is the first Betti number of $\Gamma$. We extend the definition of $g(\cdot)$ to all points of 
$\Gamma$ by declaring $g(x)=0$ if $x$ is not a point of type II in $X^{\an}$, obtaining in this way an augmented metric graph in the terminology of~\cite{ABBR}.

\subsubsection*{Tangent vectors}

There is a canonical metric on $ {\mathbf H}(X^{\an})$ which restricts to the metric on 
$\Gamma = \Sigma(X^{\an},V)$ for any semistable vertex set $V$ for $X^{\an}$.  
Here, by $ {\mathbf H}(X^{\an})$, we denote $X^{\an}$ without the set of points of type I and IV.

A geodesic segment starting at $x \in X^{\an} \setminus X(\K)$ is an isometric 
embedding $\alpha : [0,\theta] \to X^{\an}\setminus X(\K)$ for some $\theta > 0$ 
such that $\alpha(0)=x$. Two geodesic segments starting at $x$ are called 
equivalent if they agree on a neighborhood of $0$. As usual, a 
tangent direction at a point $x$ is an equivalence class of geodesic segments 
starting at $x$. We denote by $\T_x = \T_x(X^{\an})$ the set of all tangent directions at $x$. 

For any simply connected neighborhood $U$ of $x \in X^{\an}$, there is a natural 
bijection between $\T_x$ and
the connected components of $U \setminus \{ x \}$.   
There is only one tangent direction at $x$ when $x$ is of type I; 
for $x$ of type III we have $|\T_x|=2$. (For $x$ of type IV we have $|T_x|=1$.)  
For a point $x$ of type II, there is a canonical bijection between $\T_x$ and 
$\C_x(\kappa)$, the set of closed points of the smooth proper curve $\C_x$ associated to $x$.  Points of $\C_x(\k)$ correspond to discrete valuations on 
$\widetilde{{\mathcal H}(x)}$ which are trivial on $\kappa$, and the resulting
bijection with $\T_x$ associates to a vector 
$\nu \in \T_x$, a discrete valuation 
${\rm ord}_{\nu} : \kappa(\C_x)^\times \to \ZZ$: If $x^\nu$ denotes 
the corresponding point of $\C_x(\k)$
then, for every nonzero rational function $\widetilde{f} \in \kappa(\C_x)$,
we have 
${\rm ord}_{\nu}(\widetilde{f}) = \ord_{x^\nu}(\widetilde{f})$.

\subsubsection{Reduction of rational functions and the slope formula}
\label{section:ReductionOfRationalFunctions}

Let $x \in X^{\an}$ be a point of type 2. For 
a nonzero rational function $f$ on $X$, there is an element 
$c \in \K^\times$ such that $|f|_x=|c|$. 
Define $\widetilde f \in \kappa(\C_x)^\times$ to be the image of $c^{-1}f$ in
$\widetilde{{\mathcal H}(x)} \cong \kappa(\C_x)$. Note that if the valuation of $\K$ has a 
section (which is the case for algebraically closed fields~\cite[Lemma 2.1.15]{MS}), this can be made well-defined; otherwise, it is well-defined up to a multiplicative scalar.

If $H$ is a $\K$-linear subspace of $\K(X)$, 
the collection of all possible reductions of nonzero elements of $H$, together with $\{ 0 \}$,
forms a $\k$-vector space $\widetilde H$. In addition, we have 
$ \dim \widetilde{H} = \dim H$ (c.f.~\cite{AB}).

A function $F:X^{\an}\to\RR$ is piecewise linear if 
for any geodesic segment $\alpha:[a,b]\into X^{\an}\setminus V$, the
pullback map $F\circ\alpha:[a,b]\to\R$ is piecewise linear.  
The outgoing slope of a piecewise linear function $F$ at a point
$x\in X^{\an}$ along a tangent direction $\nu\in \T_x$ is defined by 
\[ d_{\nu}F(x) = \lim_{t\to 0} (F\circ\alpha)'(t), \]
where $\alpha:[0,\theta]\into X^{\an}$ is a geodesic segment starting at $x$ which 
represents $\nu$.  
A piecewise linear function $F$ is called harmonic at a point $x\in X^{\an}\setminus V$ if
the outgoing slope $d_{\nu}F(x)$ is zero for all but finitely many $\nu \in \T_x$, and in addition 
$\sum_{\nu\in \T_x} d_{\nu}F(x) = 0$.

The following theorem will be essential all through the paper~\cite{BPR, BL, Thuillier}. 
It is called the slope formula in~\cite{BPR} and is also a consequence of the non-Archimedean 
Poincar\'e-Lelong formula~\cite{Thuillier}. 

\begin{thm}[Slope formula]  \label{thm:PL} Let $X$ be a smooth proper curve over $\K$, and 
 $f$ be a nonzero rational function in $\K(X)$. 
 Let $F = -\log |f|: X^{\an}\to\RR\cup\{\pm\infty\}$.
Let $V$ be a semistable vertex set of $X$ such that zeros and poles of $f$ 
are mapped to vertices in 
$V$ under the retraction map $\tau$ from $X^{\an}$ to the skeleton $\Gamma = \Sigma(X,V)$.  We have
  \begin{enumerate}
  \item[$(1)$]  $F$ is piecewise linear with integer slopes, and $F$ is linear on each 
  edge of $\Gamma \hookrightarrow X^{\an}$. 
  \item[$(2)$]  If $x$ is a type-$2$ point of $X^{\an}$ and $\nu\in \T_x$, then
    $d_{\nu} F(x) = \ord_\nu(\widetilde f_x)$.
  \item[$(3)$]  $F$ is harmonic at all $x \in {\mathbf H}(X^{\an})$.
  \item[$(4)$]  Let $x$ be a point in the support of $\div(f)$, let $e$ be the ray 
  in $X^{\an}$ with one endpoint $x$ and another endpoint $y\in V$, and
    let $\nu \in \T_y$ be the tangent direction represented by $e$.  Then $d_{\nu}
    F(y) = \ord_x(f)$.
  \end{enumerate}
\end{thm}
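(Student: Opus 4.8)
The plan is to reduce all four assertions to explicit computations on the two local models appearing in the semistable decomposition associated to $V$, namely open balls and open annuli. First I would enlarge $V$ (this is harmless, since the statements are local and stable under passing to a finer skeleton) so that, in addition to retracting $\div(f)$ onto vertices, each open ball and each open annulus of $X^{\an}\setminus V$ contains no zero or pole of $f$ in its interior. On an open ball with no zero or pole, $f$ is a unit and $|f|$ is constant, equal to its value at the unique boundary point; hence $F$ is constant on such a ball and the retraction $\tau$ collapses it to a single vertex. This already localizes the slopes of $F$ on the edges of $\Gamma$ and on the finitely many rays ending at points of $\div(f)$, and reduces parts $(1)$ and $(4)$ to the analysis of a single standard annulus.

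On a standard open annulus $\mathbf A=\{x: r<|T(x)|<R\}$ with coordinate $T$, any invertible analytic function factors as $f=cT^n u$ with $c\in\K^\times$, $n\in\ZZ$, and $u$ a unit with $|u|$ identically $|c|$ (the Laurent expansion of $f$ has a dominant monomial). The skeleton of $\mathbf A$ is the family of Gauss points $x_\rho$ for $r<\rho<R$, parametrized isometrically by $t=\log\rho$, and $-\log|f|_{x_\rho}=-\log|c|-nt$ is affine with integer slope $n$, giving the linearity and integrality in part $(1)$. For part $(4)$, the ray from a point $x\in\operatorname{supp}\div(f)$ to its end $y$ is the skeleton of such an annulus (or punctured ball) in which $f=(T-a)^m\cdot(\text{unit})$ with $m=\ord_x(f)$, and the same computation yields $d_\nu F(y)=m$.

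The heart of the argument is part $(2)$, which I expect to be the main obstacle. Given a type-$2$ point $x$ and a tangent direction $\nu\in\T_x$ corresponding to the closed point $x^\nu\in\C_x(\k)$, I would take a sub-annulus emanating from $x$ in the direction $\nu$ and compare the slope $n$ computed above with $\ord_\nu(\widetilde f_x)$. The key point is that under the reduction map the chosen coordinate $T$ reduces to a local uniformizer at $x^\nu$ on the residue curve $\C_x$, so that $\widetilde{c^{-1}f}$ equals a nonzero scalar times the $n$-th power of that uniformizer; its order at $x^\nu$ is then exactly $n=d_\nu F(x)$. Making this identification precise — matching the algebraic reduction of functions to the metric slope — is the technical crux, and is where the equality $\dim\widetilde H=\dim H$ and the dictionary between $\T_x$ and $\C_x(\k)$ recalled above are used.

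Finally, part $(3)$ follows formally from part $(2)$. For a type-$2$ point $x$ that is not a zero or pole of $f$, the reduction $\widetilde f_x$ is a nonzero rational function on the proper smooth curve $\C_x$, so its divisor has degree zero; combining this with part $(2)$ and the bijection $\nu\leftrightarrow x^\nu$ yields $\sum_{\nu\in\T_x}d_\nu F(x)=\sum_{x^\nu\in\C_x}\ord_{x^\nu}(\widetilde f_x)=\deg\div(\widetilde f_x)=0$, which is harmonicity; only finitely many slopes are nonzero because $\widetilde f_x$ has finitely many zeros and poles. At a type-$3$ point, where $|\T_x|=2$, harmonicity is the statement that $F$ passes through with opposite slopes along the two tangent directions, which is immediate from the linearity established on annuli.
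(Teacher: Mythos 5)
A preliminary remark: the paper itself contains no proof of Theorem~\ref{thm:PL}; it is quoted as background, with the proof deferred to \cite{BPR}, \cite{BL} and \cite{Thuillier}. So your proposal can only be measured against the standard proofs in those references, and your route is indeed the same one as in \cite{BPR}: semistable decomposition, factorization of invertible functions on annuli (your ``$|u|$ identically $|c|$'' should read $|u|\equiv 1$), constancy of $|f|$ on zero/pole-free balls, and harmonicity at type-2 points from $\deg\div(\widetilde f_x)=0$ on the proper curve $\C_x$. One formulation problem first: your opening reduction --- refine $V$ so that ``each open ball and each open annulus of $X^{\an}\setminus V$ contains no zero or pole of $f$ in its interior'' --- is literally impossible, since zeros and poles are type-1 points and every type-1 point lies in some ball or annulus of the decomposition. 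What you need (and implicitly use later, when you write ``annulus (or punctured ball)'') is the decomposition adapted to the marked curve $\bigl(X,\mathrm{supp}\,\div(f)\bigr)$: every annulus zero/pole-free, and every zero or pole alone in its own open ball, treated as a punctured ball. This isolation is not cosmetic but is exactly what makes part $(4)$ true: if two points of $\mathrm{supp}\,\div(f)$ share a ball with end $y$, the rays from them to $y$ represent the \emph{same} tangent direction $\nu\in\T_y$, and $d_\nu F(y)$ is the sum of the two orders. For instance, for $X=\PP^1$, $V=\{\zeta\}$ the Gauss point and $f=T/(T-a)$ with $0<|a|<1$, both the zero and the pole retract to $\zeta$, yet $|f|\equiv 1$ near $\zeta$ in the direction of the ball containing them, so $d_\nu F(\zeta)=0\neq \ord_0(f)$; one must read the hypothesis of the theorem as in \cite{BPR} (a semistable vertex set of the marked curve), and your refinement step is then the correct move.

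The genuine gap is part $(2)$, which you yourself call the crux and then leave unproved; since you derive $(3)$ from $(2)$, the gap propagates. Concretely, three things are missing. (i) Existence of $T$ with $|T|_x=1$ whose reduction is a uniformizer at $x^\nu$: this is fine, because completion does not change the residue field, so the residue field of $\bigl(\K(X),|\cdot|_x\bigr)$ is already all of $\widetilde{\H(x)}\cong\k(\C_x)$ and uniformizers lift. (ii) The statement that if $\widetilde g$ has neither zero nor pole at $x^\nu$ then $|g|$ is constant on the germ of the direction $\nu$: this is the reduction dictionary --- for an affinoid neighborhood of $x$ with Shilov boundary $\{x\}$, the component in the direction $\nu$ reduces to $x^\nu$, and $|g|<|g|_x$ somewhere on it if and only if $\widetilde g(x^\nu)=0$. (iii) The statement that $-\log|T|$ has slope exactly $1$ along $\nu$, i.e., that the germ of $\nu$ is an annulus (or ball) on which $T$ restricts to a degree-one coordinate: this is precisely where the structure theory of \cite{BL} (or Thuillier's Poincar\'e--Lelong formula) enters, converting a simple zero of $\widetilde T$ at $x^\nu$ into an isomorphism of the germ with a standard annulus. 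Granting (i)--(iii) one writes $c^{-1}f=T^m h$ with $m=\ord_{x^\nu}(\widetilde f_x)$ and $\ord_{x^\nu}(\widetilde h)=0$, and gets $d_\nu F(x)=m\cdot 1+0=m$. As written, your ``key point'' that $\widetilde{c^{-1}f}$ is a scalar times the $m$-th power of a uniformizer merely restates the definition of $m$; and the two facts you invoke (the equality $\dim\widetilde H=\dim H$ of Lemma~\ref{lem:dimensionreduction}, and the bijection $\T_x\leftrightarrow\C_x(\k)$) do not by themselves supply (ii) or (iii) --- the first plays no role in this theorem at all. So the proposal, while following the right route, does not yet constitute a proof of $(2)$ or $(3)$.
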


To each nonzero rational function $f$ on $X$ and each semistable vertex set $V$ for $X$, 
one associates a corresponding rational function $F = -\log|f|$ on the skeleton $\Gamma$.

As an application of Theorem~\ref{thm:PL}, we obtain the following~\cite{AB, baker}: 
For every nonzero rational function $f$ on $X$,
\[
\tau_*(\div(f)) =  \div(F).
\]

Here $\tau_*$ is the 
specialization map on divisors from curves to metric graphs induced from the retraction map 
$\tau$, 
and coincides with the reduction map in~\cite{CR, Zhang}.

 \subsection{Zhang's measure}\label{sec:zhang}
 
 Let $\Gamma$ be a metric graph with a simple
 graph model $G = (V,E)$. Each edge $e$ in $E$ has a length $\ell_e>0$.  
 For any point $x$ of $\Gamma$, we denote by $\T_x(\Gamma)$, 
the set of all outgoing unit tangent vectors to $\Gamma$ at $x$. The
directional derivative of $f$ at any point $x$ of $\Gamma$ 
along a tangent vector $\nu \in \T_x(\Gamma)$ is denoted by $d_{\nu}f( x)$

The space of piecewise smooth function on $\Gamma$ is denoted by $S(\Gamma)$. 
The metric graph $\Gamma$ has a natural Lebesgue measure denoted by $d\ta$. 
The Laplacian of $\Gamma$ is the (measure valued) operator 
$\Delta$ on $\Gamma$ which to a function $f\in S(\Gamma)$ associates the measure 

$$\Delta(f):= - f''d\ta -\sum_{p} \sigma_p \delta_p,$$
where $d\ta$ is the Lebesgue measure on $\Gamma$,  $\delta_p$ is the
Dirac measure at $p$, and $\sigma_p$ is the sum of the directional
derivatives of $f$ along tangent directions in $\T_p(\Gamma)$:
$$\sigma_p = \sum_{\nu\in \T_p(\Gamma)} d_{\nu}f( p ).$$

 Consider a measure $\mu$ on $\Gamma$ of total mass one. Fix a point $x\in \Gamma$ and
 consider the following Laplace equation
 \begin{equation}\label{eq:laplace}
 \Delta_y \, g(x,y) = \delta_x -\mu\,,
 \end{equation}
with uniformizing condition $\int_\Gamma g(x,y)\,d\mu(y) = 0$. It has a unique solution denoted by $g_{\mu}(x,y)$.
In addition, $g_\mu(x,y)$ has the following explicit representation in terms of the Green function $g_z(x,y)$ on $\Gamma$. Recall first that 
$g_z(x,y)$ is the unique solution to the equation 
\[\Delta_yg_z(x,y) = \delta_x - \delta_z,\]
under the condition $g_z(x,z)=0.$ Then we have $g_\mu(x,y) = \int_{\Gamma}g_z(x,y)d\mu(z)$. We refer to~\cite{BR} for more details.

\subsubsection{Canonical admissible measure $\mu_{\ad}$}
Let $D$ be a divisor and $\mu$ be a measure on $\Gamma$. We use the conventional notation that 
$g_\mu(D,y) = \sum_{p\in \Gamma} D(p)g_\mu(p,y)$.

The following theorem is proved by Zhang~\cite{Zhang}, and is a generalization of a theorem of 
Chinburg-Rumley~\cite{CR} for $D=0$ to any divisor of degree $\deg(D)\neq -2$.

\begin{thm}[Zhang~\cite{Zhang}]
Let $D$ be a divisor of degree different from $-2$ on a metric graph $\Gamma$ of positive genus. 
There is a measure $\mu_D$ and a constant $c_D$ 
such that for any point $x$ of $\Gamma$, one has 
\[c_D + g_{\mu_D}(D,x)+ g_\mu(x,x)=0.\]
In addition, the pair $(\mu_D, c_D)$ is  unique. 
\end{thm}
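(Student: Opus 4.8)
The plan is to convert the self-consistency equation, in which the unknown $\mu_D$ occurs both as the measure defining the Green's function and inside the expression to be made constant, into a single \emph{linear} equation for an auxiliary potential, which can then be solved precisely because $\deg(D)\neq -2$. First I would fix once and for all a reference probability measure $\mu_0$ on $\Gamma$ (say the normalized Lebesgue measure $\tfrac1\ell\,d\ta$) and write $g_0:=g_{\mu_0}$ for its Green's function. For an arbitrary measure $\mu$ of total mass one, comparing $\Delta_y g_\mu(x,y)=\delta_x-\mu$ with $\Delta_y g_0(x,y)=\delta_x-\mu_0$ gives $\Delta_y\big(g_\mu-g_0\big)(x,y)=\mu_0-\mu$, a measure independent of $x$. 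Hence there are a function $\phi=\phi_\mu$ with $\Delta\phi=\mu_0-\mu$ and a constant $c_1$ such that, using the symmetry of $g_\mu$ and $g_0$,
\[
g_\mu(x,y)=g_0(x,y)+\phi(x)+\phi(y)+c_1 .
\]
This sets up a bijection (modulo additive constants in $\phi$) between mass-one measures $\mu$ and potentials $\phi$, so the unknown $\mu_D$ becomes an unknown $\phi$.

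Next I would substitute this formula into $g_\mu(D,x)+g_\mu(x,x)$, using $g_\mu(D,x)=\sum_p D(p)g_\mu(p,x)$. All terms depending on $x$ collect into
\[
g_0(D,x)+g_0(x,x)+\big(\deg(D)+2\big)\,\phi(x),
\]
the remaining contributions being constant in $x$. Thus $c_D+g_{\mu_D}(D,x)+g_{\mu_D}(x,x)=0$ for all $x$ if and only if the displayed function is constant. Since $\deg(D)+2\neq 0$ by hypothesis, this determines $\phi$ uniquely up to an additive constant,
\[
\phi(x)=\frac{1}{\deg(D)+2}\Big(\mathrm{const}-g_0(D,x)-g_0(x,x)\Big),
\]
and I would then \emph{define} $\mu_D:=\mu_0-\Delta\phi$, which is insensitive to the additive constant in $\phi$. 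Its total mass is $\int_\Gamma\mu_0-\int_\Gamma\Delta\phi=1-0=1$, as required, and $c_D$ is then forced by the normalization $\int_\Gamma g_{\mu_D}(x,y)\,d\mu_D(y)=0$ together with the constant value above.

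For uniqueness I would run the reduction backwards: any solution $(\mu,c)$ produces via the comparison formula a potential $\phi_\mu$ for which $g_0(D,x)+g_0(x,x)+(\deg(D)+2)\phi_\mu(x)$ is constant; as $\deg(D)+2\neq0$ this forces $\phi_\mu$ to coincide with the $\phi$ above up to a constant, whence $\mu=\mu_0-\Delta\phi_\mu=\mu_D$ and then $c=c_D$. The one point demanding genuine care -- and what I expect to be the main obstacle -- is regularity: I must know that the diagonal function $x\mapsto g_0(x,x)$ and the function $x\mapsto g_0(D,x)$ lie in $S(\Gamma)$, so that $\Delta\phi$ is an honest finite measure and the manipulations are legitimate. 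On a finite metric graph these functions are continuous and piecewise polynomial (piecewise quadratic for the Lebesgue reference measure), which follows from the explicit description of Green's functions through the network Laplacian; this finiteness of the diagonal is exactly the feature distinguishing the graph setting from the Archimedean one, where the diagonal of the Green's function diverges.
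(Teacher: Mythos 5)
The paper gives no proof of this theorem at all---it is quoted directly from Zhang's \emph{Admissible pairing on a curve}---and your argument is correct and essentially reproduces Zhang's own: trade the unknown measure for a potential via a reference measure, writing $g_\mu(x,y)=g_0(x,y)+\phi(x)+\phi(y)+c_1$ with $\Delta\phi=\mu_0-\mu$, so that admissibility becomes the linear condition that $g_0(D,x)+g_0(x,x)+\bigl(\deg(D)+2\bigr)\phi(x)$ be constant, which is solvable and rigid precisely because $\deg(D)\neq-2$; your regularity remark (piecewise-polynomial Green's functions on a finite metric graph, so $\Delta\phi$ is an honest finite measure) is the right thing to check. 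Two small points to flag: the displayed equation in the statement has a typo ($g_\mu(x,x)$ should read $g_{\mu_D}(x,x)$, as you correctly interpreted), and your comparison formula uses the symmetry $g_\mu(x,y)=g_\mu(y,x)$ together with uniqueness up to constants for $\Delta u=\mu_0-\mu$, both standard facts (e.g.\ in Zhang or Baker--Rumely) that should be cited rather than taken silently.
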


Let now $X$ be a smooth proper connected curve on $\K$, and $\Gamma$ 
a skeleton of $X^{\an}$ associated to a semistable vertex set $V$. 
The canonical divisor $K_X$ on $\Gamma$ is the divisor 
\[K_X:= \sum_{x\in \Gamma} \bigl(\,2g_x-2+\val(x)\bigr) (x).\]
Note that $\deg(K_X) = 2g(X)-2$. The metric graph $\Gamma$ is an augmented metric graph in the 
terminology of~\cite{ABBR}, and 
$K_X$ is equal to the sum  $K_\Gamma + 2K_g$, where $K_\Gamma$ is the canonical divisor of the
(unaugmented) metric graph 
$\Gamma$, and $K_g$ is the divisor associated to the genus function 
$g(\cdot)$ defined on the points of $\Gamma$.
The measure associated to the canonical divisor $K_{X}$ of the augmented metric graph 
$\Gamma$ is called the canonical admissible measure and is denoted by $\mu_{\ad}$. 
While not needed in the sequel, we note that the measure $\mu_0$ (associated to $D=0$) 
is the canonical measure of~\cite{CR}. 

The canonical admissible measure $\mu$ has the following explicit form~\cite{Zhang}. For any divisor $D$ we denote by $\delta_D$ the measure 
$\sum_{p\in \Gamma} D(p)\delta_p$ of total mass $\deg(D)$.
\begin{thm}[Zhang~\cite{Zhang}]
Notations as above, we have
\[\mu_{\ad} = \frac 1{g} \delta_{K_g} + \frac 1g \sum_{e\in E} \frac {1}{\ell_e+\rho_e}d\ta,\]
where $g$ is the genus of $X$, and for any edge $e$ in $E$ (the set of edges of the graph model of $\Gamma$), $\rho_e$ denotes 
the effective resistance between the two end points of $e$ in $\Gamma \setminus e$. 
\end{thm}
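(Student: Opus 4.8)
The plan is to identify $\mu_{\ad}$ as the admissible measure attached to the divisor $K_X$ and to compute it explicitly through the Laplacian characterization, using two inputs from the theory of Green's functions on metric graphs. The first is the diagonal identity
\[\Delta_x\, g_\mu(x,x) = 2\mu_0 - 2\mu,\]
valid for every probability measure $\mu$ on $\Gamma$, where $\mu_0$ is the canonical (Chinburg--Rumley) measure, i.e. the admissible measure for $D=0$, characterized by $g_{\mu_0}(x,x)$ being constant. The second is the explicit form of $\mu_0$,
\[\mu_0 = \sum_{e\in E}\frac{1}{\ell_e+\rho_e}\,d\ta \;-\; \frac 12\sum_{x\in\Gamma}\bigl(\val(x)-2\bigr)(x).\]
The diagonal identity I would deduce from the representation $g_\mu(x,y)=\int_\Gamma g_z(x,y)\,d\mu(z)$ recalled above together with the fact that $g_z(x,x)=r(x,z)$ is the resistance function, whose $x$-Laplacian is $2\mu_0-2\delta_z$; integrating this against $d\mu(z)$ gives the claim (see~\cite{BR}).

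Granting these, the first step is to apply $\Delta_x$ to the defining relation $c_{K_X}+g_{\mu_{\ad}}(K_X,x)+g_{\mu_{\ad}}(x,x)=0$. Since $\Delta_x g_{\mu_{\ad}}(p,x)=\delta_p-\mu_{\ad}$ for each $p$, summing against $K_X$ gives $\Delta_x g_{\mu_{\ad}}(K_X,x)=\delta_{K_X}-\deg(K_X)\,\mu_{\ad}$; combining this with the diagonal identity, the constant $c_{K_X}$ drops out and, because $\deg(K_X)=2g-2$, I obtain
\[\delta_{K_X}-(2g-2)\mu_{\ad}+2\mu_0-2\mu_{\ad}=0,\qquad\text{hence}\qquad \mu_{\ad}=\frac{1}{2g}\bigl(\delta_{K_X}+2\mu_0\bigr).\]
(The same computation for arbitrary $D$ yields $\mu_D=\frac{1}{\deg(D)+2}(\delta_D+2\mu_0)$, which also explains the exclusion $\deg(D)\neq -2$.)

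The second step is pure bookkeeping. Writing $K_X=K_\Gamma+2K_g$ with $K_\Gamma=\sum_x(\val(x)-2)(x)$, so that $\delta_{K_X}=\delta_{K_\Gamma}+2\delta_{K_g}$, and substituting the explicit form of $\mu_0$, the atomic term $\frac{1}{2g}\delta_{K_\Gamma}$ coming from $\frac{1}{2g}\delta_{K_X}$ cancels exactly the term $-\frac{1}{2g}\delta_{K_\Gamma}$ coming from $\frac1g\mu_0$, and I am left with
\[\mu_{\ad}=\frac 1g\,\delta_{K_g}+\frac 1g\sum_{e\in E}\frac{1}{\ell_e+\rho_e}\,d\ta,\]
as claimed; the uniqueness part of Zhang's theorem then pins it down. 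As a consistency check the total mass is one: the continuous part integrates to $\sum_e \ell_e/(\ell_e+\rho_e)$, which equals the first Betti number $g(\Gamma)=|E|-|V|+1$ by Foster's network theorem (using $R_e^{\mathrm{eff}}=\ell_e\rho_e/(\ell_e+\rho_e)$), while $\delta_{K_g}$ contributes $\sum_x g_x$, and $g(\Gamma)+\sum_x g_x=g$.

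The hard part, and the only non-formal input, is the explicit form of the canonical measure $\mu_0$. Establishing it is an edge-by-edge computation with the resistance function: on the interior of an edge $e$ one reduces the network by replacing everything outside $e$ by a single resistor of resistance $\rho_e$ joining the two endpoints of $e$, computes $-\tfrac{d^2}{dt^2}r(t,z)$ along $e$, and finds the constant density $1/(\ell_e+\rho_e)$; the atomic corrections $-\tfrac12(\val(x)-2)$ then arise from the jumps of the outgoing slopes of the resistance potential at the vertices. This is exactly the content of the Chinburg--Rumley / Zhang computation, which I would either cite or reproduce in the present normalization.
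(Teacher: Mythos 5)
The first thing to note is that the paper contains no proof of this statement: it is recalled from Zhang~\cite{Zhang} (with the $D=0$ case credited to Chinburg--Rumely~\cite{CR}), so there is no internal argument to compare yours against. Your derivation is correct, and it is essentially the standard one --- indeed close to Zhang's original: granting the existence/uniqueness theorem stated just before, apply $\Delta_x$ to the defining relation $c_{K_X}+g_{\mu_{\ad}}(K_X,x)+g_{\mu_{\ad}}(x,x)=0$, use $\Delta_x g_{\mu}(p,x)=\delta_p-\mu$ together with the diagonal identity $\Delta_x g_\mu(x,x)=2\mu_0-2\mu$ to get $2g\,\mu_{\ad}=\delta_{K_X}+2\mu_0$, then substitute $K_X=K_\Gamma+2K_g$ and the Chinburg--Rumely formula $\mu_0=\sum_{e\in E}\frac{1}{\ell_e+\rho_e}\,d\ta-\frac 12\delta_{K_\Gamma}$, so that the $\delta_{K_\Gamma}$ terms cancel. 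The individual steps check out: $g_z(x,x)=r(x,z)$ is the resistance function, the identity $\Delta_x r(x,z)=2\mu_0-2\delta_z$ is precisely the Chinburg--Rumely theorem, your intermediate formula $\mu_D=\frac{1}{\deg(D)+2}\bigl(\delta_D+2\mu_0\bigr)$ is Zhang's general expression (and correctly explains the exclusion $\deg(D)\neq -2$), and the mass count via Foster's theorem and the genus formula $g=g(\Gamma)+\sum_x g_x$ is right. Two caveats. First, as you yourself concede, the entire analytic content sits in the explicit form of $\mu_0$ (density $1/(\ell_e+\rho_e)$ on edges, atoms $-\frac12(\val(x)-2)$ at vertices); since you cite this rather than prove it, what you have is a clean reduction of Zhang's formula to the Chinburg--Rumely computation, not a self-contained proof --- which is acceptable here, since \cite{CR} is exactly the reference the paper gives for the $D=0$ case. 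Second, the representation $g_\mu(x,y)=\int_\Gamma g_z(x,y)\,d\mu(z)$ that you quote from the paper is only valid up to an additive constant (it need not satisfy the normalization $\int_\Gamma g_\mu(x,y)\,d\mu(y)=0$), and the interchange of $\Delta_x$ with $\int\cdot\,d\mu(z)$ deserves a word of justification; neither issue affects your argument, because only Laplacians of these functions enter the computation and additive constants are killed by $\Delta$.
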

Here the metric graph $\Gamma \setminus e$ is defined by removing
the interior of the edge $e$  from $\Gamma$. 
The effective resistance $\rho_e$ in $\Gamma \setminus e$ 
between the two points $u$ and $v$ of the edge $e=\{u,v\}$ 
is formally defined as $\rho_e = g^e_u(v,v)$, where $g^e_z(z,y)$ is 
the Green function on $\Gamma \setminus e$, as described above. 

 Note that $\rho_e = \infty$ if $\Gamma \setminus e$ is disconnected, and is finite otherwise. 
 In particular, $\mu_{\ad}$ does not have support on the interior of any bridge edge of $\Gamma$ (in that case, 
 $\frac{\ell_e}{\ell_e+\rho_e} =0$). This shows that 
 the measure $\iota_*(\mu_{\ad})$ 
 is a well-defined measure on 
 $X^{\an}$ independent of the choice of a skeleton $\Gamma$ of $X^{\an}$, 
 where $\iota: \Gamma \hookrightarrow X^{\an}$ denotes the natural inclusion of $\Gamma$ in $X^{\an}$.

\section{Limit linear series}~\label{sec:lls}
In this section, we discuss the framework of limit linear series from~\cite{A}, 
and  explain how reduction of 
rational functions gives rise to limit linear series in this framework. 
The interested reader is referred to~\cite{A, AB} for more details on relevant materials.

\subsection{Slope structures and linear series on metric graphs}\label{sec:llsmg}

\subsubsection{Rank functions on hypercubes} Denote by $[r]$ the set of integers $0,\dots, r$. 
The hypercube $\Box^d_r$ of dimension $d$ and width $r$ 
is the product $[r]^d$. An element of $\Box^d_r$ denoted by $\underline i$
is thus an integral vector $\underline i = (i_1,\dots, i_d)$ with 
$0\leq i_1,\dots, i_d\leq r$. We denote by $\underline 0$ and $\underline e_m$, $1\leq m \leq d$, 
the origin $\underline 0 = (0,\dots,0)$ and 
the vector whose coordinates are all zero expect the $m$-th coordinate which is equal to one, respectively.

The partial order on $\Box^d_r$ is denoted by $\leq$ and is defined by 
$\underline i \leq \underline j$ if for any $m$, $i_m \leq j_m$. 
There is a lattice structure on $\Box^d_r$
induced by the two operations $\vee$ and $\wedge$ defined by taking the 
maximum and the minimum coordinate-wise, respectively: 
for $\underline i$ and $\underline j$ in $\Box_r^d$, we have 
$$\underline i \vee \underline j = (\max(i_1,j_1),\dots, \max(i_d,j_d)) \qquad \underline i \wedge \underline j = (\min(i_1,j_1),\dots, \min(i_d,j_d)).$$

A function $f: \Box^d_r \rightarrow \mathbb Z$ is called supermodular if for any two elements $\underline i$ and 
$\underline j$, one has
\[f(\underline i) + f(\underline j) \leq f(\underline i\vee \underline j) + f(\underline i\wedge \underline j) .\]
A rank function $\rho : \Box^d_r\rightarrow \mathbb Z$ is a supermodular function satisfying 
the following conditions:
\begin{itemize}
 \item The values of $\rho$ are in the set $[r] \cup \{-1\}$.
 \item $\rho$ is decreasing, i.e., if $\underline i \leq \underline j$, then $\rho(\underline j) \leq \rho(\underline i)$.
 \item $\rho(\underline 0) = r$, and for any 
 $1 \leq m \leq d $, $\rho(\underline e_m) = r-1$. 
\end{itemize}

Applying the supermodularity of $\rho$, one easily sees that 
$\rho(\underline i + \underline e_m) \geq \rho(\underline i) -1 $. 
For a rank function $\rho$ on $\Box^d_r$, we define the set of jumps $J_\rho$ of $\rho$ as follows:
A point $\underline i$ belongs to $J_\rho$, if $\rho(\underline i) \geq 0$ and 
for any $1\leq m \leq d$ 
one has $\rho(\underline i + \underline e_m) = \rho(\underline i)-1$, 
whenever $\underline i + \underline e_m$ belongs  to $\Box^d_r$.
Note that by using the the monotonicity of $\rho$, the data of $J_\rho$ completely determines the rank function.

\medskip

 The function $\rho : \Box_r^d \rightarrow \mathbb Z$ defined by 
\[\rho(i_1,\dots, i_d) = \max\{-1, r- i_1-\dots-i_d\}\] 
is a rank function. 
The set of jumps of $\rho$ consists of all the points $\underline i = (i_1,\dots,i_d)$ of $\Box^d_r$ 
such that $\sum_m i_m \leq r$.
This is called the \emph{standard rank} function.

\subsubsection{Filtered vector spaces and the induced rank functions} \label{sec:filteredrank}
Let $H$ be a vector space of dimension $r+1$ over $\k$. Suppose we are given $d$ 
decreasing filtrations 
$\mathcal F_{\bullet}^{(m)}$, indexed by $1\leq m\leq d$:
\[\mathcal F^{(m)}_0H = H \supsetneq \mathcal F^{(m)}_1H \supsetneq \dots \supsetneq 
\mathcal F_r^{(m)}H =0.\]
The function $\rho:\Box^d_r \rightarrow \mathbb Z$ defined by 
\[\rho(i_1,\dots, i_d):= \dim_\k \bigl(\,
\mathcal F^{(1)}_{i_1}H \cap \dots \cap \mathcal F^{(d)}_{i_d}H\,\bigr) -1,\]
is a rank function.

A typical example is obtained by a set of $d$ distinct
points $x_1,\dots, x_d$ on a smooth proper curve $C$ over $\k$.  For $H \subset \k(C)$ a vector space of rational functions of dimension $r+1$ over $\k$,  
each point $x_m$ provides a decreasing filtration 
$\mathcal F^{(m)}_\bullet H$ as above by looking at the orders of vanishing of 
functions in $H$ at $x_m$, so $\mathcal F^{(m)}_j$ is the vector space of all rational 
functions in $H$ with an order of vanishing at $x_i$ among the first $r+1-j$ possible values in a decreasing order.

\subsubsection{Slope structures of order $r$ on graphs and metric graphs}
Let first $G=(V,E)$ be  a simple graph. We denote by $A$ the set of all 
the oriented edges (arcs) $uv$ for any edge $\{u,v\}$ in $E$ (so both $uv$ and $vu$ belong to $A$). 
For a vertex $v\in V$, we denote by $A_v\subset A$ the set of all the oriented edges 
$vu\in \vec E$ (i.e., 
$\{v,u\} \in E$).  

A slope structure 
$\mathfrak S = \Bigl\{\,S^v; S^e\Bigr\}_{v\in {V}, e\in A}$ of order $r$ on $G$,
or simply an 
$r$-slope 
structure, is  the data of

\begin{itemize}
 \item For any oriented edge $e=uv\in A$ of $G$, a set of integers $s_0^e<s_1^e<\dots<s_r^e$, 
 subject to the requirement that 
 $s^{uv}_i + s^{vu}_{r-i} =0$ for any (unoriented) edge $\{u,v\}$ in $G$. 
 We denote $S^e = \{s_i^e\,|\,i=0,\dots,r\}$.  
 \item For any vertex $v$ of $G$, a rank function $\rho_v$ on $\Box_r^{\val(v)}$. 
 If $J_{\rho_v}$ denotes the set of jumps of $\rho_v$,
 we denote by 
 $S^v \subseteq \prod_{e\in A_v} S^e$, the set of all points $s_{\underline i}$ 
 for $\underline i \in J_{\rho_v}$. 
 Here, for a point $\underline i=(i_e)_{e\in A_v}$ of the hypercube, 
 the point $s_{\underline i}\in \prod_{e\in A_v} S^e $ denotes the 
 point in the product which has coordinate at $e\in A_v$ equal to $s^e_{i_e}$.
\end{itemize}

\medskip

Let now $\Gamma$ be a metric graph. By an $r$-slope structure on $\Gamma$ we mean an 
$r$-slope structure $\mathfrak S$ on a simple graph model $G=(V,E)$ of $\Gamma$.  
We enrich this slope structure on any point of $\Gamma$ as follows. 
For $ x\in \Gamma$, denote by $\T_x(\Gamma)$ the set of all the $\val(x)$ (out-going) tangent vectors to $\Gamma$ at $x$. 
For any point $x$ and $\nu\in \T_x(\Gamma)$, there exists a unique oriented edge $uv$ of $G$ which is parallel to $\nu$. Define 
$S^\nu = S^{uv}$. Also for any point $x\in \Gamma \setminus V$ in the interior of an edge $\{u,v\}$, 
define  $\rho_x$ to be the standard rank function on $\Box^2_r$. In particular, 
$S^x \subseteq S^{uv}\times S^{vu}$ can be identified with
the set of all pairs $(s^{uv}_i, s^{vu}_j)$ with $i+j\leq r$. The resulting slope structure is denoted by $\Bigl\{\,S^x;S^{\nu}\,\Bigr\}_{x\in \Gamma, \nu\in \T_x(\Gamma)}$.

\subsubsection{Rational functions on a metric graph compatible with a slope structure}
Let $\Gamma$ be a metric graph and let 
$\mathfrak S = \Bigl\{S^x; S^\nu\Bigr\}_{x\in \Gamma, \nu \in \T_x(\Gamma)}$ be a 
slope structure of order $r$ on $\Gamma$. 
A continuous piecewise affine function $f:\Gamma \rightarrow \mathbb R$ is said to be 
compatible with $\mathfrak S$ if the  two following natural conditions are verified:
\begin{itemize}
 \item[$(i)$] for any point $x\in \Gamma$ and any tangent direction $\nu\in \T_x(\Gamma)$, the outgoing slope 
 of $f$ along $\nu$ lies in $S^\nu$.
 \end{itemize}
 Denote by $\delta_x(f)$ the vector in $\prod_{\nu \in \T_x(\Gamma)} S^\nu$ which consists of outgoing slopes of 
 $f$ along $\nu \in \T_x(\Gamma)$. Then
 \begin{itemize}
 \item[$(ii)$] for any point $x$ in $\Gamma$, the vector $\delta_x(f)$ belongs to $S^x$.
\end{itemize}
The space of rational functions on $\Gamma$ compatible with $\mathfrak S$ is denoted by
$\mathrm{Rat}(\Gamma;\mathfrak S)$, or simply $\mathrm{Rat}(\mathfrak S)$ if there is no risk of confusion.

For any rational function $f$ on $\Gamma$, the corresponding principal divisor is denoted by 
\[\div(f) = \sum_{x}  \div_x(f) (x), \qquad \div_x(f) := - \sum_{\nu \in \T_x(\Gamma)} \mathrm{slope}_{\nu}(f). \]
Note that there is a sign difference between our definition of the 
divisor of a rational function and  that of~\cite{AB}.

\subsubsection{Linear equivalence of slope structures}
Two slope structures on $\mathfrak S_1 =\Bigl\{\,S_1^x;S^\nu_1\,\Bigr\}_{x\in \Gamma, \nu\in \T_x(\Gamma)}$ 
and 
$\mathfrak S_2=\Bigl\{\,S_1^x;S^\nu_1\,\Bigr\}_{x\in \Gamma, \nu\in \T_x(\Gamma)}$ $\mathfrak S_2$ on a metric graph $\Gamma$
are said to be linearly equivalent, written $\mathfrak S_1 \simeq \mathfrak S_2$, if there exists a rational function $f$ on $\Gamma$ 
 such that for any point $x$ of $\Gamma$ and any $\nu \in \T_x(\Gamma)$, we have $S^\nu_{1} = S^\nu_{2} - 
 \mathrm{slope}_\nu(f)$, and $S^x_{1} = S^x_{2}-\delta_x(f)$. In this case, we write $\mathfrak S_1 = \mathfrak S_2 + \div(f)$.

We extend the definition of linear equivalence between slope structures to 
pairs $(D, \mathfrak S)$ with $D$ a divisor of degree $d$ 
and $\mathfrak S$ an $r$-slope structure on $\Gamma$ by declaring that 
$(D_1,\mathfrak S_1) \simeq (D_2, \mathfrak S_2)$ if there exists a rational function $f$ 
on $\Gamma$ such that 
$D_1 =  D_2 + \div(f)$ and $\mathfrak S_1 = \mathfrak S_2+ \div(f)$. 
\subsubsection{Linear series $\g^r_d$ on $\Gamma$} A $\g^r_d$ on $\Gamma$ is by definition the linear equivalence class 
of the data of a pair $(D, \mathfrak S)$ where $D$ is a divisor of degree 
$d$ on $\Gamma$ and $\mathfrak S$ is an $r$-slope structure on $\Gamma$ subject to the following
property:
\medskip

 $(*)$ For any effective divisor $E$ on $\Gamma$ of degree $r$, there exists a rational function 
 $f \in \mathrm{Rat}(\mathfrak S)$ such that 
 \begin{itemize}
  \item[(1)] For any point $x \in \Gamma$, $\rho_x(\delta_x(f)) \geq E(x)$; and in addition, 
  \item[(2)] $\div(f)+D -E \geq 0$.
 \end{itemize}

If $\mathfrak S$ comes from an $r$-slope structure on a graph model $G=(V, E)$ of $\Gamma$, then  
 for any point $x\in \Gamma \setminus V$ lying on an edge $\{u,v\}$, 
 the rank function $\rho_x$ is standard and the 
 above condition is equivalent to $i+ j \leq r- E(x)$, where $s^{uv}_i$ and $s^{vu}_j$ are the two slopes of $f$ at $x$. In particular, $\div_x(f) \geq 0$ automatically holds for any point $x$ 
 in the interior of an edge of the model $G$. 

\medskip

Let $(D, \mathfrak S)$ define a $\g^r_d$ on $\Gamma$. We denote by $\mathrm{Rat}(D; \mathfrak S)$ the space of 
all $f\in \mathrm{Rat}(\mathfrak S)$ with the property that $\div(f)+ D \geq 0$, and define 
the linear system $|(D,\mathfrak S)|$ associated to $(D, \mathfrak S)$  
as the space of all  effective divisors $E$ on $\Gamma$ of the form 
$\div(f)+D$ for some $f\in \mathrm{Rat}(D; \mathfrak S)$. Note that $|(D, \mathfrak S)|$ 
is independent of the choice of the pair $(D, \mathfrak S)$ in its linear equivalence class.
By an abuse of the notation, 
we refer to both $(D, \mathfrak S)$ and $|(D, \mathfrak S)|$ as a $\g^r_d$ on $\Gamma$.

\subsubsection{Reduced divisors}
Let $(D, \mathfrak S)$ be a $\g^r_d$ on $\Gamma$ and $v$ a point of $\Gamma$. 
The $v$-reduced divisor $D_v \in |(D, \mathfrak S)|$ is defined as follows. 
Define the rational function $f_v$ by 
\[f_v(x):=\min_{f\in \mathrm{Rat}(D;\mathfrak S)} f(x) - f(v)\]
for any point $x$ of $\Gamma$. (In particular, $f_v(v)=0$.) The function $f_v$ 
belongs to $\mathrm{Rat}(D; \mathfrak S)$. The (effective) divisor $D_v \in |(D,\mathfrak S)|$ 
defined by $D_v := D+ \div(f_v)$ is called 
the $v$-reduced divisor linearly equivalent to $D$ with respect to  $\mathfrak S$. 
We have the following useful proposition~\cite{A}.
\begin{prop}\label{prop:coef}
For any point $x\in \Gamma$, the coefficient of $x$ in $D_x$ is equal to 
$D(x) - \sum_{\nu\in \T_x(\Gamma)} s_0^\nu$.  In addition $D(x) - \sum_{\nu\in \T_x(\Gamma)} s_0^\nu \geq r$.  
\end{prop}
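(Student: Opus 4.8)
The plan is to compute the coefficient of $x$ in $D_x = D + \div(f_x)$ directly from the definition of the divisor of a rational function, and to identify the outgoing slope of $f_x$ at $x$ along each tangent direction. By definition $\div_x(f_x) = -\sum_{\nu\in\T_x(\Gamma)} \mathrm{slope}_\nu(f_x)$, so the coefficient of $x$ in $D_x$ is $D(x) - \sum_{\nu\in\T_x(\Gamma)} \mathrm{slope}_\nu(f_x)$. Thus the whole statement reduces to proving that $\mathrm{slope}_\nu(f_x) = s_0^\nu$ for every $\nu\in\T_x(\Gamma)$, together with the bound $D(x) - \sum_\nu s_0^\nu \geq r$.

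For the lower bound $\mathrm{slope}_\nu(f_x) \geq s_0^\nu$, I would use that $f_x \in \mathrm{Rat}(D;\mathfrak S) \subseteq \mathrm{Rat}(\mathfrak S)$ (as recorded before the statement, following~\cite{A}). Compatibility with $\mathfrak S$ forces the outgoing slope of $f_x$ along $\nu$ to lie in $S^\nu$, whose smallest element is $s_0^\nu$, giving the inequality at once.

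The reverse inequality is the heart of the argument and requires producing a single admissible function realizing the minimum slope simultaneously in every direction at $x$. I would apply the defining property $(*)$ of a $\g^r_d$ to the effective divisor $E = r\,(x)$ of degree $r$. This produces $g \in \mathrm{Rat}(\mathfrak S)$ with $\rho_x(\delta_x(g)) \geq r$ and $\div(g) + D - r\,(x) \geq 0$. Since $\rho_x$ takes values in $[r]\cup\{-1\}$, is decreasing, and satisfies $\rho_x(\underline 0) = r$ and $\rho_x(\underline e_m) = r-1$ for all $m$, the value $r$ is attained only at $\underline 0$; hence $\delta_x(g) = s_{\underline 0} = (s_0^\nu)_{\nu\in\T_x(\Gamma)}$, so $g$ has minimum slope $s_0^\nu$ along every $\nu$. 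Moreover $\div(g) + D \geq r\,(x) \geq 0$ shows $g \in \mathrm{Rat}(D;\mathfrak S)$. The minimality defining $f_x$ then gives $f_x(y) \leq g(y) - g(x)$ for all $y$; evaluating along the segment representing $\nu$ at small distance $t$ from $x$, where $g$ is affine with slope $s_0^\nu$, yields $f_x(y) \leq s_0^\nu t$ and hence $\mathrm{slope}_\nu(f_x) \leq s_0^\nu$. Combined with the lower bound, $\mathrm{slope}_\nu(f_x) = s_0^\nu$, which proves the formula for the coefficient.

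Finally, the inequality $D(x) - \sum_\nu s_0^\nu \geq r$ falls out of the same function $g$: comparing the coefficients of $x$ on both sides of $\div(g) + D \geq r\,(x)$ gives $D(x) + \div_x(g) = D(x) - \sum_{\nu\in\T_x(\Gamma)} s_0^\nu \geq r$. The only genuine subtlety is the step identifying $\delta_x(g)$ with the all-minimum slope vector $s_{\underline 0}$, but this is immediate from the rank-function axioms once property $(*)$ is invoked with $E = r\,(x)$; accordingly I expect no serious obstacle.
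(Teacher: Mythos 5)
Your proposal is correct. Note, however, that the paper itself does not prove this proposition: it is quoted from the reference \cite{A} (``Limit Linear Series I,'' in preparation), so there is no in-paper argument to compare against; your proof is a self-contained reconstruction. The key steps all check out: (a) the reduction to showing $\mathrm{slope}_\nu(f_x)=s_0^\nu$ is definitional, since the coefficient of $x$ in $D_x$ is $D(x)+\div_x(f_x)=D(x)-\sum_\nu \mathrm{slope}_\nu(f_x)$; (b) the lower bound follows from $f_x\in\mathrm{Rat}(D;\mathfrak S)\subseteq\mathrm{Rat}(\mathfrak S)$ and compatibility condition $(i)$; (c) invoking property $(*)$ with $E=r\,(x)$ and observing that the rank-function axioms (values in $[r]\cup\{-1\}$, monotonicity, $\rho_x(\underline e_m)=r-1$) force $\rho_x(\underline i)=r$ only at $\underline i=\underline 0$ correctly pins $\delta_x(g)$ to the all-minimum vector $s_{\underline 0}$ --- and this works uniformly at vertices and at edge-interior points, where $\rho_x$ is the standard rank function; (d) the minimality $f_x(y)\leq g(y)-g(x)$ then yields the matching upper bound on the slopes, and comparing coefficients at $x$ in $\div(g)+D\geq r\,(x)$ gives the inequality $D(x)-\sum_\nu s_0^\nu\geq r$. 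The only inputs you take for granted --- that the minimum defining $f_x$ exists and that $f_x\in\mathrm{Rat}(D;\mathfrak S)$ --- are exactly what the paper itself asserts (from \cite{A}) when defining reduced divisors, so relying on them is legitimate given that the statement presupposes $D_x$ is defined.
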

Further properties of reduced divisors are given in~\cite{A}.

\subsection{Limit linear series on the skeleton of a Berkovich curve}
Let now $X$ be a smooth proper curve $\K$. Let 
$\mathcal D$ be divisor of degree $ d$ on $X$, and  $(\mathcal O(\mathcal D),H)$ 
be a $\g^r_d$ on $X$. We identify $H$ with a subspace of 
 $ \K(X)$ of dimension $r+1$.  Let $\Gamma$ be a 
skeleton of $X^{\mathrm{an}}$. In this section, we define 
the reduction of $(\mathcal O(\mathcal D), H)$ to $\Gamma$ which will be a $\g^r_d$ on $\Gamma$ 
that we call the limit linear series on $\Gamma$ induced by $(\mathcal O(\mathcal D), H)$, 
or simply the limit $\g^r_d$. First we recall the following basic fact from~\cite{AB}.  
 \begin{lemma}\label{lem:dimensionreduction} Let $X$ be a smooth proper curve over $\K$, and $x\in X^{\mathrm{an}}$  
a point of type 2. The $\k$-vector space $H_x$ defined by the reduction  to $\widetilde{\H(x)}$ 
of an $(r+1)$-dimensional $\K$-subspace 
$H \subset \K(X)$  has dimension $r+1$. 
\end{lemma}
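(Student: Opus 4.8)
The plan is to realize the reduction map as the residue map of the valuation $|\cdot|_x$ and then compare the two dimensions by a linear-algebra argument over the valuation ring, with completeness of $\K$ entering only at the very end. Since $x$ is of type $2$, the multiplicative seminorm $|\cdot|_x$ is in fact a norm (a rank-one valuation) on $\K(X)$: its kernel is a proper ideal of the field $\K(X)$, hence $0$. It has valuation ring $\O_x$, maximal ideal $\mathfrak m_x$, and residue field $\O_x/\mathfrak m_x = \widetilde{\H(x)} = \k(\C_x)$, since a valued field and its completion share the same residue field. Writing $R$ for the valuation ring of $\K$ with maximal ideal $\mathfrak m$, I would set $L = \{f\in H : |f|_x\le 1\}$ and $L^\circ=\{f\in H: |f|_x<1\}$. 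These are $R$-submodules of $H$ with $\mathfrak m\, L\subseteq L^\circ$, so $L/L^\circ$ is a $\k$-vector space, and the residue map $\O_x\to\k(\C_x)$ induces a $\k$-linear isomorphism $L/L^\circ\cong\widetilde H$ (this simultaneously re-proves that $\widetilde H$ is a $\k$-vector space). It therefore suffices to show $\dim_\k L/L^\circ = \dim_\K H = r+1$.

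Next I would dispatch the easy inequality $\dim_\k\widetilde H\le r+1$. Take $f_0,\dots,f_m\in H$ with $|f_i|_x=1$ whose reductions $\widetilde f_i$ are $\k$-linearly independent. If $\sum_i b_i f_i=0$ with $b_i\in\K$ not all zero, rescale so that $\max_i|b_i|=1$; then all $b_i\in R$ and some $b_i$ is a unit. Applying the residue map (a ring homomorphism on $\O_x$) to the relation gives $\sum_i \overline{b_i}\,\widetilde f_i=0$ with some $\overline{b_i}\neq 0$, contradicting independence. Hence the $f_i$ are $\K$-linearly independent and $m+1\le\dim_\K H$.

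The heart of the matter is the reverse inequality $\dim_\k\widetilde H\ge r+1$, which I would prove by a Gram--Schmidt-type descent. Suppose $\dim_\k\widetilde H=s\le r$ and pick $f_0,\dots,f_{s-1}\in H$ of norm $1$ whose reductions form a $\k$-basis of $\widetilde H$; by the previous step they are $\K$-independent, so since $s<r+1$ there is $g\in H$ with $g\notin\mathrm{span}_\K(f_0,\dots,f_{s-1})$ and $|g|_x=1$. Because $\widetilde g\in\widetilde H$, I can subtract an $R$-linear combination $\sum_i c_i f_i$ (lifting the coordinates of $\widetilde g$) to make $|g-\sum_i c_i f_i|_x<1$, then divide by a scalar of that absolute value — available because $|\cdot|_x$ takes values in the divisible group $|\K^\times|$ and the valuation admits a section — and iterate. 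Iterating writes $g=\sum_i\gamma_i^{(n)}f_i+\rho_n$, where the remainders $\rho_n$ satisfy $|\rho_n|_x\to 0$ and the scalar partial sums $\gamma_i^{(n)}$ are Cauchy in $\K$. By completeness of $\K$ they converge to $\gamma_i\in\K$, whence $|g-\sum_i\gamma_i f_i|_x=0$, i.e. $g=\sum_i\gamma_i f_i$, contradicting the choice of $g$. Therefore $s=r+1$.

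I expect this last step to be the main obstacle: what one really wants is an orthogonal basis of $(H,|\cdot|_x)$, but over a field that is merely complete, and not necessarily spherically complete (as $\K$ may well fail to be), such bases need not exist for arbitrary ultrametric norms. The saving feature is that $|\cdot|_x$ is multiplicative with value group inside the divisible group $|\K^\times|$, which collapses the obstruction to the convergence of a scalar series in $\K$, where completeness suffices. The points I would verify carefully are that the remainder norms decay geometrically (so the coordinate series are genuinely Cauchy) and that the successive rescalings are legitimate through the chosen section of the valuation.
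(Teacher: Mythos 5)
Your reduction of the statement to comparing $\dim_\k \widetilde H$ with $\dim_\K H$, the identification $\widetilde H\cong L/L^\circ$, and the easy inequality $\dim_\k\widetilde H\le r+1$ (lifting a $\k$-dependence to a $\K$-dependence) are all correct, and the normalization $|g|_x\in|\K^\times|$ is indeed legitimate at a type 2 point. The gap is in the descent proving $\dim_\k\widetilde H\ge r+1$, and it is not the technicality you flag but a step that is provably false. Since $\K$ is complete, the finite-dimensional space $V=\mathrm{span}_\K(f_0,\dots,f_{s-1})$ is complete (equivalence of norms over a complete field), hence closed in $(\K(X),|\cdot|_x)$; as $g\notin V$ this gives $\delta:=\inf_{v\in V}|g-v|_x>0$. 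Every remainder in your iteration has the form $\rho_n=g-v_n$ with $v_n\in V$, so $|\rho_n|_x\ge\delta$ for all $n$: the norms $|\rho_n|_x$ strictly decrease but can never tend to $0$, and even if the coordinate sequences $\gamma_i^{(n)}$ happened to converge, the limit would satisfy $|g-\sum_i\gamma_i f_i|_x\ge\delta>0$. The argument is in this sense circular: the convergence you need is exactly what closedness of $V$ and $g\notin V$ forbid, so no contradiction is ever reached.

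Moreover, no repair is possible using only the ingredients you allow yourself (completeness of $\K$, divisibility and equality of value groups, a section of the valuation). A complete algebraically closed field such as $\mathbb{C}_p$ is not spherically complete, hence admits a proper \emph{immediate} extension $F$, i.e.\ one with $|F^\times|=|\K^\times|$ and $\widetilde F=\k$. For $\theta\in F\setminus\K$ and $H=\K+\K\theta$ one has $\dim_\K H=2$ while $\dim_\k\widetilde H=1$, and your iteration runs forever exactly as described, with $|\rho_n|_x$ strictly decreasing yet bounded below by the unattained distance $d(\theta,\K)>0$. So the conclusion genuinely requires the hypotheses your proof never uses: that $H$ consists of rational functions on a curve and that $x$ is of type 2, so that $\widetilde{\H(x)}=\k(\C_x)$ is a function field with many points to evaluate at. This is also why the paper does not reprove the lemma but quotes it from~\cite{AB}; any complete proof, including the one there, must bring in this geometry. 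Equivalently, the content of the lemma is that the infimum $\delta$ above is \emph{attained}, i.e.\ that $H$ admits an $|\cdot|_x$-orthogonal basis. One elementary route: induct on $r$; pick $f\in H$ with $|f|_x=1$, choose $\bar p\in\C_x(\k)$ avoiding the zeros and poles of $\tilde f$, the poles of a spanning set of $\widetilde H$, and the specializations of the finitely many poles of elements of $H$, and lift it to $p\in X(\K)$ specializing to $\bar p$. Then $H_p=\{h\in H: h(p)=0\}$ is a hyperplane in $H$, compatibility of reduction with specialization shows every element of $\widetilde{H_p}$ vanishes at $\bar p$ while $\tilde f(\bar p)\neq 0$, whence $\dim_\k\widetilde H\ge\dim_\k\widetilde{H_p}+1=r+1$ by induction. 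Your two correct steps (the isomorphism $\widetilde H\cong L/L^\circ$ and the easy inequality) are exactly what combine with such an argument to finish.
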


 By the slope formula, the reduction $F=-\log(|f|)$ 
 of any function $f\in H$ to $\Gamma$ is a piecewise affine function on 
 $\Gamma$ with integer slopes. Let first $x$ be a type II point of $ \Gamma$, 
 and $\nu$ a tangent direction in
 $\T_x(\Gamma)$. Denote by $x^\nu$ the point of $\C_x(\k)$ which corresponds to $\nu$.
 By Lemma~\ref{lem:dimensionreduction}, the dimension of $\widetilde H \subset \k(\C_x)$ 
 is $(r+1)$. The orders of vanishing
  of $\widetilde f\in \widetilde H$ at $x^\nu$ define a sequence 
  of integers $s_0^{\nu} < s_1^{\nu} < \dots <s_r^{\nu}$. Denote by $S^\nu = \{s_i^\nu\}$. 
 In addition, the collection of points $x^\nu\in \C_x(\k)$ for $\nu\in \T_x(\Gamma)$ 
 define a rank function $\rho_x$
 associated to the corresponding filtrations on 
 $\widetilde H$ as in Section~\ref{sec:filteredrank}. 
 We define $S^x$ as the set of jumps of $\rho_x$. The proof of the following theorem can be found in~\cite{A}. 
  \begin{thm}[Specialization of linear series] Let $(\mathcal O(\mathcal D),H)$, $H \subseteq H^0\bigl(X, \mathcal O(\mathcal D)\bigr) \subset 
  \K(X)$, be a $g^r_d$ on $X$. Let $\Gamma$ be a skeleton of $X^{\an}$.
  There exists a semistable vertex $V$ for $X$ such that $\Sigma(X,V) =\Gamma$, and such that 
  the slopes of rational functions $f$ in $H$ along edges in $\Gamma$ define a well-defined 
  $\g^r_d$ $(D, \mathfrak S)$
  on $\Gamma$, with $D = \tau_*(\mathcal D)$. 
 \end{thm}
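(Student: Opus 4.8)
The plan is to produce the pair $(D,\mathfrak S)$ explicitly from reductions and then verify, one axiom at a time, that it is a $\g^r_d$ on $\Gamma$ in the sense of Section~\ref{sec:llsmg}. First I would choose the vertex set. Starting from a semistable vertex set $V_0$ with $\Sigma(X,V_0)=\Gamma$, I enlarge $V_0$ to a set $V$ by adding type-II points in the interiors of edges (which leaves $\Gamma$ unchanged as a subset of $X^{\an}$) so that: (i) the zeros and poles of a basis $f_0,\dots,f_r$ of $H$ all retract to $V$, so that by the slope formula every $F=-\log|f|$ with $f\in H$ is linear on the edges of the model $G=(V,E)$; and (ii) every interior-of-edge point $x$ at which the reduction $\widetilde H_x\subset\k(\C_x)$ fails to be a monomial configuration is a vertex. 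The essential input is finiteness: the slopes of the finitely many $F_j=-\log|f_j|$ are piecewise linear with finitely many breakpoints along each edge, so only finitely many interior points are exceptional and can be absorbed into $V$; moreover, by the genus formula every interior point has $g_x=0$ and $\C_x\cong\PP^1$.

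Next I verify that $\mathfrak S$ is an $r$-slope structure. For a type-II point $x\in\Gamma$ and $\nu\in\T_x(\Gamma)$ with associated point $x^\nu\in\C_x(\k)$, set $S^\nu=\{s_0^\nu<\dots<s_r^\nu\}$ to be the vanishing sequence of $\widetilde H_x$ at $x^\nu$; by Lemma~\ref{lem:dimensionreduction} this has exactly $r+1$ terms, and by Theorem~\ref{thm:PL}(2) these are precisely the integer outgoing slopes along $\nu$ realized by the $-\log|f|$, $f\in H$. The antisymmetry $s_i^{uv}+s_{r-i}^{vu}=0$ follows from linearity of $-\log|f|$ on the edge $\{u,v\}$: the outgoing slope at $v$ is the negative of the one at $u$, so the two vanishing sequences are negatives of each other and reversing their order gives the identity. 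The rank function $\rho_v$ at a vertex is the one attached in Section~\ref{sec:filteredrank} to the filtrations of $\widetilde H_v$ by order of vanishing at the points $x^\nu$; supermodularity, monotonicity, and the boundary values $\rho_v(\underline 0)=r$ and $\rho_v(\underline e_m)=r-1$ are then automatic, and at interior-of-edge points the monomial shape of $\widetilde H_x$ forces $\rho_x$ to be standard, matching the enrichment prescribed for metric-graph slope structures. I would also record the fact, used below, that the reduction of any nonzero $f\in H$ lies in $\mathrm{Rat}(\mathfrak S)$: its outgoing slopes lie in the $S^\nu$ by construction, and since $\widetilde f_x$ has exact vanishing orders $s_{i_\nu}^\nu$, imposing one more order of vanishing in any direction strictly drops the dimension of the intersection of filtrations, so the slope vector $\delta_x(f)$ is a jump of $\rho_x$.

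It remains to check the defining property $(*)$. Given an effective divisor $E$ of degree $r$ on $\Gamma$, I lift it to an effective $\mathcal E$ of degree $r$ on $X$ with $\tau_*(\mathcal E)=E$, choosing the points over each $x$ to specialize to general points of $\C_x$. Since $(\mathcal O(\mathcal D),H)$ has rank $r$, the $r$ conditions of $\mathcal E$ leave a nonzero $f\in H$ with $\div(f)+\mathcal D\geq\mathcal E$; put $F=-\log|f|\in\mathrm{Rat}(\mathfrak S)$. Condition (2) is then immediate by pushing forward along the monotone map $\tau_*$: using $\tau_*(\div(f))=\div(F)$ and $\tau_*(\mathcal D)=D$ one gets $\div(F)+D=\tau_*(\div(f)+\mathcal D)\geq\tau_*(\mathcal E)=E$. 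The crux, which I expect to be the main obstacle, is condition (1), $\rho_x(\delta_x(F))\geq E(x)$, a local rank-preservation statement on each $\C_x$. Here $\rho_x(\underline i)$ is the dimension (minus one) of the space of functions in $\widetilde H_x$ whose slopes at the $x^\nu$ are at least $\delta_x(F)$, and the naive bound $\dim\geq (r+1)-\sum_\nu i_\nu$ is too weak, since the gap $D(x)-\sum_\nu s_0^\nu-r$ can be positive, reflecting a base locus of $\widetilde H_x$ on $\C_x$. The correct argument treats $\widetilde H_x$ as a linear series of rank $r$ on $\C_x$ — this rank being inherited from $H$ on $X$ through the dimension-preserving reduction — and uses the generality of the lifted points of $\mathcal E$ on $\C_x$ to exhibit an $(E(x)+1)$-dimensional subspace of $\widetilde H_x$ all of whose members have slopes $\geq\delta_x(F)$. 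Organizing this local estimate uniformly over all $x$, together with the finiteness statements underlying the choice of $V$, is the technical heart of the proof; the remaining verifications reduce to bookkeeping with the slope formula.
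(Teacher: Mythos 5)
Before assessing details, note that the paper itself does not prove this theorem: it is imported from \cite{A} (in preparation), and the text following the statement only records the strategy --- extend the data $S^x$, $S^\nu$ from type-II points to all of $\Gamma$ via a finite graph model, then verify property $(*)$ using $\tau_*(\div(f))=\div(-\log|f|)$, ``as in the proof of the specialization theorem for metrized complexes \cite{AB}''. Your outline follows this same strategy, and parts of it are correct and usefully explicit (the antisymmetry $s^{uv}_i+s^{vu}_{r-i}=0$ from linearity along edges; the fact that $\delta_x(f)$ is a jump of $\rho_x$ for every reduction, by combining the strict dimension drop with $\rho(\underline i+\underline e_m)\geq \rho(\underline i)-1$). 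But two of your steps have genuine gaps. The first is the construction of $V$: you justify finiteness of the exceptional interior points by the breakpoints of the finitely many basis reductions $F_j=-\log|f_j|$. This cannot work, because $\mathfrak S$ records the slopes of \emph{all} $f\in H$, and the locus where the sets $S^\nu_x$ or the rank functions change is not contained in the breakpoint locus of a basis. Concretely, let $X$ be a Tate curve whose skeleton is a circle of circumference $1$, let $P\in X(\K)$ reduce to $v$, and let $H=H^0(X,\O(2P))$, a $\g^1_2$. A nonconstant $f\in H$ has zeros reducing to a pair $\{q,-q\}$ (group law on the circle, $v=0$), and $q$ ranges over a dense set as $f$ varies, so breakpoints of reductions are dense; for a fixed orientation, the slope set at $x$ is $\{0,1\}$ on one open half-circle and $\{0,-1\}$ on the other, changing exactly at $v$ and at its antipode $w$ (the reductions of the ramification divisor of the $\g^1_2$). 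At $w$ the constants have index vector $(1,1)$, with sum $2>r=1$, so the rank function at $w$ is not standard and $w$ must be a vertex of any model inducing $\mathfrak S$; yet for a generic basis, $w$ is neither a breakpoint of a basis element nor the retraction of any zero or pole. Finiteness of such points is precisely the nontrivial content of the ``finite graph model'' claim, and needs a real argument (e.g.\ that each slope function $x\mapsto s_i^{\nu_x}$ is monotone, integer-valued and bounded along an edge); this is part of what the paper defers to \cite{A}.

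The second gap is the one you flag yourself, condition $(1)$ of $(*)$, and your sketch of it is circular as stated: $\delta_x(F)$ is only known after $f$ has been extracted from the lifted divisor $\mathcal E$, and a single $f$ does not produce an $(E(x)+1)$-dimensional subspace. The missing mechanism is to quantify the genericity \emph{before} choosing $f$: there are finitely many candidate index vectors $\underline i\in\Box^{\val(x)}_r$, each with its subspace $V_{\underline i}=\bigcap_{\nu\in\T_x(\Gamma)}\F^{(\nu)}_{i_\nu}\subseteq \widetilde H_x$; choose the $E(x)$ points $p_1,\dots,p_{E(x)}\in\C_x(\k)$ distinct, away from the $x^\nu$ and from the reduction of $\mathrm{supp}(\mathcal D)$, and general enough to impose independent conditions on \emph{every} $V_{\underline i}$ simultaneously, then lift them into the corresponding balls $B_{\mu_j}$. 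For whatever $f\in H$ with $\div(f)+\mathcal D\geq \mathcal E$ you then obtain, the slope formula applied to each $B_{\mu_j}$ (which contains a zero and no pole of $f$) shows $\widetilde f_x$ vanishes at every $p_j$; since $\widetilde f_x\in V_{\delta_x(F)}$ and the $p_j$ impose independent conditions there, $\dim V_{\delta_x(F)}\geq E(x)+1$, i.e.\ $\rho_x(\delta_x(F))\geq E(x)$, while condition $(2)$ follows by pushforward as you say. Finally, your lifting step fails outright when $\mathrm{supp}(E)$ contains a type III point $x$: then $\tau^{-1}(x)=\{x\}$ contains no $\K$-point, so no effective divisor on $X$ pushes forward to $E$. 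Since $(*)$ quantifies over all effective divisors of degree $r$ on $\Gamma$, this case needs a separate argument, e.g.\ approximating $x$ by type II points on the edge and using that functions compatible with $\mathfrak S$ have slopes in a fixed finite set, so that the relevant spaces of functions are closed under uniform limits.
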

 
We already defined $S^x$ and $S^\nu$ for type II points of 
 $\Gamma \subset X^{\an}$ and $\nu \in \T_x(\Gamma)$. The main point of the above theorem is 
 that the definitions 
 can be extended to all points of 
 $\Gamma$, and that the collection $\mathfrak S = \Bigl\{\,S^x; S^\nu\,\Bigr\}_{x\in \Gamma, \nu\in \T_x(\Gamma)}$ 
 is induced from a simple graph model of $\Gamma$ (or equivalently, from a semistable 
 vertex set of $X^{\an}$). The fact that $\tau_*(\div(f)) = \div(-\log|f|)$, which is a consequence
 of the slope formula, then shows that $(\tau_*(\mathcal D), \mathfrak S)$ 
 is a $\g^r_d$ on $\Gamma$, as in the proof of the specialization theorem for metrized 
 complexes~\cite{AB}; we omit the details which can be found in~\cite{A}. 
 We should say that in particular, for two linearly equivalent divisors 
  $\mathcal D \sim \mathcal D'$ on $X$, and $H$ a subspace 
  of the space of global sections of the corresponding line bundles $\mathcal O(\mathcal D)\simeq \mathcal O(\mathcal D')$ of projective dimension $r$, 
  the above theorem ensures that the two pairs $(D, \mathfrak S)$ and $(D',\mathfrak S')$ are linearly equivalent.   
 
 \medskip
 
  Note that in particular, if $(D, \mathfrak S)$ is a limit linear series on $\Gamma$ 
  induced by a $\g^r_d$ $(\mathcal D, H)$ on $X$, for $H \subseteq H^0(X, \mathcal O(\mathcal D))$,  
  then, by Proposition~\ref{prop:coef},
  for any point $x$ of $\Gamma$, we have 
  \begin{equation}\label{lem:redcoeffineq}
  D(x) -\sum_{\nu \in \T_x(\Gamma)} s_0^\nu \geq r\,.
  \end{equation}

\section{Reduction of Weierstrass divisors}~\label{sec:reduction}
Let $\mathcal D$ be  a divisor of degree $d$ and rank $r$ on $X$. We first 
show that in the case
$\mathrm{char}(\k) =0$, 
the induced limit linear $\g^r_d$ $(D, \mathfrak S)$ on the skeleton $\Gamma$ 
of $X^{\an}$ allows to describe the reduction of Weierstrass divisor of the 
line bundle  $L=\mathcal O(\mathcal D)$ on $X$.  We further 
give a similar, in spirit, description of the reduction in positive characteristic case, 
noting however that, as the form clearly suggests, 
the data of limit linear series is not, in general, enough 
to describe the reduction of Weierstrass divisor when $\mathrm{char}(\k)>0$.

\subsection{Equal characteristic zero}
  The main result of this section is the proof of Theorem~\ref{thm:SpW}. We will give a 
  simple proof here. However,  we note that, strictly speaking,   
the more conceptual approach given in the proof of Theorem~\ref{thm:SpW2} below 
implies this result.
  
 Let $L = \mathcal O(\mathcal D)$ be a line bundle of degree $d$ and rank $r$ on $X$, 
let $\Gamma$ be a skeleton of $X^{\an}$  
 and let  $(D, \mathfrak S)$ be the limit $\g^r_d$ induced on $\Gamma$ by 
 the specialization theorem.
 Denote by $\mathcal W$ the 
 Weierstrass divisor of $L$ on $X$.   Write $D = \sum_{x\in\Gamma} d_x(x)$. We have to show that 
 \[\tau_*(\mathcal W) = \sum_{x\in \Gamma} c_x (x),\]
 where the coefficient $c_x$ of a point $x$ has the following expression:
 \[c_x = (r+1)d_x + \frac{r(r+1)}{2}(2g_x-2+\val(x)) - \sum_{\nu\in \T_x(\Gamma)} \sum_{i=0}^r s_i^\nu.\]

\medskip

We will first prove a local version for standard 
balls  in $\mathbb A^{1,\an}$ as follows. Let $\B$ denote the standard closed ball with the 
ring of analytic functions the Tate algebra $\K\{T\} = 
\Bigl\{\sum_{i=0}^\infty a_i T^i\,\,\, \,|\,\,\,\, |a_i|\rightarrow 0\Bigr\}$, 
 with the supremum norm
\[|\,\sum_{i=0}^{\infty}a_iT^i\,|_{\sup} = \max |a_i|.\]
The reduction of $\K\{T\}$ with respect to $|\,.\,|_{\sup}$ is the polynomial ring $\k[t]$ where $t$ denotes the reduction of $T$. 
Denote by $\zeta$ the corresponding point of $\B$, and by $\B_+$ the open ball of $\B$ whose closure in 
$\B$ corresponds to the point $(t=0)$ of $\Spec \k[t]$. Note that $\B_+(\K) = \{a\in K\,|\, |a|<1\}$.

\medskip

 Let $f_0,\dots, f_r$ be $(r+1)$ $\K$-linearly independent meromorphic functions on $\B$ (each $f_i$
  is of the form $g_i/h_i$ for $g_i,h_i \in \K\{T\}$, with $h_i\neq 0$). 
 Suppose that the reductions at $\zeta$ have orders of vanishing 
 $s_0< \dots < s_r$ at point $0\in \Spec \k[t]$. 
 Let $\mathcal F=\{f_0,\dots, f_r\}$, and denote by $H$ the $\K$-vector space generated by 
 $f_i$s.  To any point $a \in \B_+(\K)$, 
 one associates the increasing sequence $s_0^a<\dots<s^a_r$ of all the orders of 
 vanishing of meromorphic functions in $H$. Define the weight of $a$ with respect to $H$ by 
 $$w(a) = w_{H}(a):=
 s_0^a+\dots+s^a_r - \frac {r(r+1)}2.$$
A point $a\in \B_+(\K)$ is called a Weierstrass point of $H$ if $w(a) \neq 0$. The Wronskian $\mathrm{Wr}_{\mathcal F}$ is the meromorphic function on $\B_+$ defined by 
   $$\mathrm{Wr}_{\mathcal F} := \det \Bigl(f_{i}^{(j)}\Bigr)_{0\leq i,j\leq r}, \qquad  
   \mathrm{where} \qquad f^{(j)}_i = \frac {d^jf_i}{dT^j}.$$
  
The following is straightforward, and holds more generally under the weaker assumption $\mathrm{char}(\K)=0$, i.e.,  
without any restriction on the characteristic of $\k$.
   \begin{lemma}\label{lem:cle} The Weierstrass divisor of $H$  is equal to
the zero divisor of the meromorphic function $\mathrm{Wr}_{\mathcal F}$ in $\B_+$.
\end{lemma}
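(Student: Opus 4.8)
The plan is to prove the sharper pointwise statement that for every $a\in\B_+(\K)$ the order of vanishing of $\mathrm{Wr}_{\mathcal F}$ at $a$ equals the weight $w(a)=\sum_{i=0}^r s_i^a-\frac{r(r+1)}2$; summing over all $a$ then identifies the zero divisor of $\mathrm{Wr}_{\mathcal F}$ with the Weierstrass divisor of $H$. First I would record the basis-independence: if $\mathcal F'$ is obtained from $\mathcal F$ by an element $M\in\GL_{r+1}(\K)$, then the derivative matrix transforms as $(g_i'^{(j)})=M\,(g_\ell^{(j)})$, whence $\mathrm{Wr}_{\mathcal F'}=\det(M)\,\mathrm{Wr}_{\mathcal F}$. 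Thus the two Wronskians differ by a nonzero scalar, so $\div(\mathrm{Wr}_{\mathcal F})$ depends only on $H$, and in $\mathrm{char}(\K)=0$ linear independence of the $f_i$ guarantees $\mathrm{Wr}_{\mathcal F}\not\equiv0$, so this zero divisor is well defined. The gain is that I may choose, \emph{separately at each point} $a$, a basis adapted to the local computation there.

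Fix $a$ and let $t=T-a$ be a local coordinate. Since $\dim_\K H=r+1$ and $s_0^a<\dots<s_r^a$ are exactly the orders of vanishing realized by elements of $H$ at $a$, I can select a basis $g_0,\dots,g_r$ of $H$ with $\ord_a(g_i)=s_i^a$; writing $s_i:=s_i^a$ this means $g_i=c_i t^{s_i}+(\text{higher order in }t)$ with $c_i\neq0$, where Laurent expansions are permitted so that poles (negative $s_i$) cause no difficulty. The goal then reduces to the classical local identity $\ord_a\mathrm{Wr}(g_0,\dots,g_r)=\sum_i(s_i-i)=w(a)$.

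For the computation I would first handle pure monomials: factoring $t^{s_i}$ out of the $i$-th row and $t^{-j}$ out of the $j$-th column of $\bigl((t^{s_i})^{(j)}\bigr)_{i,j}$ yields $\mathrm{Wr}(t^{s_0},\dots,t^{s_r})=t^{\sum_i s_i-\frac{r(r+1)}2}\det\bigl(s_i^{\underline j}\bigr)_{i,j}$, where $s_i^{\underline j}$ is the falling factorial. Because $x^{\underline j}=x^j+(\text{lower-order monomials})$, the matrix $\bigl(s_i^{\underline j}\bigr)$ is a unitriangular modification of the Vandermonde matrix, so its determinant equals $\prod_{i<k}(s_k-s_i)$, which is nonzero since $\mathrm{char}(\K)=0$ and the $s_i$ are distinct integers; hence this Wronskian has order exactly $\sum_i s_i-\frac{r(r+1)}2$. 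Then I expand each $g_i=\sum_{k\ge s_i}c_{i,k}t^k$ and use multilinearity of the Wronskian in its arguments to write $\mathrm{Wr}(g_0,\dots,g_r)=\sum_{k_0,\dots,k_r}\bigl(\prod_i c_{i,k_i}\bigr)\mathrm{Wr}(t^{k_0},\dots,t^{k_r})$. Terms with a repeated exponent vanish (two equal rows), and for distinct exponents $\mathrm{Wr}(t^{k_0},\dots,t^{k_r})$ has order $\sum_i k_i-\frac{r(r+1)}2$ by the previous paragraph. The constraints $k_i\ge s_i$ force $\sum_i k_i\ge\sum_i s_i$, with equality only when $k_i=s_i$ for all $i$, so the unique term of minimal order is $\bigl(\prod_i c_i\bigr)\mathrm{Wr}(t^{s_0},\dots,t^{s_r})$, whose leading coefficient is nonzero.

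I expect the only genuine subtlety to be this final non-cancellation argument: one must verify that the leading contribution coming from the adapted leading terms $c_i t^{s_i}$ survives and is not annihilated by the higher-order tails of the $g_i$. The multilinear expansion combined with the strict monotonicity $s_0<\dots<s_r$ isolates $k_i=s_i$ as the unique minimizer of $\sum_i k_i$, and the Vandermonde nonvanishing — where the hypothesis $\mathrm{char}(\K)=0$ enters decisively — ensures the corresponding coefficient is nonzero. Together these give $\ord_a\mathrm{Wr}_{\mathcal F}=w(a)$ for every $a\in\B_+(\K)$, and summing over $a$ completes the proof of the lemma.
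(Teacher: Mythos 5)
Your proof is correct, and it is precisely the classical argument the paper is invoking when it declares this lemma "straightforward" without writing a proof: the pointwise identity $\ord_a\mathrm{Wr}_{\mathcal F}=w(a)$, obtained from a basis of $H$ adapted to the vanishing sequence at $a$, multilinearity, and the falling-factorial Vandermonde determinant $\det\bigl(s_i^{\underline j}\bigr)=\prod_{i<k}(s_k-s_i)\neq 0$ in characteristic zero. Indeed, your leading-term computation is the same one the paper itself performs over the residue field in the proof of Lemma~\ref{lem:totalweight}, where $\ord_0\widetilde{\mathrm{Wr}_{\mathcal F}}$ is read off from $\det\bigl(s_i^{j}t^{s_i-j}\bigr)$, so your write-up simply supplies the omitted details (including the useful observation that Laurent expansions handle poles, i.e.\ negative $s_i^a$, uniformly).
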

Let $F = -\log|\mathrm{Wr}_{\mathcal F}|$. Denote by $\nu$ the tangent direction in $\T_\zeta$ 
corresponding to the point $0\in \Spec \k[t]$. Note that 
$F$ is a piecewise affine function with integral slopes on $\B$, which is 
 harmonic on $\B\setminus \zeta$. By the previous lemma, 
 the slope of $F$ along the unique tangent direction $\nu_a\in \T_a$ for $a\in \B(\K)$, more precisely, along the unique ray from $a$ to $\zeta$ in $\B$, 
 is equal to $-w(a)$. In addition, the slope of $F$ along $\nu \in \T_\zeta$ is equal to 
 the order of vanishing at $0$ of the reduction of 
 $\widetilde{\mathrm{Wr}_{\mathcal F}}\in \k(t)$. It thus follows 
 by the slope formula~\cite{BPR} that 
 $$\mathrm{slope}_\nu(F) = \sum_{a\in \B_+(\K)} w(a).$$

\begin{lemma}\label{lem:totalweight}
 The total weight $\sum_{a\in \B_+(\K)}w(a)$ of Weierstrass points with respect to $H$ in 
 $\B_+$ is equal to $s_0+\dots+s_r - \frac{r(r+1)}2$.
\end{lemma}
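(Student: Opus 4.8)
The plan is to reduce the statement to the computation of a single order of vanishing. By the preceding discussion, combining the slope formula with the identification of the slope of $F$ at $\zeta$ as a reduction order, one has
\[\sum_{a\in\B_+(\K)}w(a)=\mathrm{slope}_\nu(F)=\ord_0\bigl(\widetilde{\mathrm{Wr}_{\mathcal F}}\bigr),\]
so it suffices to prove that $\ord_0(\widetilde{\mathrm{Wr}_{\mathcal F}})=s_0+\dots+s_r-\frac{r(r+1)}2$. The first observation is that this number is independent of the chosen basis $\mathcal F$ of $H$: passing to another basis multiplies $\mathrm{Wr}_{\mathcal F}$ by the determinant of the invertible change-of-basis matrix, hence changes $F=-\log|\mathrm{Wr}_{\mathcal F}|$ by an additive constant and leaves $\mathrm{slope}_\nu(F)$ unchanged. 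I am therefore free to choose $\mathcal F$ conveniently.

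Concretely, I would lift a basis of $\widetilde H$ adapted to the vanishing-order filtration at $0$ to a basis $g_0,\dots,g_r$ of $H$ with $|g_i|_\zeta=1$ and $\ord_0(\widetilde g_i)=s_i$; this is possible since $\dim_\k\widetilde H=r+1$ by Lemma~\ref{lem:dimensionreduction} and $s_0<\dots<s_r$ are precisely the jumps of that filtration. The decisive step is that, because $\mathrm{char}(\k)=0$, differentiation does not increase the sup-norm $|\cdot|_\zeta$ and the derivation $d/dT$ reduces to $d/dt$ on $\k(t)=\widetilde{\H(\zeta)}$; in particular $|g_i^{(j)}|_\zeta\le1$ and $\widetilde{g_i^{(j)}}=(\widetilde g_i)^{(j)}$ for all $i,j$. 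Since reduction is a ring homomorphism on functions of norm at most $1$, it commutes with the determinant, so $|\mathrm{Wr}_{\mathcal F}|_\zeta\le1$ and
\[\widetilde{\mathrm{Wr}_{\mathcal F}}=\det\bigl((\widetilde g_i)^{(j)}\bigr)_{0\le i,j\le r}=\mathrm{Wr}\bigl(\widetilde g_0,\dots,\widetilde g_r\bigr),\]
the classical Wronskian of the reductions in $\k(t)$; it is nonzero because the $\widetilde g_i$ are $\k$-linearly independent and $\mathrm{char}(\k)=0$.

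It then remains to compute $\ord_0\mathrm{Wr}(\widetilde g_0,\dots,\widetilde g_r)$ from the data $s_0<\dots<s_r$. Writing $\widetilde g_i=c_it^{s_i}+(\text{higher order})$ with $c_i\ne0$, the entry $(\widetilde g_i)^{(j)}$ has leading term $c_i\,s_i(s_i-1)\cdots(s_i-j+1)\,t^{\,s_i-j}$. In the permutation expansion of the determinant every term $\prod_i(\widetilde g_i)^{(\sigma(i))}$ has order at least $\sum_i s_i-\sum_i\sigma(i)=\sum_i s_i-\frac{r(r+1)}2$, since $\sum_i\sigma(i)=\frac{r(r+1)}2$ does not depend on $\sigma$; and the coefficient of $t^{\sum_i s_i-r(r+1)/2}$ equals $\bigl(\prod_i c_i\bigr)\det\bigl(s_i(s_i-1)\cdots(s_i-j+1)\bigr)_{0\le i,j\le r}$. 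The last matrix has entries that are monic of degree $j$ in $s_i$, so unipotent column operations turn it into the Vandermonde matrix $(s_i^{\,j})_{i,j}$, of determinant $\prod_{i<j}(s_j-s_i)\ne0$ as the $s_i$ are distinct. Hence this coefficient is nonzero, giving $\ord_0\mathrm{Wr}(\widetilde g_0,\dots,\widetilde g_r)=\sum_i s_i-\frac{r(r+1)}2$, as required.

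The heart of the argument, and the only place where the hypothesis is essential, is the commutation of reduction and differentiation together with the nonvanishing of the falling-factorial Vandermonde. In residue characteristic $p>0$ both $\widetilde{g^{(j)}}=(\widetilde g)^{(j)}$ and $\det\bigl(s_i(s_i-1)\cdots(s_i-j+1)\bigr)\ne0$ can fail, so the reduction of the Wronskian is no longer the naive Wronskian of the reductions; this is exactly why Lemma~\ref{lem:totalweight} is stated under $\mathrm{char}(\k)=0$, and why the positive-characteristic analysis of Theorem~\ref{thm:SpW2} requires a different input.
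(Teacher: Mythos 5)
Your proof is correct and takes essentially the same route as the paper's: both reduce, via Lemma~\ref{lem:cle} and the slope formula, to showing $\ord_0\bigl(\widetilde{\mathrm{Wr}_{\mathcal F}}\bigr)=s_0+\dots+s_r-\frac{r(r+1)}2$, identify the reduction of the Wronskian with the Wronskian of the reductions (valid since $\mathrm{char}(\k)=0$), and evaluate that order through the falling-factorial/Vandermonde determinant. The only difference is one of detail: you make explicit the basis independence, the norm bound on derivatives, and the commutation of reduction with $d/dT$, which the paper compresses into the phrase ``standard properties of the Wronskian in characteristic zero.''
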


\begin{proof}
 By the previous lemma, and the above discussion, we need 
 to show that 
 $$\ord_0\Bigl(\,\widetilde{\mathrm{Wr}_{\mathcal F}}\,\Bigr) = s_0+\dots+s_r - \frac{r(r+1)}2.$$
  
This follows from the standard properties of the Wronskian in characteristic zero: write
$\widetilde{f_i}  = t^{s_i} \mathfrak f_i \in \k(t)$, with $(\mathfrak f_i,t)=1$. 
Let $\widetilde{\mathcal F} = \{\widetilde{f_i}\}$.
Then 
$\widetilde{\mathrm{Wr}_{\mathcal F}} = \mathrm{Wr}_{\widetilde{\mathcal F}}$, and we have 
$$\ord_0\Bigl(\,\widetilde{\mathrm{Wr}_{\mathcal F}}\Bigr) 
= \ord_0 \Bigl(\det\Bigl(\,(s_i^{j}t^{s_i-j})_{i,j=0}^r\,\Bigr)\Bigr),$$
from which the result follows. 
\end{proof}

We need one more fact. Let $C$ be a smooth proper curve of genus $g(C)$ 
over an algebraically closed field 
$\k$ of characteristic zero, 
and let $\widetilde{H} \subset \k(C)$ be a vector space of dimension $(r+1)$ over $\k$. 
 For any point $x\in C(\k)$, denote by $s_0^x<\dots<s_r^x$ all the different orders 
 of vanishing of rational functions $f\in \widetilde H$ at $x$. Define the weight of $x$ by 
 $w(x) = \sum_{i=0}^r s_i^x - \frac{r(r+1)}2$. 
\begin{lemma}\label{lem:totweier} Notations as above, we have
$$\sum_{a\in C} w(a) = (g(C)-1) r(r+1).$$
\end{lemma}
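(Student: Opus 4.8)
The plan is to interpret the total weight $\sum_{a\in C} w(a)$ as the degree of the divisor cut out by a globally defined Wronskian, which in the present setting—functions rather than sections of a line bundle of positive degree—is a rational section of $\Omega^{\otimes r(r+1)/2}$, of degree independent of any ``degree'' attached to $\widetilde H$.

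First I would fix a basis $f_0,\dots,f_r$ of $\widetilde H$ and, working with respect to a separating parameter $t$ for $\k(C)$, form $\mathrm{Wr}_{\mathcal F} = \det\bigl(f_i^{(j)}\bigr)_{0\le i,j\le r}$ exactly as in the definition of the Weierstrass divisor recalled in the introduction. The key structural observation is that, because the $f_i$ are honest rational functions, the quantity $\omega := \mathrm{Wr}_{\mathcal F}\,(dt)^{\otimes r(r+1)/2}$ is a well-defined nonzero rational section of $\Omega^{\otimes r(r+1)/2}$, independent both of the basis—changing basis multiplies $\mathrm{Wr}_{\mathcal F}$ by the nonzero determinant of the transition matrix—and of the local parameter. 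The latter independence is the classical transformation law $\mathrm{Wr}_s(\mathcal F) = (dt/ds)^{r(r+1)/2}\,\mathrm{Wr}_t(\mathcal F)$ for the Wronskian of functions, and this is precisely where the absence of an $L$-twist, hence the independence from the degree $d$, enters.

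Next I would compute the order of vanishing of $\omega$ at an arbitrary point $x\in C(\k)$ and check that it equals $w(x)$. Choosing a basis of $\widetilde H$ adapted to the vanishing filtration at $x$, so that its members have orders of vanishing exactly $s_0^x<\dots<s_r^x$, and writing each in a local coordinate $t$ at $x$, the order of vanishing of the Wronskian reduces to $\ord_0 \det\bigl((s_i^x)^{\,j}\,t^{\,s_i^x-j}\bigr)_{i,j=0}^{r} = \sum_{i=0}^r s_i^x - \tfrac{r(r+1)}{2} = w(x)$. This is the same local determinant computation already carried out in Lemma~\ref{lem:totalweight}, and since $\mathrm{char}(\k)=0$ the underlying Vandermonde-type determinant $\det\bigl((s_i^x)^{\,j}\bigr)$ is nonzero, so no cancellation can occur. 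As the change of basis only scales $\mathrm{Wr}_{\mathcal F}$ by a nonzero constant, $\ord_x(\omega)=w(x)$ holds at every point; in particular $w(x)=0$ away from finitely many points, so $\div(\omega)=\sum_{a\in C} w(a)\,(a)$ is a finite sum.

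Summing the local contributions then gives $\sum_{a\in C} w(a) = \deg\bigl(\div(\omega)\bigr) = \deg\bigl(\Omega^{\otimes r(r+1)/2}\bigr) = \tfrac{r(r+1)}{2}\bigl(2g(C)-2\bigr) = (g(C)-1)\,r(r+1)$, as claimed. I expect the main obstacle to be the second step: verifying the coordinate-independent transformation law and thereby that $\omega$ genuinely lands in $\Omega^{\otimes r(r+1)/2}$ with no twist by any line bundle attached to $\widetilde H$. Once this is in place, the order-of-vanishing identity is purely local and is essentially Lemma~\ref{lem:totalweight}, and the conclusion is just the degree of a canonical-type line bundle.
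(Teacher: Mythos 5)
Your proof is correct and takes essentially the same route as the paper: the paper's own proof simply asserts that the weight $w(x)$ is the order of vanishing at $x$ of the Wronskian, viewed as a meromorphic section of $\Omega^{\otimes r(r+1)/2}$, and concludes that the total weight equals the degree $\frac{r(r+1)}{2}(2g(C)-2)=(g(C)-1)r(r+1)$. What you add — the coordinate-independence via the transformation law $\mathrm{Wr}_s(\mathcal F)=(dt/ds)^{r(r+1)/2}\mathrm{Wr}_t(\mathcal F)$ and the local Vandermonde-type computation identifying $\ord_x(\omega)$ with $w(x)$ — is exactly the standard material the paper leaves implicit (the local step being the same computation as in Lemma~\ref{lem:totalweight}).
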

\begin{proof} The weight of a point, as defined above, corresponds to the order of vanishing of 
the Wronskian, which
is a meromorphic section of 
$\Omega^{r(r+1)/2}$. The total sum of weights of points of $C$ is thus equal to 
the degree of $\Omega^{r(r+1)/2}$. 
\end{proof}

\begin{proof}[Proof of Theorem~\ref{thm:SpW}] 
Let $H = H^0(X, \mathcal O(\mathcal D))$, that we view as $H \subseteq \K(X)$. 
The complement of $\Gamma$ in $X^{\an}$ is a disjoint union of open balls. 
Each such open ball $B_\nu$ corresponds to a unique 
$\nu \in \T_x \setminus \T_x(\Gamma)$ for a point of type II $x$ in $\Gamma$, and  
is isomorphic to the standard open ball 
$\B_+$. The point $x$ is the end of $B_\nu$, which is the 
unique point of type II in the closure of $B_{\nu}$ in $X^{\an}$ which lies in 
$\Gamma$. Denote by $x^\nu$ the  point of $\C_x(\k)$ corresponding to $\nu\in \T_x\setminus \T_x(\Gamma)$. 

The restriction of $H$ to $B_\nu$ gives an $(r+1)$ dimensional vector 
space of meromorphic functions on $\B_+ \simeq B_\nu$. Let $s_0^\nu<\dots<s_r^\nu$ be the $(r+1)$ different orders of vanishing of the rational functions in 
$\widetilde H$ at $x^\nu$. They correspond to the $(r+1)$ different slopes along $\nu$ 
of the reductions $-\log|f|$ of rational functions $f\in H$.  Thus, 
by Lemma~\ref{lem:totalweight}, for the weights 
$w(a)$ associated to points $a\in B(\k)$ with respect to $H$, we get
\[\sum_{a\in B_\nu(\K)} w(a) = \sum_{i=0}^r s_i^\nu -\frac{r(r+1)}2\,.\]

On the other hand, 
the (Weierstrass) multiplicity of a point $a$ of $B_\nu(\K) \subset X(\K)$ is equal to 
$(r+1)\mathcal D(a) + w(a)$. 
Combining with the above equations, we get
\begin{align*}
 \tau_*(\mathcal W)(x) &= \sum_{\nu\in \T_x \setminus \T_x(\Gamma) \,,\,a\in B_\nu(\K)}
 \Bigl(\,(r+1)\mathcal D(a)+ w(a)\,\Bigr) \\
 &= 
 (r+1)\tau_*(\mathcal D)(x) + \sum_{\nu \in \T_x \setminus \T_x(\Gamma)} \sum_{i=0}^{r}s_i^\nu - \frac{r(r+1)}2\\
&= (r+1)d_x + (g_x-1)r(r+1) - \sum_{\nu\in \T_x(\Gamma)} \Bigl(\,\sum_{i=0}^r s_i^\nu -\frac{r(r+1)}2\,\Bigr) \,\,\, 
\textrm{(By Lemma~\ref{lem:totweier})}\\
&=(r+1)d_x + \frac{r(r+1)}2(2g_x-2 + \val(x)) - \sum_{\nu\in \T_x(\Gamma)}\sum_{i=0}^r s_i^\nu\,,
\end{align*}
which gives the result.
\end{proof}

\subsection{The case $\mathrm{char}(\k)=p>0$}

In this section we state a generalization of
Theorem~\ref{thm:SpW} to arbitrary characteristic. The main  tool is the 
metrization of the sheaf of differential forms, treated in a recent paper of Temkin~\cite{Temkin14} and~\cite{CTT}. We use the definition of Weierstrass points as 
in~\cite{SV, Schmidt, Lak84}, which works well in positive characteristic.

Denote by $0\leq b_0<\dots < b_r$ the gap sequence of $\mathcal O(\mathcal D)$ on $X$. 
By definition, $\mathcal O(\mathcal D)$ is classical if this is 
the sequence $0,1,\dots, r$; this happens for example when $\mathrm{char}(\K)$ 
is of characteristic zero.  Consider now a basis of global sections $f_0,\dots, f_r\in \K(X)$ of $\mathcal O(\mathcal D)$, and let $\mathcal F =\{f_0,\dots,f_r\}$. We have
\begin{thm}\label{thm:SpW2}
 Let $L = \mathcal O(\mathcal D)$ be a line bundle of degree $d$ and rank $r$, 
 and denote by $\mathcal W$ its 
 Weierstrass divisor on $X$. Let $\Gamma$ be a skeleton of $X^{\an}$, 
 and $D = \tau_*(\mathcal D) = \sum_{x\in\Gamma} d_x(x)$. We have 
 \[\tau_*(\mathcal W) = \sum_{x\in \Gamma} c_x (x),\]
 where the coefficient $c_x$ of a point $x$ has the following expression:
 \[c_x = (r+1)d_x + \Bigr(\,\sum_{i=0}^r\,b_i\,\Bigl)\Bigl(\,2g_x-2 +
 \val(x)\,\Bigr) - 
 \sum_{\nu\in \T_x(\Gamma)} \ord_{x^{\nu}} \widetilde{\mathrm{Wr}_{\mathcal F}}.\]
 Furthermore, if $t$ is a tame parameter at $x$ which gives local parameters $t-a_\nu$ 
 at points $x^\nu \in \C_x$, for $\nu\in \T_x(\Gamma)$, and for some $a_\nu\in \k$, then 
 \[c_x = (r+1)d_x + \Bigr(\,\sum_{i=0}^r\,b_i\,\Bigl)\Bigl(\,2g_x-2 \,\Bigr) - 
 \sum_{\nu\in \T_x(\Gamma)} \ord_{x^{\nu}} \widetilde{\mathrm{Wr}_{\mathcal F,t}}\,.\]
\end{thm}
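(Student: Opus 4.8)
The plan is to prove Theorem~\ref{thm:SpW2} by adapting the strategy of Theorem~\ref{thm:SpW}, replacing the classical Wronskian analysis with the positive-characteristic Wronskian built from Hasse derivatives, and replacing the elementary local computation of Lemma~\ref{lem:totalweight} by the metrization of the sheaf of differential forms from~\cite{Temkin14, CTT}. As in the char-zero case, I would begin by decomposing $X^{\an}\setminus \Gamma$ into its open balls $B_\nu$, indexed by the tangent directions $\nu\in \T_x\setminus \T_x(\Gamma)$ at type II points $x$ of $\Gamma$, with $x^\nu\in\C_x(\k)$ the associated point. The Weierstrass multiplicity of a point $a\in B_\nu(\K)\subset X(\K)$ is again $(r+1)\mathcal D(a)+w(a)$, where now $w(a)$ is the weight defined via the gap sequence $0\le b_0<\dots<b_r$, so that $\tau_*(\mathcal W)(x)$ is obtained by summing these contributions over all balls adjacent to $x$.

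The main analytic input is a version of the slope formula for the Wronskian section. First I would recall that $\mathrm{Wr}_{\mathcal F}$ is a global section of $L^{\otimes(r+1)}\otimes\Omega^{\otimes\sum b_i}$, so that $F=-\log|\mathrm{Wr}_{\mathcal F}|$ is a piecewise affine function whose slopes encode both the orders of vanishing of the Wronskian on $X(\K)$ and, crucially, a differential-form contribution. The role of the metrization of $\Omega$ is precisely to quantify how $-\log$ of the norm of a differential behaves under reduction: along a tangent direction $\nu$ at $x$, the slope picks up a term proportional to the ``log-discrepancy'' of the chosen parameter, which is $2g_x-2+\val(x)$ after accounting for the $\sum b_i$ twist, and an order-of-vanishing term $\ord_{x^\nu}\widetilde{\mathrm{Wr}_{\mathcal F}}$ on the residue curve $\C_x$. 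Concretely, I would show
\[\mathrm{slope}_\nu(F) = \sum_{a\in B_\nu(\K)} w(a),\]
exactly as in the char-zero discussion, using harmonicity on each ball and part~$(4)$ of the slope formula, while on the $\Gamma$-side the incoming slopes at $x$ assemble into the second and third terms of $c_x$ through the Poincar\'e--Lelong formula applied to the Wronskian section and the metrized canonical bundle.

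The second displayed formula follows from the first by choosing a \emph{tame} parameter $t$ at $x$. The point is that a tame parameter is one for which the reduction commutes with Hasse differentiation: the residue curve then sees $\widetilde{\mathrm{Wr}_{\mathcal F}}=\mathrm{Wr}_{\widetilde{\mathcal F},t}$, the genuine Wronskian of the reductions computed with Hasse derivatives $D_t^{(b_j)}$ in the local parameter $t-a_\nu$ at $x^\nu$. Tameness forces the differential $dt$ to be ``balanced'' at $x$, removing the $\val(x)$ correction and converting $2g_x-2+\val(x)$ into $2g_x-2$; the remaining $\val(x)$-worth of slope gets absorbed into the term $\ord_{x^\nu}\widetilde{\mathrm{Wr}_{\mathcal F,t}}$, since each branch now contributes via the residue Wronskian in the tame parameter. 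I would make this precise by comparing the two metrizations of $\Omega^{\otimes\sum b_i}$ attached to an arbitrary section and to $(dt)^{\otimes\sum b_i}$, whose discrepancy at $x$ is governed by $\val(x)$.

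The hard part will be establishing the slope contribution of the differential-form factor rigorously, i.e.\ showing that the metrization of $\Omega$ of~\cite{Temkin14, CTT} feeds into the slope formula with exactly the coefficient $\sum_{i=0}^r b_i$ times $(2g_x-2+\val(x))$, and that the reduction of the Hasse-derivative Wronskian behaves compatibly with reduction of the $f_i$ at a tame parameter. In particular, one must verify that $\widetilde{\mathrm{Wr}_{\mathcal F}}$ does not vanish identically on $\C_x$ and genuinely computes orders of vanishing at each $x^\nu$ --- this is where the minimality of the gap sequence $(b_0,\dots,b_r)$ and the separability of the tame parameter are essential, and where the char-zero argument of Lemma~\ref{lem:totalweight} must be genuinely replaced rather than merely transcribed.
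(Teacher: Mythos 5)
Your overall strategy coincides with the paper's own proof: decompose $X^{\an}\setminus\Gamma$ into the open balls $B_\nu$, write the Weierstrass multiplicity of $a\in B_\nu(\K)$ as $(r+1)\mathcal D(a)+w(a)$, treat $\mathrm{Wr}_{\mathcal F}$ as a section of $\Omega^{\otimes b}$, $b=\sum_{i=0}^r b_i$, metrized by the K\"ahler seminorm of Temkin--CTT, convert the ball contributions into slopes of $-\log|\mathrm{Wr}_{\mathcal F}|$, and obtain the global term from a degree computation for $\Omega_{\C_x/\k}^{\otimes b}$ on the residue curve; your treatment of the second formula, trading $b\val(x)$ against the passage from $\widetilde{\mathrm{Wr}_{\mathcal F}}$ to $\widetilde{\mathrm{Wr}_{\mathcal F,t}}$ via the slope-one condition on $-\log r_t$, is also exactly the paper's argument.

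There is, however, one concrete error in your central displayed step, and it is precisely the step where the coefficient of $(2g_x-2+\val(x))$ is decided. With $F=-\log|\mathrm{Wr}_{\mathcal F}|$ taken in the K\"ahler seminorm of the \emph{section}, the identity $\mathrm{slope}_\nu(F)=\sum_{a\in B_\nu(\K)}w(a)$ is false, and the harmonicity you invoke to prove it fails: since $|\mathrm{Wr}_{\mathcal F}|_\Omega = r_t^{\,b}\,|\mathrm{Wr}_{\mathcal F,t}|$ and the radius $r_t$ of a parameter has slope one along every direction pointing into a ball, the function $F$ has slope $b$, not $0$, along all but finitely many tangent directions at $x$; in particular the sum of its outgoing slopes over $\T_x$ is not even finite. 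The correct identity, the paper's Equation~\eqref{eq:slope}, is $\sum_{a\in B_\nu(\K)}w(a)=\mathrm{slope}_\nu\bigl(-\log|\mathrm{Wr}_{\mathcal F}|\bigr)-b$. The paper then repairs the divergence by a twist: $\mathrm{slope}_\nu(-\log|\mathrm{Wr}_{\mathcal F}|)-b$ equals the order of vanishing at $x^\nu$ of $\widetilde{\mathrm{Wr}_{\mathcal F}}$ viewed as a meromorphic section of the reduction of $\Omega^{\diamond}_{X_G,\C_x}(-\C_x)$, which is isomorphic to $\Omega_{\C_x/\k}$, so the corrected slopes vanish for all but finitely many $\nu$ and sum over all of $\T_x$ to the finite degree $b(2g_x-2)$; the $-b$ corrections along the $\val(x)$ directions of $\T_x(\Gamma)$ are then exactly what convert $b(2g_x-2)$ into the $b(2g_x-2+\val(x))$ of the first formula. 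Your prose does gesture at this (``log-discrepancy of the parameter''), and you rightly flag the coefficient as the hard part, but the fix is not a refinement of your displayed identity --- it is its replacement by the $(-\C_x)$-twisted bookkeeping above.
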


We should say what $\widetilde{\mathrm{Wr}_{\mathcal F}}$ and 
$\widetilde{\mathrm{Wr}_{\mathcal F,t}}$ in the above 
expression stands for, and make them more explicit.  Consider the extension $\H(x)/\K$. 
By the uniformization theorem of Temkin~\cite{Temkin}, 
$\H(x)$ has an unramified parameter, i.e., there exists a parameter 
$t\in \H(x)$ such that $\H(x)/\widehat{\K(t)}$ is a finite unramified extension.  
For $t$  a parameter for $\H(x)/\K$, i.e., an element $t\in \H(x)$ such that 
$\H(x)/\widehat{\K(t)}$ is finite separable, define the radius of $t$ at $x$ as 
$r_t(x) = \inf_{c\in\K} |t-c|_x$. 

Let $|.|_\Omega$  be the K\"ahler seminorm on the module of differentials 
$\Omega_{\H(x)/\K}$ introduced in~\cite{Temkin14},
which is by definition the maximal seminorm such that the derivation
map $d: \H(x) \rightarrow  \Omega_{\H(x)/\K}$ is contracting.  
Denote by $\widehat{\Omega}_{\H(x)/\widehat{\K(t)}}$ the completion of the module of 
K\"ahler differentials with respect to the K\"ahler seminorm $|.|_{\Omega}$.

Let $t$ be a tame parameter for $\H(x)/\K$. Define the Hasse derivative on $\K[t]$  by 
$D^{(j)}(t^m) = \binom{m}{j}t^{m-j}$,  for $m>0$, and extend by linearity. 
This extends uniquely to $\K(t)$ and further to 
$\widehat{\K(t)}$. Since $\H(x)/\widehat{\K(t)}$ is separable, it also extends to  $\H(x)$. Note that 
$j! D^{(j)}$ is the usual derivative $d^{(j)}/dt^j$ defined as follows: for any $f\in \H(x)$, 
since $\Omega_{\H(x)/\widehat{\K(t)}}$ is one dimensional, there exists $h\in \H(x)$ 
such that 
$df = h dt$. Define $\frac{df}{dt} := h$. Then $d^{(j)}/dt^j$ is $j$-times 
composition of $d/dt$.

Let $t$ be a tame parameter at $x$. 
Then there exists an analytic subdomain $Y$ 
of $X^{\an}$ such that $t$ is a tame parameter for any point of $Y$~\cite{CTT}.

\medskip

Consider now the basis  $\mathcal F=\{f_0,\dots, f_r\}$ of the space of global sections of 
$\mathcal O(\mathcal D)$, which we view as elements of $\H(x)$. 
To simplify the presentation, set $b := \sum_{i=0}^r b_i$, and 
define 
\[\mathrm{Wr}_{\mathcal F,t} = \det \Bigl(\,D^{(b_j)}f_i\,\Bigr)_{i,j=0}^r\,,\]
and further define
$\mathrm{Wr}_{\mathcal F} = \mathrm{Wr}_{\mathcal F,t} dt^{b}$. Note that 
$\mathrm{Wr}_{\mathcal F}$ is a well-defined element of 
$\Omega_{\H(x)/\widehat{\K(t)}}^{\otimes b}$, and we have
\[|\mathrm{Wr}_{\mathcal F}|_\Omega^{\otimes b} = r_t^b \,.\, 
|\,\mathrm{Wr}_{\mathcal F,t}\,|_x \, .\]
Define $\ord_{x^{\nu}}\widetilde{\mathrm{Wr}_{\mathcal F}}$
as the slope of the piecewise linear function $-\log|\mathrm{Wr}_{\mathcal F}|$ 
at $x$ along $\nu$. Therefore, we have
\[\ord_{x^{\nu}}\widetilde{\mathrm{Wr}_{\mathcal F}} = 
-b \,\mathrm{slope}_{\nu}\Bigl(\,\log r_t\,\Bigr) + \ord_{x^{\nu}}\mathrm{Wr}_{\mathcal F,t}\,.\]

In particular, if $t$ is a parameter with $-\log(r_t)$ increasing of slope 1 along each 
$\nu$ (if $t-a_\nu$ is a local parameter at $x^\nu \in \C_x$ for some $a_\nu \in \k$), 
then we can rewrite 
\begin{align}
 c_x &= (r+1)d_x + b\,\Bigl(\,2g_x-2 +
 \val(x)\,\Bigr) - 
 \sum_{\nu\in \T_x(\Gamma)} \ord_{x^{\nu}} \widetilde{\mathrm{Wr}_{\mathcal F}} \\
 &= (r+1)d_x+b\,\bigl(\,2g_x-2\,\bigr) - 
 \sum_{\nu \in \T_x(\Gamma)} \ord_{x^\nu} \widetilde{\mathrm{Wr}_{\mathcal F, t}}\,,
\end{align}
which proves the second half of Theorem~\ref{thm:SpW2}. This latter form is the precise analogue of Theorem~\ref{thm:SpW}: indeed, in the case where 
$\mathrm{char}(\k) = 0$, we can easily see 
that 
\[\ord_{x^\nu}\widetilde{\mathrm{Wr}_{\mathcal F, t}} = \sum_{i=0}^r s^\nu_i - r(r+1)/2,\]
which  precisely reduces the statement to that of Theorem~\ref{thm:SpW}. 

\begin{proof}[Proof of Theorem~\ref{thm:SpW2}]
 The proof uses the recent results in~\cite{CTT} concerning the metrization of the sheaf of K\"ahler differentials in 
 $G$-topology. We follow the terminology of~\cite{CTT}, to which we refer for more details on
 basic properties of the K\"ahler seminorm on $\Omega_{X_G}$. 
 Let $\mathcal F$ be a basis for 
 $H^0(X, \mathcal O)$, and consider the meromorphic section of $\Omega_{X_G}^{\otimes b}$ given by the Wronskian 
 $\mathrm{Wr}_{\mathcal F}$. For any point of type II $x$ in $\Gamma$, and $\nu \in \T_x \setminus \T_x(\Gamma)$, 
  consider, as in Lemma~\ref{lem:cle}, 
  the open ball $B_\nu$ in $X^{\an}$. By the proof of that 
  lemma~\ref{lem:cle}, and the choice of parameter $T$, 
  the slope of $r_T$ along $\nu$ is one, and thus 
the slope of $-\log|\mathrm{Wr}_{\mathcal F}|$ 
 along $\nu\in \T_x \setminus \T_x(\Gamma)$ minus $b=\sum_{i=0}^r b_i$ is precisely
 the total weight of Weierstrass points in $B_\nu(\K)$. In other words, for any $\nu \in \T_x\setminus \T_x(\Gamma)$, we have
 
 \begin{equation}\label{eq:slope}
  \sum_{a\in B_\nu(\K)} w(a) = \mathrm{slope}_\nu \Bigl(\,-\log|\mathrm{Wr}_{\mathcal F}|\,\Bigr) 
 -b\,.
 \end{equation}

 Let $\Omega^{\diamond}_{X_G}$ be the unit ball of $|.|_\Omega$, and 
 consider the restriction $\Omega^{\diamond}_{X_G, \C_x}$ of $\Omega^\diamond_{X_G}$ to $\C_x$. 
 The reduction of the twist $\Omega^\diamond_{X_G,\C_x}(-\C_x)$ 
 is the sheaf of differentials $\Omega_{\C_x/\k}$. In addition, the number 
 $\mathrm{slope}_\nu \Bigl(\,-\log|\mathrm{Wr}_{\mathcal F}|\,\Bigr) - b$ 
 is precisely the order of vanishing at $x^{\nu}$ of the reduction 
 $\widetilde{\mathrm{Wr}_{\mathcal F}}$ seen as a meromorphic section of 
 $\widetilde{\Omega_{X_G, \C_X}(-\C_x)}^{\otimes b}$. Since this latter sheaf is isomorphic to 
 $\Omega_{\C_x/\k}^{\otimes b}$, which has total degree $b(2g_x-2)$, it follows that 
 
 \begin{align*}
  b(2g_x-2)\,\,&=\,\,\sum_{\nu \in \T_x} \Bigl(\,\,\mathrm{slope}_\nu \Bigl(\,-\log|\mathrm{Wr}_{\mathcal F}|\,\Bigr) 
 -b\,\,\Bigr) =\,\,\sum_{\nu\in \T_x} \bigl(\,\ord_{x^{\nu}} \widetilde{\mathrm{Wr}_\mathcal F}-b\,\bigr)\,.
 \end{align*}
 On the other hand, the coefficient $c_x$ of $x$ in the reduction $\tau_*(\mathcal W)$ is 
 \[c_x= (r+1)d_x+\sum_{\nu\in \T_x\setminus \T_x(\Gamma)} \sum_{a\in B_\nu(\K)} w(a),\]
which, combined with Equation~\eqref{eq:slope}, gives
\begin{align*}
 c_x &= (r+1)d_x+b(2g_x-2) - \sum_{\nu \in \T_x(\Gamma)}\bigl(\,\ord_{x^{\nu}} \widetilde{\mathrm{Wr}_\mathcal F}-b\,\bigr)\\
 &= (r+1)d_x + b\bigl(\,2g_x-2+ \val(x)\,\bigr) - 
 \sum_{\nu \in \T_x(\Gamma)}\,\ord_{x^{\nu}} \widetilde{\mathrm{Wr}_\mathcal F}\,.
\end{align*}
 This finishes the proof.   
\end{proof}

\section{Local equidistribution theorem}\label{sec:equilocal}

In this section we state a local equidistribution theorem which will be essential in the proof 
of our main theorem. 
 
 Let $X$ be a smooth proper curve over $\K$, $L = \mathcal O(\mathcal D)$ 
 a line bundle of positive degree $d$ on $X$, and 
 $x$ a point of type II in $X^{\an}$. Let $\nu \in \T_x$, and denote 
 by $x^\nu$ the corresponding $\k$-point of the curve 
 $\C_x$.

 For each integer $n\in \mathbb N$, let
 $H_n=H^0(X, \mathcal O(n\mathcal D)) \subseteq \K(X)$, $r_n = h^0(X, n\mathcal D)-1$, 
 and denote by $S_{n}^{\nu}$ the set of all the slopes of the 
 reductions of rational functions in 
 $H_n$ in $X^{\an}$ along $\nu$. Denote by $\widetilde{H_n}$ the reduction of $H_n$ in $\widetilde{\mathcal H}(x)$
  which is a vector space of dimension 
 $r_n+1$ over $\k$.

Since we assume that $\K$ is algebraically closed, by~\cite[Lemma 2.1.15]{MS}, the valuation of $\K$ has a (non-necessarily canonical) section. Fixing such a section, 
we have a canonical reduction which satisfies
 $\widetilde{f.g} = \widetilde f.\widetilde g$, and thus $\widetilde{H_n}.\widetilde{H_m} 
 \subseteq \widetilde{H_{n+m}}$, 
 for all $n,m\in\mathbb N$. So we can define the graded algebra, 
 reduction of the sectional ring of $\mathcal O(\mathcal D)$ at $x$, 
 \[\mathfrak A_{\nu} := \bigoplus_{n=0}^\infty \, \widetilde{H_n}\,.\]

 Denote by $\Lambda = \Lambda^\nu$ the convex hull in $\mathbb R$ 
 of all the rational numbers of the form $\frac sn$ for $n\in \mathbb N$ and $s\in S_n^\nu$, 
 \[\Lambda:= \textrm{conv-hull}\,\Bigl(\bigcup_{n\in \mathbb N, f\in H_n} 
 \bigl\{\frac {\mathrm{slope}_{\nu}(-\log|f|)}{n}\bigr\}\Bigr) = 
 \textrm{conv-hull}\,\Bigl(\bigcup_{n\in \mathbb N, \widetilde f\in \widetilde{H_n}}  
 \bigl\{\frac {\ord_{x^\nu}(\widetilde f)}{n}\bigr\}\Bigr).\]
Note that $\Lambda$ 
is precisely the Okounkov body associated to the graded ring 
of rational functions $\mathfrak A_{\nu}$ 
and the divisor  $\{x^\nu\} \subset \C_x$~\cite{KK, LM}.

Since the dimension of $\widetilde H_n$ (=$|S^{\nu}_n|$) is equal to 
$r_n+1 = dn-g+1$ for 
 large enough $n$, we have $\mathrm{vol}(\Lambda)  = d$, which shows that $\Lambda$ is 
 an interval of the form 
 $[s^\nu_{\min}, s^\nu_{\max}]$ for real numbers $s^\nu_{\min}$ and $s^\nu_{\max}$
with $s^\nu_{\max}=s^\nu_{\min}+d$ (we drop $\nu$ if there is no risk of confusion).

 \medskip
 
 For any $n\in \mathbb N$, denote now by $\eta_n$ the discrete probability measure on 
 $\Lambda \subset \mathbb R$
 \[\eta_n:=\frac 1{r_n+1} \sum_{s\in S_n^\nu}\delta_{n^{-1}s}.\]
The following is a special case of a more general equidistribution theorem for convex bodies 
associated to semigroups~\cite{Ok, KK, Bouk}, see e.g.,~\cite[Th\'eor\`eme 0.2]{Bouk}\,.
 \begin{thm}[Local equidistribution]\label{thm:equilocal}
The measures $\eta_n$ converge weakly to the Lebesgue measure on $\Lambda$. 
\end{thm}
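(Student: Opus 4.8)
The plan is to recognize $\eta_n$ as the normalized counting measures attached to the graded semigroup that underlies the Okounkov body $\Lambda$, and then to invoke the general equidistribution theorem for such semigroups, as the statement already anticipates. The only real work is to package the reduced sectional ring into an admissible graded semigroup and to check the (few) hypotheses of the cited theorem; the one-dimensionality of $\C_x$ will make everything else automatic.

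First I would assemble the semigroup. Using the fixed section of the valuation of $\K$, the reduction map is multiplicative, so $\mathfrak A_\nu=\bigoplus_n\widetilde{H_n}$ is a genuine graded $\k$-subalgebra of $\k(\C_x)$ with $\widetilde{H_n}\cdot\widetilde{H_m}\subseteq\widetilde{H_{n+m}}$. Since $\ord_{x^\nu}$ is a valuation on $\k(\C_x)$ with value group $\Z$, for nonzero $\widetilde f\in\widetilde{H_n}$ and $\widetilde g\in\widetilde{H_m}$ one has $\ord_{x^\nu}(\widetilde f\,\widetilde g)=\ord_{x^\nu}(\widetilde f)+\ord_{x^\nu}(\widetilde g)$, so that
$$\Gamma:=\bigl\{(n,s)\in\N\times\Z\ :\ s\in S_n^\nu\bigr\}$$
is a sub-semigroup of $\N\times\Z$. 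Because a rank-one valuation on functions of a curve takes exactly $\dim$-many distinct values on a finite-dimensional space, the orders of vanishing at $x^\nu$ of functions in $\widetilde{H_n}$ are pairwise distinct, whence $|S_n^\nu|=\dim_\k\widetilde{H_n}$, which equals $r_n+1$ by Lemma~\ref{lem:dimensionreduction}. In particular each $\eta_n$ is a probability measure supported on $\Gamma_n/n$, with $\Gamma_n=\{s:(n,s)\in\Gamma\}=S_n^\nu$.

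Next I would identify $\Lambda$ with the Okounkov body of $\Gamma$, namely the slice $\overline{C(\Gamma)}\cap(\{1\}\times\R)$ of the closed convex cone generated by $\Gamma$; this is exactly the description recorded just above the statement. To guarantee that $\Lambda$ is a nondegenerate interval I would use that $\deg(n\mathcal D)=dn>2g-2$ for $n$ large, so $\dim H_n=dn-g+1$ by Riemann--Roch, and hence $|\Gamma_n|=r_n+1=dn-g+1$. Thus $|\Gamma_n|/n\to d>0$, which forces $C(\Gamma)$ to be two-dimensional and $\mathrm{vol}(\Lambda)=d$, in agreement with the computation preceding the statement. Moreover, since $|\Gamma_n|\geq 2$ for $n$ large, $\Gamma$ contains two elements $(n,s),(n,s')$ with $s\neq s'$, and their difference $(0,s-s')$ together with $(n,s)$ generates a finite-index subgroup of $\Z^2$.

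Finally I would apply the general equidistribution theorem for semigroups and their Okounkov bodies~\cite{Ok,KK,Bouk}, in the form of~\cite[Th\'eor\`eme 0.2]{Bouk}. Its hypotheses reduce here to the two facts just verified: that the Newton--Okounkov body has nonempty interior (the content of $\mathrm{vol}(\Lambda)=d>0$) and that $\Gamma$ generates $\Z^2$ up to finite index. Granting these, the theorem yields that $\frac1{|\Gamma_n|}\sum_{s\in\Gamma_n}\delta_{s/n}$ converges weakly to the uniform probability measure on $\Lambda$, that is, to $\frac1d$ times the Lebesgue measure on the interval $\Lambda$ of length $d$; after the normalization built into $\eta_n$ this is precisely the asserted limit. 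The main obstacle is not any single calculation but this bookkeeping step: confirming that the reduced sectional ring $\mathfrak A_\nu$ really furnishes an \emph{admissible} graded semigroup in the sense demanded by the cited result, so that the $1$-dimensionality of $\C_x$ collapses all the higher-dimensional regularity requirements to the elementary lattice-generation and growth conditions checked above.
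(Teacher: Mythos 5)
Your proposal is correct and follows essentially the same route as the paper: the paper likewise identifies $\Lambda$ as the Okounkov body of the graded semigroup of reductions $\mathfrak A_\nu$ at $x^\nu$ and deduces the statement as a special case of the general equidistribution theorem for semigroups~\cite{Ok,KK,Bouk}, citing~\cite[Th\'eor\`eme 0.2]{Bouk}. The paper in fact gives no further verification, so your packaging of the semigroup structure, the count $|S_n^\nu|=r_n+1$, and the lattice-generation and volume hypotheses supplies more detail than the original text does.
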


 A direct consequence of the above theorem is the following useful corollary. 

\noindent Denote by $s_{n,0}^\nu < \dots<s_{n,r_n}^{\nu}$ all the integers in $S^\nu_n$.
First note that for two integers $n,m$ we have 
\[s^\nu_{n+m,0} \leq s^\nu_{n,0}+ s^\nu_{m,0}\,, \,\,\, \textrm{and} \,\,\, 
 s^\nu_{n,r_n}+ 
s^\nu_{m,r_m} \leq s^\nu_{n+m,r_{n+m}} \,, \]
which shows by Fekete lemma that
 \[\lim_{n\rightarrow \infty} \frac 1n s^\nu_{n,0} =s_{\min}^\nu\,,\,\,\textrm{and}\,\,\,
 \lim_{n\rightarrow \infty} \frac 1n s^\nu_{n,r_n} =s_{\max}^\nu\,. \]
 In particular, 
 \[\lim_{n \rightarrow \infty }\frac{s^\nu_{n,0} + s^{\nu}_{n,r_n}}{2nd} = 
 \frac{s_{\min}+s_{\max}}{2d} = \frac1d 
 \int_{\Lambda} \ta\,d\ta\,,\]
 where $d\ta$ is the Lebesgue measure on $\Lambda$. 
\begin{cor}\label{cor:sminmax}
 Notations as above, we have 
 \[\lim_{n\rightarrow \infty} \Bigl[\, 
  \frac{s^\nu_{n,0} + s^{\nu}_{n,r_n}}{2nd}  - \frac 1{nd(r_n+1)} \sum_{s\in S^\nu_{n}}s \,\Bigr] =0.\]
\end{cor}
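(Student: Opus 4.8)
The plan is to deduce Corollary~\ref{cor:sminmax} directly from the local equidistribution Theorem~\ref{thm:equilocal} together with the two limits stated just before the corollary. First I would rewrite the bracketed expression as a difference of two quantities, each of which converges to the same limit. The second term is
\[
\frac{1}{nd(r_n+1)}\sum_{s\in S^\nu_n}s = \frac 1d \int_\Lambda \ta\,d\eta_n(\ta),
\]
by the very definition of the discrete measure $\eta_n = \frac{1}{r_n+1}\sum_{s\in S^\nu_n}\delta_{n^{-1}s}$. Since $\eta_n$ converges weakly to the normalized Lebesgue measure on the bounded interval $\Lambda = [s^\nu_{\min}, s^\nu_{\max}]$, and the function $\ta\mapsto \ta$ is continuous and bounded on $\Lambda$, weak convergence gives
\[
\lim_{n\to\infty}\frac{1}{nd(r_n+1)}\sum_{s\in S^\nu_n}s = \frac 1d\int_\Lambda \ta\,d\ta = \frac{s_{\min}+s_{\max}}{2d}.
\]

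Next I would treat the first term. Using the two Fekete-type limits $\frac1n s^\nu_{n,0}\to s^\nu_{\min}$ and $\frac1n s^\nu_{n,r_n}\to s^\nu_{\max}$ established above, one gets immediately
\[
\lim_{n\to\infty}\frac{s^\nu_{n,0}+s^\nu_{n,r_n}}{2nd} = \frac{s_{\min}+s_{\max}}{2d}.
\]
Since both terms inside the bracket converge, and to the same value, their difference converges to zero, which is exactly the assertion of the corollary. To be fully rigorous I would invoke the elementary fact that if two sequences each converge, the limit of their difference is the difference of the limits.

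The one point that requires slight care, rather than being entirely routine, is justifying the passage from weak convergence of $\eta_n$ to convergence of the integrals $\int_\Lambda \ta\,d\eta_n$. The integrand $\ta\mapsto\ta$ is unbounded on all of $\R$, so one cannot apply weak convergence blindly; the key observation is that all the measures $\eta_n$ and the limit measure are supported on the fixed compact interval $\Lambda$, so we may replace the identity function by a continuous compactly supported function agreeing with it on $\Lambda$, and then weak convergence applies directly. Strictly speaking the support containment $n^{-1}S^\nu_n\subseteq\Lambda$ is built into the definition of $\Lambda$ as a convex hull, so no extra argument is needed there; the main obstacle is simply this bookkeeping about compact support, after which the result is a formal consequence of the two stated limits and Theorem~\ref{thm:equilocal}.
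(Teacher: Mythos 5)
Your proof is correct and follows essentially the same route as the paper: the paper likewise derives the corollary by combining the Fekete-lemma limits for $\frac{1}{n}s^\nu_{n,0}$ and $\frac{1}{n}s^\nu_{n,r_n}$ (giving the limit $\frac{s_{\min}+s_{\max}}{2d} = \frac{1}{d}\int_\Lambda \ta\,d\ta$ for the first term) with the weak convergence $\eta_n \to d\ta$ of Theorem~\ref{thm:equilocal} applied to the identity function (giving the same limit for the second term). Your extra remark on replacing the identity by a compactly supported continuous function is a correct piece of bookkeeping that the paper leaves implicit, since all $\eta_n$ are supported in the compact interval $\Lambda$ by its definition as a convex hull.
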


\begin{remark}\label{rem:positive}\rm
 Consider the case  $\mathrm{char}(\k)>0$. The equidistribution theorem for Weierstrass points 
 would then follow, by the proof in Section~\ref{sec:proof}, 
 if we could guarantee the following analogue of Corollary~\ref{cor:sminmax}. Notations as in 
 Theorem~\ref{thm:SpW2}, let $t$ be a tame parameter at 
 $x$ such that $t-a_\nu$ reduces to a local parameter at any point 
 $x^\nu \in \C_x$, for $\nu\in \T_x(\Gamma)$ and some
 $a_\nu\in \k$, then 
 \[\frac 1{(r_n+1)^2}\,\ord_{x^{\nu}}\mathrm{Wr}_{\mathcal F_n,t}\,\, 
 \longrightarrow \,\,\frac {s^\nu_{\min} + 
 s^\nu_{\max}}{2d} -\frac 12\,,\]
 where $\mathcal F_n \subset \K(X)$ is an arbitrary basis of $H^0\bigl(\,X, \mathcal O(n\mathcal D)\,\bigr)$.
 
\end{remark}

 \section{Proof of Theorem~\ref{thm:main}}\label{sec:proof}
In this section, we use the materials of the previous sections to prove Theorem~\ref{thm:main}. 
So let $\mathcal D$ be a divisor of positive degree on a smooth proper connected curve $X$ of positive 
genus $g$ over 
$\K$, and denote by $\mathcal W_n$ the Weierstrass divisor of $\mathcal O(n\mathcal D)$ on $X$. 
Let $D =\tau_*(\mathcal D)$, and $W_n = \tau_*(\mathcal W_n)$ be 
the reductions  on $\Gamma$. Denote by $r_n$ the rank of $n\mathcal D$, 
and note that $W_n $ and $ \mathcal W_n$ are of
degree $g(r_n+1)^2$. 

We suppose that $\mathrm{char}(\k)=0$ (however see Remark~\ref{rem:positive}). Let $\Gamma$ be a skeleton of $X^{\an}$ with a simple graph model $G=(V,E)$ which contains all the points 
in the support of $D$ as vertices, and consider 
the limit linear series $(nD, \mathfrak S_n)$ on $\Gamma$ induced from the linear series
$(n\mathcal D, H_n)$ on $X$, with $H_n= H^0(X, \mathcal O(nD))$. 

\medskip

Let $D = \sum_{x\in \Gamma} d_x (x)$. By Theorem~\ref{thm:SpW}, we have 
$W_n = \sum_{x\in \Gamma} c_x (x),$ where 
 \[c_x = (r_n+1)nd_x + \frac{r_n(r_n+1)}{2}(2g_x-2+\val(x)) - \sum_{\nu\in \T_x(\Gamma)} 
 \sum_{i=0}^{r_n} s^{\nu}_{n,i}.\]
 Here, as in the previous section,
 $s^\nu_{n,0}<\dots<s^\nu_{n,r_n}$ are the set of all the integers in $S^\nu_n$.
\medskip

We have to show that the sequence of discrete measures 
$\frac{1}{g(r_n+1)^2} \delta_{W_n}$ converge weakly to the canonical admissible measure 
$\mu_{\ad}$. 

Recall that $K_g = \sum_{x\in \Gamma}g_x(x)$, and $K_\Gamma = \sum_{x\in \Gamma} (\val(x)-2)(x)$. 
For any $n$ and $x\in \Gamma$, let
\[p_{n,x} := \sum_{\nu\in \T_x(\Gamma)}\sum_{i=0}^{r_n}s^\nu_{n,i}\,\,\,\,,\,\,\textrm{and}\,\,\,\,\, 
P := \sum_{x\in \Gamma} p_{n,x}(x)\,.\]  

So we can rewrite
\[W_n = (r_n+1)nD + r_n(r_n+1) K_g + \frac{r_n(r_n+1)}{2} 
K_\Gamma - P_n\,,\,\,\textrm{and}\]
\[\frac{1}{g(r_n+1)^2}\delta_{W_n} = \frac n{g(r_n+1)} \delta_{D} +
\frac{r_n}{g(r_n+1)} \delta_{K_g}+ 
\frac{r_n}{2g(r_n+1)}\delta_{K_\Gamma} - \frac{1}{g(r_n+1)^2} \delta_{P_n}\,.\]

\medskip

Certain terms in the right hand side of the above equation have simple asymptotic: 

\begin{prop} We have as $n$ goes to infinity
 \[\frac n{g(r_n+1)}\delta_{D} \longrightarrow \frac1{gd}\delta_D\,,\quad 
  \frac{r_n}{g(r_n+1)} \delta_{K_g} \longrightarrow \frac 1g \delta_{K_g}\,,\,\textrm{and}\quad 
  \frac{r_n}{2g(r_n+1)}\delta_{K_\Gamma} \longrightarrow \frac 1{2g}  \delta_{K_\Gamma}\,.
\]
\end{prop}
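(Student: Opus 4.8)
The plan is to recall the fact, established in Section~\ref{sec:equilocal}, that the rank $r_n = h^0(X,\mathcal O(n\mathcal D))-1$ grows linearly in $n$. Indeed, by Riemann--Roch on $X$, for $n$ large enough we have $h^0(X, n\mathcal D) = nd - g + 1$, so that $r_n + 1 = nd - g + 1$ and in particular $r_n + 1 \sim nd$ as $n\to\infty$. This single asymptotic estimate is what drives all three limits: each of the three coefficients in question is a rational function of $n$ (through $r_n$) multiplying a fixed discrete measure, and I simply need to evaluate the limit of the scalar prefactor.

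First I would treat the first term. Its prefactor is $\frac{n}{g(r_n+1)}$. Substituting $r_n + 1 = nd - g + 1$, this equals $\frac{n}{g(nd-g+1)}$, which visibly tends to $\frac{1}{gd}$ as $n\to\infty$. Since $\delta_D$ is a fixed finite measure independent of $n$, weak convergence of $\frac{n}{g(r_n+1)}\delta_D$ to $\frac{1}{gd}\delta_D$ is immediate: for any continuous test function $\phi$ on $\Gamma$, the pairing $\langle \frac{n}{g(r_n+1)}\delta_D, \phi\rangle = \frac{n}{g(r_n+1)}\sum_{x}D(x)\phi(x)$ converges to $\frac{1}{gd}\sum_x D(x)\phi(x)$ by the scalar limit. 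The second and third terms are handled identically: the prefactor $\frac{r_n}{g(r_n+1)}$ tends to $\frac 1g$ and $\frac{r_n}{2g(r_n+1)}$ tends to $\frac{1}{2g}$, again using $r_n+1 = nd-g+1\to\infty$, so that $\frac{r_n}{r_n+1}\to 1$; multiplying by the fixed measures $\delta_{K_g}$ and $\delta_{K_\Gamma}$ and pairing against an arbitrary $\phi$ gives the claimed weak limits.

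In fact, for these scalar-times-fixed-measure statements, weak convergence is equivalent to convergence of the scalar prefactors, since the measures $\delta_D$, $\delta_{K_g}$, and $\delta_{K_\Gamma}$ are all supported on the finite vertex set and do not vary with $n$. There is no genuine obstacle here: the only ingredient beyond elementary limits of rational expressions in $n$ is the eventual equality $r_n+1 = nd-g+1$, which is exactly the stabilization of $h^0$ predicted by Riemann--Roch once $n$ is large, a fact already invoked in Section~\ref{sec:equilocal} when computing $\mathrm{vol}(\Lambda) = d$. The genuinely delicate asymptotics of the problem live entirely in the remaining term $\frac{1}{g(r_n+1)^2}\delta_{P_n}$, whose analysis requires the local equidistribution Theorem~\ref{thm:equilocal} and Corollary~\ref{cor:sminmax}; the present proposition is merely the easy bookkeeping that isolates those four terms so that the hard term can be attacked separately.
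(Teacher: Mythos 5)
Your proof is correct and matches the paper's reasoning: the paper states this proposition without a separate proof, justifying it only by the observation that $r_n = nd - g \sim nd$ for large $n$ (Riemann--Roch), which is exactly the scalar-prefactor argument you give. Your added remark that weak convergence reduces to convergence of the prefactors, since $\delta_D$, $\delta_{K_g}$, $\delta_{K_\Gamma}$ are fixed finite discrete measures, is precisely the bookkeeping the paper leaves implicit.
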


Comparing with the explicit form of $\mu_{\ad}$, and observing that 
$r_n = nd-g \sim nd$ for large enough $n$, we see that  

\begin{prop} Notations as above, the following assertions are equivalent:
\begin{equation*}
(*)\qquad \frac 1{g(r_n+1)^2} \delta_{W_n} \longrightarrow \mu_{\ad}, \quad \textrm{weakly}\,.
\end{equation*}

\begin{equation*}
(\star)\qquad \frac 1d\delta_D+\frac 12\delta_{K_\Gamma} - \frac 1{r_n nd} \delta_{P_n} \longrightarrow 
\sum_{e\in E(G)} \frac{1}{\ell_e+\rho_e} d\ta, \quad \textrm{weakly}\,. 
\end{equation*}
\end{prop}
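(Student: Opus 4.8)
The plan is to substitute the decomposition of $\frac{1}{g(r_n+1)^2}\delta_{W_n}$ displayed just above directly into $(*)$ and see what survives once the three elementary terms are accounted for. By the previous proposition the first three summands in
\[
\frac{1}{g(r_n+1)^2}\delta_{W_n} = \frac n{g(r_n+1)}\delta_{D} + \frac{r_n}{g(r_n+1)}\delta_{K_g} + \frac{r_n}{2g(r_n+1)}\delta_{K_\Gamma} - \frac{1}{g(r_n+1)^2}\delta_{P_n}
\]
converge weakly to $\frac1{gd}\delta_D + \frac1g\delta_{K_g} + \frac1{2g}\delta_{K_\Gamma}$. Since weak limits are additive, $(*)$ is equivalent to the statement that $\frac{1}{g(r_n+1)^2}\delta_{P_n}$ converges weakly to $\frac1{gd}\delta_D + \frac1g\delta_{K_g} + \frac1{2g}\delta_{K_\Gamma} - \mu_{\ad}$.

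Next I would insert the explicit form $\mu_{\ad} = \frac1g\delta_{K_g} + \frac1g\sum_{e\in E}\frac1{\ell_e+\rho_e}\,d\ta$. The two copies of $\frac1g\delta_{K_g}$ cancel, and after multiplying through by the constant $g$, the reduced statement becomes
\[
\frac1d\delta_D + \frac12\delta_{K_\Gamma} - \frac{1}{(r_n+1)^2}\delta_{P_n} \longrightarrow \sum_{e\in E}\frac1{\ell_e+\rho_e}\,d\ta, \qquad \textrm{weakly}.
\]
This is formally identical to $(\star)$ except that $\delta_{P_n}$ is normalized by $\frac1{(r_n+1)^2}$ rather than by $\frac1{r_n nd}$, so the whole content of the equivalence is that these two normalizations yield the same weak limit.

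The only genuinely non-formal step is therefore to prove $\bigl(\frac1{(r_n+1)^2} - \frac1{r_n nd}\bigr)\delta_{P_n} \to 0$ weakly. Using $r_n = nd-g$, a direct estimate gives $\bigl|\frac1{(r_n+1)^2} - \frac1{r_n nd}\bigr| = O(n^{-3})$, the two denominators agreeing to leading order $(nd)^2$. For the opposing bound I would control the total variation of $P_n$: from $P_n = (r_n+1)nD + r_n(r_n+1)K_g + \frac{r_n(r_n+1)}2 K_\Gamma - W_n$, together with the facts that $D,K_g,K_\Gamma$ are fixed divisors of finite support and that $W_n$ is effective of degree $g(r_n+1)^2 = O(n^2)$, one gets $\sum_x |p_{n,x}| = O(n^2)$. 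Hence $\bigl|\int_\Gamma \phi\, d\delta_{P_n}\bigr| \le \|\phi\|_\infty\, O(n^2)$ for every continuous $\phi$, and the product with the $O(n^{-3})$ coefficient is $O(n^{-1})\to 0$, which closes the argument.

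The hard part is thus essentially the uniform $O(n^2)$ bound on the total variation of $P_n$; everything else is linearity of weak limits and the asymptotics $r_n\sim nd$. To make the bound fully transparent I would additionally note that $P_n$ is supported on the vertex set $V$: at an interior point $x$ of an edge $\{u,v\}$ the antisymmetry $s^{uv}_{n,i} + s^{vu}_{n,r_n-i}=0$ forces $p_{n,x} = \sum_i s^{uv}_{n,i} + \sum_i s^{vu}_{n,i} = 0$, so the sum defining the total variation runs over the finite set $V$ and the degree estimate on $W_n$ applies termwise.
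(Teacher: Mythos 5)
Your main argument is correct, and it is precisely the reasoning the paper compresses into the sentence ``Comparing with the explicit form of $\mu_{\ad}$, and observing that $r_n = nd-g \sim nd$'': substitute the decomposition of $\frac{1}{g(r_n+1)^2}\delta_{W_n}$, cancel $\frac 1g\delta_{K_g}$ against the explicit formula $\mu_{\ad}=\frac 1g\delta_{K_g}+\frac 1g\sum_{e}\frac{1}{\ell_e+\rho_e}d\ta$, and absorb the discrepancy between the normalizations $\frac{1}{(r_n+1)^2}$ and $\frac{1}{r_n nd}$ via $r_n=nd-g$ together with an $O(n^2)$ bound on the total variation of $P_n$. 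The quantitative core (coefficient difference $O(n^{-3})$, total variation $O(n^2)$, product $O(n^{-1})\to 0$) is sound.

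However, your closing ``transparency'' remark is false. The antisymmetry $s^{uv}_{n,i}+s^{vu}_{n,r_n-i}=0$ holds along edges $\{u,v\}$ of a simple graph model \emph{adapted to} the slope structure $\mathfrak S_n$, namely the refined model $G_n$ with vertex set $V_n\supseteq V$; it does not hold along edges of the fixed model $G$. At a point $x\in V_n\setminus V$ lying in the interior of an edge of $G$ one has $d_x=g_x=0$ and $\val(x)=2$, so Theorem~\ref{thm:SpW} gives $p_{n,x}=-W_n(x)\leq 0$, which is typically nonzero: such points are exactly the reductions of Weierstrass points landing in edge interiors, and by the very equidistribution theorem being proved they carry, asymptotically, essentially all of the mass. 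So $P_n$ is supported on $V_n$, not on $V$. Fortunately your bound never needed this claim: the identity $P_n=(r_n+1)nD+r_n(r_n+1)K_g+\frac{r_n(r_n+1)}{2}K_\Gamma-W_n$ already exhibits $P_n$ as a linear combination of finitely supported divisors, and the triangle inequality together with the effectivity of $W_n$ and $\deg W_n=g(r_n+1)^2$ gives $\sum_x|p_{n,x}|=O(n^2)$ with no information about where that support sits. With that remark deleted, the proof stands as written.
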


The advantage of ($\star$) is that (almost) everything is formulated in terms of the metric graph $\Gamma$, which makes things 
more flexible to work.

We choose for each $n$ a simple graph model
$G_n$ of $\Gamma$ with vertex set $V_n$, which we suppose to be a refinement of $G$, i.e., $V\subseteq V_n$, 
and which we suppose also to be a model for 
the limit linear series $(nD, \mathfrak S_n)$. 
 
 We will later explain how to easily get Theorem~\ref{thm:main} from 
the following particular case:
\begin{thm}\label{thm:reduction} Let $e = \{x,y\}$ be an edge of a simple graph model 
$G=(V,E)$ of $\Gamma$, for $x,y\in V$. 
Consider the characteristic function 
$1_e$ of $e$. Then
 \[\frac1d\int_{\Gamma}1_e\delta_D + \frac 12 \int_{\Gamma} 1_e \delta_{\mathcal K_\Gamma} - \lim_{n\rightarrow \infty} 
 \frac 1{r_n nd} \int_{\Gamma} 1_e \delta_{P_n} = \int_e \frac {1}{\ell_e+\rho_e}\,d\ta \,=\, 
 \frac{\ell_e}{\ell_e+\rho_e}\,.\]
\end{thm}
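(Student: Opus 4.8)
The plan is to evaluate each of the three terms in the left-hand side of Theorem~\ref{thm:reduction} and show that their combination collapses to $\frac{\ell_e}{\ell_e+\rho_e}$. Writing $e = \{x,y\}$, I first note that the two discrete terms are immediate: $\int_\Gamma 1_e \delta_D = d_x + d_y$ (picking up the coefficients of $D$ at the two endpoints, since $D$ is supported on vertices), and $\int_\Gamma 1_e \delta_{K_\Gamma} = (\val(x)-2) + (\val(y)-2)$. The entire difficulty is therefore concentrated in computing the limit of the third term, and the whole point is that the combinatorial contributions from the vertex data must cancel against pieces of $\lim_n \frac{1}{r_n n d}\int_\Gamma 1_e \delta_{P_n}$, leaving only the effective-resistance expression.

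The core of the argument is to understand $\frac{1}{r_n n d} p_{n,x}$ asymptotically. By definition $p_{n,x} = \sum_{\nu \in \T_x(\Gamma)}\sum_{i=0}^{r_n} s^\nu_{n,i}$, and Corollary~\ref{cor:sminmax} tells us that $\frac{1}{n d(r_n+1)}\sum_{i} s^\nu_{n,i}$ has the same limit as $\frac{s^\nu_{n,0}+s^\nu_{n,r_n}}{2nd}$. Thus the bulk average of the slopes along each tangent direction is controlled by the \emph{extreme} slopes $s^\nu_{\min}$ and $s^\nu_{\max}$, and since $s^\nu_{\max} = s^\nu_{\min} + d$, only the minimum slope $s^\nu_{\min}$ carries nontrivial geometric information. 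The plan is therefore to replace $\frac{1}{r_n n d}\int_\Gamma 1_e \delta_{P_n}$ by $\frac{1}{d}\sum_{z\in\{x,y\}}\sum_{\nu\in\T_z(\Gamma)}\bigl(s^{\nu}_{\min}(z) + \tfrac{d}{2}\bigr)$ in the limit, using Corollary~\ref{cor:sminmax} direction-by-direction. The $\frac{d}{2}$ terms, summed over all tangent directions at $x$ and $y$, contribute $\frac{\val(x)+\val(y)}{2}$, which is exactly what is needed to interact with the $K_\Gamma$ term.

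The remaining, genuinely hard step is to analyze $\sum_{z,\nu} s^\nu_{\min}(z)$ and show that precisely the portion coming from the two tangent directions \emph{along} the edge $e$ itself produces $\frac{\ell_e}{\ell_e+\rho_e}$, while the contributions from all other tangent directions at $x$ and $y$ cancel against the divisor term $d_x + d_y$ and the rest of the combinatorial data. The mechanism is the variation of the minimum slope $s_{\min}$ along the edge: as one moves along $e$ from $x$ to $y$, the function $-\log|f_{\min,n}|$ (the reduction of a section realizing the minimal slope) is piecewise affine, and the inequality~\eqref{lem:redcoeffineq} governs how its slope can change. I would use the reduced divisor machinery of Proposition~\ref{prop:coef}, applied to the limit $\g^{r_n}_{nd}$, to pin down the minimal slopes $s^\nu_{n,0}$ at the two endpoints of $e$ in terms of the effective resistance $\rho_e$ of $\Gamma\setminus e$; the resistance enters because the minimal-slope section is essentially the one harmonic off the endpoints of $e$, and the ratio $\frac{\ell_e}{\ell_e+\rho_e}$ is exactly the proportion of unit current through $e$ in the network $\Gamma$.

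The main obstacle I anticipate is this last identification of $\lim_n \frac1{nd}s^\nu_{n,0}$ with the resistance-theoretic quantity. Proving that the minimum-slope sections behave asymptotically like solutions of a discrete Laplace/current problem on $\Gamma\setminus e$ — and that the limit genuinely converges to $s^\nu_{\min}$ rather than merely being bounded by it — requires a careful analysis of how the limit linear series $(nD,\mathfrak S_n)$ constrains slopes along $e$ as $n\to\infty$, combined with the superadditivity that gives the Fekete limit in Corollary~\ref{cor:sminmax}. Once the edge contribution is isolated as $\frac{\ell_e}{\ell_e+\rho_e}$, the verification that all other terms cancel is a matter of bookkeeping using $s^\nu_{\max}=s^\nu_{\min}+d$ and the balancing of divisor degrees, so I expect that part to be routine by comparison.
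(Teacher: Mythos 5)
Your reduction of the theorem to a single limit statement about minimal slopes is correct and matches the paper's own bookkeeping: the evaluation of the $\delta_D$ and $\delta_{K_\Gamma}$ terms, the use of Corollary~\ref{cor:sminmax} to replace the slope sums $p_{n,z}$ by the extreme slopes $s^\nu_{n,0}$, $s^\nu_{n,r_n}$, and the cancellation of the off-edge tangent directions against $d_x+d_y$ via Proposition~\ref{prop:coef} and Inequality~\eqref{lem:redcoeffineq} (which force $\lim_n \frac 1{nd}\bigl(nd_u-\sum_{\nu\in\T_u(\Gamma)}s^\nu_{n,0}\bigr)=1$) are exactly the steps by which the paper arrives at its key identity~\eqref{eq:pr5},
\[
1+\lim_{n\rightarrow\infty}\frac 1{nd}\Bigl(s^{\nu_y}_{n,0}+s^{\nu_x}_{n,0}\Bigr)=\frac{\ell_e}{\ell_e+\rho_e}\,.
\]

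But at this point your proposal stops: you yourself flag the identification of $\lim_n\frac1{nd}s^\nu_{n,0}$ with a resistance-theoretic quantity as the ``main obstacle'' requiring ``a careful analysis,'' and that analysis is never supplied. This is precisely the mathematical core of the theorem, and it is not a routine harmonicity statement about a single minimal-slope section, as your sketch suggests. The paper's mechanism is different and more specific: take the rational functions $f_{n,x}$, $f_{n,y}$ on $\Gamma$ producing the $x$- and $y$-reduced divisors of $(nD,\mathfrak S_n)$, set $f_n=f_{n,y}-f_{n,x}$, and restrict $f_n$ to $\Gamma\setminus e$. Proposition~\ref{prop:coef}, effectivity, and $r_n=nd-g$ force the Laplacian of this restriction to equal $a_y\delta_y-a_x\delta_x$ plus an error of total mass $O(g)$, with $a_x,a_y=nd+t_n+O(g)$, where $t_n=s^{\nu_x}_{n,0}+s^{\nu_y}_{n,0}$. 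When $|nd+t_n|\rightarrow\infty$, dividing by $nd+t_n$ and passing to the limit produces a function $F$ on $\Gamma\setminus e$ with $\Delta F=\delta_y-\delta_x$, whence $F(x)-F(y)=\rho_e$; on the other hand the slope of $f_n$ along $e$ is pinned within $g$ of $-t_n$, so $F_n(x)-F_n(y)=-t_n\ell_e+O(g\ell_e)$, and equating the two asymptotics gives $t_n/nd\rightarrow-\rho_e/(\ell_e+\rho_e)$, which is~\eqref{eq:pr5}. Moreover there is a degenerate case your outline does not foresee: if $nd+t_n$ stays bounded along a subsequence, this normalization is impossible, and one must instead show that $e$ is a bridge, so that $\rho_e=\infty$ and both sides of~\eqref{eq:pr5} vanish (using the symmetry $\Lambda^{\nu_y}=-\Lambda^{\nu_x}$ of the Okounkov bodies). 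So while your skeleton reproduces the paper's reduction, the proof of the key limit --- and the case split it requires --- is missing.
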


\medskip
Note that 
\begin{equation}\label{eq:pr0}
 \frac 12 \int_\Gamma 1_e \delta_{\mathcal K_\Gamma} = 
\frac 12\,\bigl(\,\val(x)+\val(y)\,\bigr) -2\,\,\,\,\,\,\textrm{and}\,\,\,\,\,\,
\frac1d\int_{\Gamma}1_e\delta_D=\frac1d\Bigl(d_x+d_y\Bigr)\,.
\end{equation}

\medskip

We now make the last integral in the left hand side of the above equation  more explicit. Fix an integer 
$n$, and let $u_0=x, u_1, \dots, u_l, u_{l+1}=y$ be the ordered set of all 
vertices of $G_n$ between $x$ and $y$ in $V_n$ on the segment $[x,y]$ of $\Gamma$; thus,  
$u_0u_1u_2\dots u_{l+1}$  is a path in $G_n$. 

Since $G_n$ is a model for the limit linear series $(nD, \mathfrak S_n)$, 
for any point $z\in [x,y]$ different from $u_i$ 
we have $p_{n,z} = 0$ (indeed, $S_{n,z}^\nu = -S_{n,z}^{\bar \nu}$ where 
$\T_z(\Gamma)=\{\nu,\bar\nu\}$). 
Thus the restriction of the divisor 
$P_n$ to the interval $[x,y]$ has support in $\{u_0,u_1,\dots, u_l,u_{l+1}\}$.

This shows that 
\begin{equation}\label{eq:pr1}
 \int_\Gamma 1_e \,\delta_{P_n} = \sum_{z\in V_n \cap [x,y]}\,p_{n,z}\,.
\end{equation}

Furthermore, 
for each edge $\{u,v\}$ of $E(G_n)$ with extremities $u,v\in \{u_0,u_1,\dots,u_l,u_{l+1}\}$, we have 
\[\sum_{s\in S^{uv}_n} s = - \sum_{s\in S_n^{vu}} s\,.\]
Thus, denoting by $\nu_y\in \T_x(\Gamma)$ (resp. $\nu_x\in \T_y(\Gamma)$) 
the tangent direction parallel to $xy$ (resp. $yx$),
we get 
\begin{equation}\label{eq:pr2}
 \sum_{z\in V_n\cap[x,y]} p_{n,z} = q^{\nu_y}_{n,x}  +  q^{\nu_x}_{n,y}\,,
\end{equation}
where for any point $z$ of $\Gamma$, and any tangent direction $\nu$ 
to $\Gamma$ at $z$, we denote by $q_{n,z}^\nu$ the following sum
\[q_{n,z}^\nu := p_{n,z} - \sum_{i=0}^{r_n}s_{n,i}^{\nu}\,.\]

\medskip

For each $\nu \in \T_x(\Gamma) \setminus \{\nu_y\}$, denote, as before, by $x^\nu$ 
the corresponding point
on the curve $\C_x$ associated to $\widetilde{\mathcal H_u}$. 
By Corollary~\ref{cor:sminmax} 
to the 
local equidistribution theorem~\ref{thm:equilocal}, we get 
\begin{equation}\label{eq:pr3}
\lim_{n\rightarrow \infty}\frac 1{r_nnd}\sum_{s\in S_{n}^\nu}s \,=\, 
\lim_{n\rightarrow \infty} \frac{s_{n,0}^\nu+s_{n,r_n}^{\nu}}{2nd}\,=\, \frac 12 + 
\lim_{n\rightarrow \infty} \frac {s^\nu_{n,0}}{nd}\,.
\end{equation}
In the last equality in the above equation, we used the fact that 
\[\lim_{n\rightarrow \infty} \frac{s^\nu_{n,r_n} - s^\nu_{n,0}}{n} = \mathrm{vol}(\Lambda^\nu) = d,\]
where as in  Section~\ref{sec:equilocal},
$\Lambda^\nu$ is the Okounkov body associated to the point $x^\nu$ of $\C_x$. 
\medskip

Combining Equations~\eqref{eq:pr0}, \eqref{eq:pr1}, \eqref{eq:pr2}, \eqref{eq:pr3}, 
and noting that 
$\T_x(\Gamma)$ and $\T_y(\Gamma)$ are of order $\val(x)$ and $\val(y)$, respectively, we see that 
in order to prove Theorem~\ref{thm:reduction}, it will be enough to show that 
\begin{equation}\label{eq:red2}
\frac {d_x+d_y}d-1 - \lim_{n\rightarrow \infty}\frac{1}{nd}\,
\Bigl[\sum_{\nu\in \T_u(\Gamma)\setminus \{\nu_y\}} s^\nu_{n,0} +
\sum_{\nu\in \T_v(\Gamma)\setminus\{\nu_x\}}s^\nu_{n,0} \Bigr] = \frac{\ell_e}{\ell_e+\rho_e}.
\end{equation}

Furthermore, for any point $u$ of $\Gamma$, by Proposition~\ref{prop:coef} and Inequality~\ref{lem:redcoeffineq}, 
the coefficient of the $u$-reduced divisor 
linearly equivalent to $nD$ with respect to $(nD, \mathfrak S_n)$ is $nd_u - \sum_{\nu\in \T_u(\Gamma)} s^\nu_{n,0}$ which is at least
$r_n$ and at most $dn$. Thus,
\begin{equation}\label{eq:pr4}
\lim_{n \rightarrow \infty}  \frac 1{nd}\,\Bigl(\,nd_u -\sum_{\nu\in \T_u(\Gamma)} s^\nu_{n,0}\,\Bigr) 
 = 1,
\end{equation}
which finally reduces Equation~\eqref{eq:red2} to showing that 
\begin{equation}\label{eq:pr5}
 1 + \lim_{n\rightarrow \infty}\frac{1}{nd}\,\Bigl(\,s^{\nu_y}_{n,0}\,+\, 
 s^{\nu_x}_{n,0}\,\Bigr) = \frac{\ell_e}{\ell_e+\rho_e}\,.
\end{equation}

\begin{proof}[Proof of Theorem~\ref{thm:reduction}]
We will prove \eqref{eq:pr5}, which by the above discussion proves the theorem. 

Denote by $D_{n,x}$ be the $x$-reduced divisor linearly equivalent to $nD$ with respect to 
$(nD, \mathfrak S_n)$, and denote by $f_{n,x} 
\in \mathrm{Rat}(D, \mathfrak S_n)$ the rational function with 
$D_{n,x} = nD + \div(f_{n,x})$. Similarly, let $f_{n,y} \in \mathrm{Rat}(D, \mathfrak S_n)$ be 
the rational function which gives 
the $y$-reduced divisor with respect to 
$(nD, \mathfrak S_n)$. Let $f_n := f_{n,y}-f_{n,x}$. We obviously have 
\[D_{n,x}+ \div(f_n) = D_{n,y}.\]

We consider the slopes of the above defined functions on the interval $[x,y]$. First note that by 
Proposition~\ref{prop:coef}, 
\begin{itemize}
 \item The slope of $f_{n,x}$ at $x$ along $\nu_y\in \T_x(\Gamma)$ is $s^{\nu_y}_{n,0}$. 
\end{itemize}
For large enough $n$, by Inequality~\ref{lem:redcoeffineq}, 
the coefficient of $x$ in $D_{n,x}$ is at least $r_n =nd-g$, which given that 
$D_{n,x}$ is effective, shows that the sum of the coefficients of $D_{n,x}$
at the points in the open interval 
$(x,y)$ is at most $g$. Since $nD$ does not have support in $(x,y)$, this implies that  
\begin{itemize}
 \item The slope of $f_{n,x}$ at $y$ along $\nu_x \in \T_y(\Gamma)$ is between 
 $-s^{\nu_y}_{n,0}$ and $g - s^{\nu_y}_{n,0}$.
\end{itemize}
(Recall that $s^{\nu_y}_{n,0}$ is the slope at $x$ along $\nu_y$.)

\medskip
The above reasoning applies to $f_{n,y}$ as well, which gives
\begin{itemize}
 \item The slope of $f_{n,y}$ at $y$ along $\nu_x\in \T_y(\Gamma)$ is $s^{\nu_x}_{n,0}$. 
 Furthermore, the slope of $f_{n,y}$  at $x$ along $\nu_y \in \T_x(\Gamma)$ is
 between $-s^{\nu_x}_{n,0}$ and $g - s^{\nu_x}_{n,0}$\,.
\end{itemize}

These observations together imply that 
\begin{claim}\label{claim} The slope of $f_n = f_{n,y} -f_{n,x}$ at $y$ 
along $\nu_x$ satisfies
\[s^{\nu_x}_{n,0}+s^{\nu_{y}}_{n,0} -g \,\leq\, 
 d_{\nu_x}f_n(y) \,\leq\, s^{\nu_x}_{n,0}+s^{\nu_{y}}_{n,0}\,.\]
Similarly, for the slope of $f_n$ at $x$ along $\nu_y$, we have 
\[-s^{\nu_y}_{n,0}-s^{\nu_x}_{n,0} \,\leq\, d_{\nu_y}f_n(x) \,\leq\, -s^{\nu_y}_{n,0}-
s^{\nu_x}_{n,0}
+g\,.\]
\end{claim}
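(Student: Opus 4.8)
The plan is to prove Claim~\ref{claim} by combining the two bullet-point observations that immediately precede it, which pin down the slopes of $f_{n,x}$ and $f_{n,y}$ at the two endpoints $x$ and $y$, and then to use the linearity of the slope operator under $f_n = f_{n,y} - f_{n,x}$. Concretely, at the point $y$ along $\nu_x$ I would write $d_{\nu_x}f_n(y) = d_{\nu_x}f_{n,y}(y) - d_{\nu_x}f_{n,x}(y)$, substitute the exact value $s^{\nu_x}_{n,0}$ for the first term (coming from Proposition~\ref{prop:coef} applied to the $y$-reduced divisor) and the two-sided bound $-s^{\nu_y}_{n,0} \le d_{\nu_x}f_{n,x}(y) \le g - s^{\nu_y}_{n,0}$ for the second, and then propagate the inequalities. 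Subtracting a quantity lying in $[-s^{\nu_y}_{n,0},\, g-s^{\nu_y}_{n,0}]$ from $s^{\nu_x}_{n,0}$ yields exactly
\[
s^{\nu_x}_{n,0} + s^{\nu_y}_{n,0} - g \;\le\; d_{\nu_x}f_n(y) \;\le\; s^{\nu_x}_{n,0} + s^{\nu_y}_{n,0},
\]
which is the first displayed inequality of the claim.

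For the second inequality I would argue symmetrically at the point $x$ along $\nu_y$, writing $d_{\nu_y}f_n(x) = d_{\nu_y}f_{n,y}(x) - d_{\nu_y}f_{n,x}(x)$. Here the roles are reversed: the term $d_{\nu_y}f_{n,x}(x)$ is exactly $s^{\nu_y}_{n,0}$ (the slope of the $x$-reduced function at its own base point $x$, again by Proposition~\ref{prop:coef}), while $d_{\nu_y}f_{n,y}(x)$ is the far-endpoint slope of the $y$-reduced function, which by the third bullet lies in $[-s^{\nu_x}_{n,0},\, g - s^{\nu_x}_{n,0}]$. Subtracting $s^{\nu_y}_{n,0}$ then gives
\[
-s^{\nu_x}_{n,0} - s^{\nu_y}_{n,0} \;\le\; d_{\nu_y}f_n(x) \;\le\; g - s^{\nu_x}_{n,0} - s^{\nu_y}_{n,0},
\]
which is the second displayed inequality.

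The substantive content is all already assembled in the bullets, so the proof itself is a short and essentially mechanical bookkeeping exercise; the genuine work lies in justifying those far-endpoint bounds, and the main thing I would want to double-check is the argument that the sum of coefficients of a reduced divisor in the open interval $(x,y)$ is at most $g$. This uses that $\val(x)$ in the coefficient of $x$ in $D_{n,x}$ is at least $r_n = nd - g$ by Inequality~\ref{lem:redcoeffineq}, that $D_{n,x}$ is effective of total degree $nd$, and that $nD$ has no support in the open interval $(x,y)$ since the model $G$ was chosen to contain $\mathrm{supp}(D)$ among its vertices; hence the mass of $D_{n,x}$ available away from $x$ is at most $nd - r_n = g$, and the slope can change by at most this along the path from $x$ to $y$. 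The only care required is to make sure the sign conventions in $\div_x(f) = -\sum_{\nu} \mathrm{slope}_\nu(f)$ are tracked correctly so that "slope decreases by the divisorial mass" comes out with the right sign; once that is fixed, both inequalities follow directly.
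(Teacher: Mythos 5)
Your proposal is correct and is essentially identical to the paper's argument: the paper derives the claim precisely by subtracting the endpoint slopes recorded in the preceding bullet points, using $d_{\nu_x}f_n(y) = d_{\nu_x}f_{n,y}(y) - d_{\nu_x}f_{n,x}(y)$ with the exact value $s^{\nu_x}_{n,0}$ from Proposition~\ref{prop:coef} and the two-sided far-endpoint bound, and symmetrically at $x$. Your reconstruction of why the far-endpoint slope can drift by at most $g$ (effectiveness of $D_{n,x}$ of degree $nd$, coefficient at least $r_n = nd-g$ at the base point by Inequality~\ref{lem:redcoeffineq}, and no support of $nD$ in the open interval) is also exactly the justification the paper gives for those bullets.
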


\medskip

Consider now the restriction $F_n$ of $f_n$ to the metric graph $\Gamma\setminus e$ obtained by 
removing the (interior of the) edge $e$ from $\Gamma$. 
Write
\[\Delta(F_n) = a_y \delta_y - a_x \delta_x + \sum_{z\in \Gamma\setminus [x,y]} a_z \delta_z\,. \]

Since $D_{n,x}+ \div(f_n) = D_{n,y}$, we get 
\[a_y = D_{n,y}(y) - D_{n,x}(y)+d_{\nu_x}(f_n)(y)\,.\]
We use one last time the inequalities
\[nd-g\leq D_{n,y}(y) \leq nd \qquad,\qquad nd-g\leq D_{n,x}(x) \leq nd\,,\]
and the fact that $D_{n,y}$ is effective of degree $dn$, to get $0\leq D_{n,y}(x) \leq g$. 
So all together, we have
\begin{equation}\label{ineq1}
 nd+s^{\nu_x}_{n,0} + s^{\nu_y}_{n,0}-3g \leq  a_{y} \leq  nd +s^{\nu_x}_{n,0} + s^{\nu_y}_{n,0}\,.
\end{equation}
Similarly, we have  $a_x=D_{n,x}(x)-D_{n,y}(x) -d_{\nu_y}(f_n)(x),$ and so 
\begin{equation}\label{ineq2}
 nd+s^{\nu_x}_{n,0} + s^{\nu_y}_{n,0}-3g \,\leq\,  a_{x} \,\leq\,  nd +s^{\nu_x}_{n,0} + 
 s^{\nu_y}_{n,0}\,.
\end{equation}
In addition, we have  
\begin{equation}\label{ineq3}
\textrm{for all $z\in \Gamma\setminus [x,y]$},\,\,\, -g \leq a_z \leq g\,,\,\, \,\,\textrm{and}
\end{equation} 
\begin{equation}\label{ineq4}
 -2g\,\leq\, \sum_{z\in \Gamma\setminus [x,y]} a_z\,\leq\, 2g\,.
\end{equation}

Define $t_n := s^{\nu_x}_{n,0} + s^{\nu_y}_{n,0}$.
We consider two cases depending on whether $\liminf_n\{|nd+t_n|\}$ is finite or not.

\subsubsection*{First case.} \emph{Suppose that $|nd+ t_n| \rightarrow \infty$ as $n$ goes to infinity.}
\medskip

We explain how to finish the proof of Theorem~\ref{thm:reduction}:

\noindent Combining Inequalities~\eqref{ineq1}, \eqref{ineq2}, \eqref{ineq3}, and \eqref{ineq4}, 
we see that 
$\frac 1{nd+ t_n}F_n$ satisfies 
\[\Delta(\frac {F_n}{nd+t_n}) = \delta_y -\delta_x + \alpha_n\,,\]
where $\alpha_n$ is a discrete measure supported on $\Gamma\setminus e$ 
with $\alpha_n \rightarrow 0$ as $n$ goes to infinity. 
Normalizing $F_n$ to have $F_n(x)=0$ if necessary, 
we conclude the pointwise convergence of  
\[\frac {F_n}{nd+t_n} \longrightarrow F\,,\]
where $F$ satisfies 
\[\Delta(F) =\delta_y -\delta_x\,.\]
It thus follows that 
\begin{equation}\label{eq:final}
 \frac 1{nd+t_n}\Bigl(F_n(x) -F_n(y)\Bigr) \longrightarrow F(x)-F(y) = \rho_e\,.
\end{equation}

To conclude, write
\[F_n(x) - F_n(y) = f_n(x)-f_n(y) = \int_y^x \frac d{d\ta}f_n(\ta)\,d\ta\,,\]
and note that, since $D_{n,x} + \div(f_n) =D_{n,y}$ is effective, and the sum of 
all the coefficients of the points on the interval $(x,y)$ is at most $g$, we have
\begin{equation}\label{eq:miracle}
 -t_n-g=s^{\nu_x}_{n,0} + s^{\nu_y}_{n,0} -g 
\leq \frac d{d\ta}f_n(x+\ta) \leq  s^{\nu_x}_{n,0} + s^{\nu_y}_{n,0}+g = - t_n +g,
\end{equation}
which gives 
$$ (-t_n-g)\ell_e\leq F_n(x)-F_n(y) \leq (-t_n+g)\ell_e.$$
Plugging this into Equation~\eqref{eq:final}, we get 
\[\frac{t_n}{nd+t_n} \longrightarrow \frac {-\rho_e}{\ell_e}\,,\]
which finally gives:
\[\lim_{n\rightarrow \infty} \frac 1{nd} \Bigl(s^{\nu_x}_{n,0}+s^{\nu_y}_{n,0}\Bigr) = \frac{-\rho_e}{\ell_e+\rho_e}. \]
Adding one to the both sides of the above equation gives~\eqref{eq:pr5}, and 
finishes the proof.

\medskip
\subsubsection*{Second case.} \emph{Suppose that $|n_id+ t_{n_i}| < C$ for a constant $C$ and a 
sequence $n_i \rightarrow \infty$.}
\medskip

In this case, the coefficients of $\Delta(F_{n_i})$ will be all bounded by a constant, for all $i$,
and the total (positive) degree 
of $\Delta(F_{n_i})$ is also bounded by $2C+g$. We will prove that $e$ is a bridge in $\Gamma$. 
For the sake of a contradiction, suppose this is not the case, i.e., $\Gamma \setminus e$ 
is connected. We can further assume that $F_n(x)=0$ for all $n$. 
By the compactness of $\Gamma$, it then follows that $F_{n_i}$ are all bounded functions on 
$\Gamma$. In other words, 
there exists $C'$ such that $F_{n_i}(z) < C'$ for all $z\in \Gamma \setminus e$. 
By Equation~\eqref{eq:miracle}, we find that the slope of $f_{n_i}$ at any point 
on the segment $(y,x)$ is between $-t_{n_i} -g$ and $-t_{n_i} +g$, which is at least 
$\frac {n_i d}2$, for large $i$. It follows that $f_{n_i}(y) -f_{n_i}(x) \rightarrow \infty$, 
as $i$ goes to infinity,
which is a contradiction. This shows that $e$ is a bridge edge in $\Gamma$. 

We conclude that in this case, the right hand side of~\eqref{eq:pr5} is zero (since $\rho_e =\infty$), 
and it is easy to check that the left hand side is also zero. Indeed, in this case, 
we will  have 
$\Lambda^{\nu_y} = - \Lambda^{\nu_x}$. Now, using our previous notation 
$\Lambda^\nu = [s^\nu_{\min},s^{\nu}_{\max}]$, with $s^{\nu}_{\max} =d + s^{\nu}_{\min}$, we get
\[1+\lim_{n \rightarrow \infty} \frac 1{nd}(s^{\nu_x}_{n,0}+s^{\nu_y}_{n,0}) = \frac 1d 
\Bigl(\,d+s^{\nu_x}_{\min}+
s^{\nu_y}_{\min}\,\Bigr) = s^{\nu_x}_{\max}+s^{\nu_y}_{\min} = 0.   \]
This finally finishes the proof of our theorem.

\end{proof}

\begin{proof}[Proof of Theorem~\ref{thm:main}] This follows as usual from Theorem~\ref{thm:reduction}.
 Let $f$ be a continuous function on $\Gamma$. Take a simple graph model $G$ 
 of $\Gamma$ where each edge has length at most $\epsilon$, and such that the support of $D$ is contained in $V(G)$.
 Choose a point $x_e$ in the interior of each edge $e$, and consider the function 
 $f_\epsilon=\sum_e f(x_e)1_e.$ By Theorem~\ref{thm:reduction}, we have 
 \[\lim_{n\rightarrow \infty} \int_\Gamma f_{\epsilon} d\mu_n = \int_\Gamma f_\epsilon d\mu_{\ad}.\]
 Since $||f_\epsilon -f||_{\infty} \leq \epsilon\,,$ we conclude that 
 for all sufficiently large $n$, we have 
 \[-3\epsilon\leq \int_\Gamma fd\mu_n - \int_\Gamma fd\mu_{\ad} \leq 3\epsilon,\]
 which proves the theorem.
\end{proof}

\begin{proof}[Proof of Corollary~\ref{cor:onecomponent}]
We can pass from $K$ to the completion $\K$ of an algebraic closure of $K$, 
and reformulate the statement in $X_{\K}^{\an}$ by looking at the type II point 
$x_0$ 
which corresponds to $\C_0$ on a fixed 
skeleton $\Gamma$. The assertion, that, the proportion of Weierstrass points which are mapped to 
$x_0$, under the retraction map $\tau$, goes to $g_0/g$ as $n\rightarrow \infty$,  
can be now deduced in a way similar  to 
the proof of Theorem~\ref{thm:reduction} for $e =\{x_0\}$, or, equivalently,  
by a direct argument using Theorem~\ref{thm:SpW},
Theorem~\ref{thm:equilocal} which allows to express the sum in Theorem~\ref{thm:SpW} 
in terms of the sum of minimum slopes $s_{0,n}^\nu$, $\nu\in \T_{x_0}(\Gamma)$,  
and Proposition~\ref{prop:coef} which provide an estimation for the sum of 
$s_{0,n}^\nu$ over $\nu \in \T_{x_0}(\Gamma)$. 
We omit the details.
\end{proof}

\end{document}